\documentclass{article}

\usepackage{amssymb,amsmath,amsthm}
\usepackage{graphicx}
\usepackage{proof}
\usepackage{cite}
\usepackage[UKenglish]{babel}
\usepackage[all]{xy} 
\usepackage{hyperref}
\usepackage{times}

\hypersetup{
   colorlinks,%
   citecolor=blue,%
   filecolor=black,%
   linkcolor=red,%
   urlcolor=black
 }

\newcommand{\lr}[1]{\langle #1 \rangle}
\newcommand{\lra}{\leftrightarrow}

\newcommand{\ML}{\ensuremath{\mathcal{L}(\Box)}}

\newcommand{\LEA}{\mathcal{L}_\bullet}

\newcommand{\ALA}{\mathbb{ALA}}

\newcommand{\BP}{\textbf{P}}

\newcommand{\M}{\ensuremath{\mathcal{M}}}
\newcommand{\N}{\ensuremath{\mathcal{N}}}

\renewcommand{\phi}{\varphi}

\newcommand{\weg}[1]{}

\newtheorem{theorem}{Theorem}
\newtheorem{lemma}[theorem]{Lemma}
\newtheorem{definition}[theorem]{Definition}

\newtheorem{proposition}[theorem]{Proposition}

\newtheorem{corollary}[theorem]{Corollary}
\newtheorem{fact}[theorem]{Fact}
\newtheorem{conjecture}[theorem]{Conjecture}

\begin{document}

\title{Bimodal logics with contingency and accident\thanks{This research is supported by the youth project 17CZX053 of National Social Science Fundation of China.}}
\author{Jie Fan\\
\small School of Philosophy, Beijing Normal University  \\
\small \texttt{fanjie@bnu.edu.cn}}
\date{Submitted on 05 Dec 2017\\
(Any comments or corrections are welcome!)}

\maketitle

\begin{abstract}
Contingency and accident are two important notions in philosophy and philosophical logic. Their meanings are so close that they are mixed sometimes, in both everyday discourse and academic research. This indicates that it is necessary to study them in a unified framework. However, there has been no logical research on them together. In this paper, we propose a language of a bimodal logic with these two concepts, investigate its model-theoretical properties such as expressivity and frame definability. We axiomatize this logic over various classes of frames, whose completeness proofs are shown with the help of a crucial schema. The interactions between contingency and accident can sharpen our understanding of both notions. Then we extend the logic to a dynamic case: public announcements. By finding the required reduction axioms, we obtain a complete axiomatization, which gives us a good application to Moore sentences.
\end{abstract}

\noindent Keywords: contingency, accident, axiomatizations, expressivity, frame definability, Moore sentences

\section{Introduction}

Recent years have witnessed a bunch of investigations on non-normal modalities, such as contingency/non-contingency, essence/accident. To say a formula is contingent, if it is possibly true and also possibly false; to say a formula is accidental, if it is true but possibly false. Contingency applies to propositions which have no exact truth value; for example, ``P=NP'', which is possibly true and possibly false. In contrast, accident applies to propositions that are true but possibly false; for example, ``John won the prize'' or ``He is in China''. Despite being definable with other modalities such as necessity, these two modalities formalize various important metaphysical and epistemological notions in their own rights.

The notion of contingency dates back to Aristotle, who \weg{differentiates three kinds of propositions: necessary, impossible, contingent, and}develops a logic of statements about contingency~\cite{Borgan67}. The logical research about this notion is initiated by Montgomery and Routley~\cite{MR66}, followed by Cresswell~\cite{Cresswell88}, Humberstone~\cite{Humberstone95}, Kuhn~\cite{DBLP:journals/ndjfl/Kuhn95}, Zolin~\cite{DBLP:journals/ndjfl/Zolin99}, and Fan, Wang and van Ditmarsch~\cite{Fanetal:2014}. This notion has many analogues in various setting; for example, it corresponds to borderline in a sorites setting, to undecidability in a proof-theoretic setting, to moral indifference in a deontic setting, to agnosticism in a doxastic setting, and to ignorance in an epistemic setting, etc..\footnote{In a recent paper~\cite{Fine:2017}, Fine shows that, in the context of S4 or KD4, knowledge of second order ignorance is impossible, in which `ignorance' means `ignorance whether'.} This means that the technical results on contingency also apply to those analogues. As for a recent study of contingency, we refer to~\cite{Fanetal:2015}.



As a variation of contingency, the notion of accident, or `accidental truths', goes back at least to Leibniz, in disguise of the term `v\'erit\'es de fait' (factual truths) (cf. e.g.~\cite{Heinemann:1948,AD:1989A}).\weg{\footnote{In {\em Monadology}, Leibniz wrote: ``The truths of reason are necessary and their opposite is impossible, the truths of fact are contingent and their opposite is possible.''(e.g.~\cite{Heinemann:1948,AD:1989A}) The notion of `truths of fact' seems quite suitable to be thought of as accidental truths, since it exactly means true and possibly false.}} 
This notion is used to reconstruct G\"odel's ontological argument (e.g.~\cite{Small:2001}), and relevant to the {\em future contingents} problem\weg{in that the latter states that anything true in the future is necessarily true, which has the form $\phi\to\Box\phi$, i.e.} formalized by a negative form of accident~\cite{Aristotle:1941}, and to provide an additional partial verification of the Boxdot Conjecture posed in \cite{FH:2009} (see~\cite{Steinsvold:2011}).

In an epistemic setting, accident is read `unknown truths', which is an important notion in philosophy and formal epistemology. For example, \weg{an unknown truth, formalized as $\varphi\wedge\neg K\varphi$, is a classical example used to refute the {\em knowability thesis} that every truth is knowable.\footnote{It is also called `verificationist thesis' or `verificationist principle' in e.g.~\cite{jfak.fitch:2004}, or `Verification Principle' in e.g.~\cite{Priest:2009}.}} it is a source of Fitch's `paradox of knowability'\weg{: if every truth is knowable, then every truth is known}~\cite{fitch:1963}. \weg{Therefore, all unknown truths are unknowable. }As another example, it is an important kind of Moore sentences, which is in turn essential to Moore's paradox\weg{ a certain class of statements about truths that are not believed or known, the {\em Moore sentences}, is essential to Moore's paradox, which says that one cannot consistently assert a sentence such as `it is raining but I do not know (believe) it is raining', even if the sentence is true }\cite{Moore:1942,hintikka:1962}. In the terminology of dynamic epistemic logic, such a Moore sentence is unsuccessful and self-refuting~\cite{hvdetal.del:2007,hollidayetal:2010,jfak.book:2011}.

To distinguish `accident' from `contingency', a minimal logic of accident is provided in \cite{Marcos:2005}.\weg{\footnote{In \cite{Marcos:2005}, essence is treated as a {\em modality}, which is quite different from the treatments in the literature (see e.g. \cite{Fine:essence}) as the essential properties, something essentially having a property, or something being the essence of something else.}} This axiomatization is then simplified and its various extensions are proposed in \cite{Steinsvold:2008}, which views the work on the logic of accident as a variation and continuation of the work done on contingency logic. Independently of the literature on the logic of accident, \cite{steinsvold:2008b} provides a topological semantics for {\em a logic of unknown truths} and shows its completeness over the class of $\mathcal{S}4$ models. As for a comprehensive treatment of accident logic, see~\cite{GilbertVenturi:2016}.


\medskip

The meanings of contingency and accident are so close that people mix the two notions from time to time in everyday discourse and academic research. For instance, Leibniz used the term `contingency' to mean what is essentially meant by `accident' (e.g.~\cite{Heinemann:1948,AD:1989A}). For another example, in Chinese, the same character has been used to express both notions. Besides, the relationship between the two notions is not clear from the literature. The interactions between contingency and accident may sharpen our understanding of these two concepts. Thus it is necessary to study them in a unified framework.

Despite so many separate investigations on the notions of contingency and of accident in the literature, there has been no logical research on them together. As one can imagine, once we study the two notions at once, the situation may become quite involved. For instance, one difficulty in axiomatizing the logic with contingency and accident as sole primitive modalities, is that we have only one accessibility relation to handle two modal operators uniformly, which makes it nontrivial to find desired interactive axioms of the two notions.



Beyond axiomatizing the logic of contingency and accident over various classes of frames, we also consider the dynamic extension, where contingency and accident operators are better understood as their epistemic counterparts, i.e. `ignorance (or equivalently, not knowing whether)' and `unknown truth', respectively. By applying the usual reduction method, we obtain a complete axiomatization for the dynamic extension of contingency and accident logic.

Our contributions consist of the following:
\begin{enumerate}
\item A schema {\bf NAD} saying that necessity is almost definable in terms of $\Delta$ and $\circ$ (Sec.~\ref{sec.lang-sema})
\item The logic $\mathcal{L}(\nabla,\bullet)$ of contingency and accident is less expressive than standard modal logic over non-reflexive model classes, but equally expressive over reflexive model classes (Sec.~\ref{sec.expre})
\item Transitivity is definable in $\mathcal{L}(\nabla,\bullet)$ with a complex formula (Sec.~\ref{sec.frame-defin})
\item Seriality, reflexivity, Euclideanity, convergency are all undefinable in $\mathcal{L}(\nabla,\bullet)$ by means of a notion of `mirror reduction' (Sec.~\ref{sec.frame-defin})
\item A minimal axiomatization of $\mathcal{L}(\nabla,\bullet)$, which also axiomatizes the class of serial frames (Sec.~\ref{sec.minimal-axiomatization}, Sec.~\ref{sec.serial})
\item An axiomatization of $\mathcal{L}(\nabla,\bullet)$ over transitive frames (Sec.~\ref{sec.transitive})
\item An axiomatization of $\mathcal{L}(\nabla,\bullet)$ over reflexive frames (Sec.~\ref{sec.reflexive})
\item A dynamic extension and one of its applications (Sec.~\ref{sec.dynamic})
\end{enumerate}

\section{Syntax and semantics}\label{sec.lang-sema}

Let $\BP$ be a fixed nonempty set of propositional variables. For the sake of presentation, we introduce a large language, which includes not only contingency operator $\nabla$ and accident operator $\bullet$, but also possibility operator $\Diamond$. But our main focus is the language with only $\nabla$ and $\bullet$ as primitive modalities.

\begin{definition} The language $\mathcal{L}(\nabla,\bullet,\Diamond)$ is generated by the following BNF:
$$\phi::=p\in\BP\mid \neg\phi\mid \phi\land\phi\mid \nabla\phi\mid \bullet \phi\mid \Diamond\phi$$
By disregarding the construct $\Diamond\phi$, we obtain the {\em language $\mathcal{L}(\nabla,\bullet)$ of contingency and accident logic}; by further disregarding the construct $\bullet\phi$ (resp. $\nabla\phi$), we obtain the language $\mathcal{L}(\nabla)$ of contingency logic (resp. $\mathcal{L}(\bullet)$ of accidental logic); by disregarding the constructs $\nabla\phi$ and $\bullet\phi$, we obtain the language $\mathcal{L}(\Diamond)$ of standard modal logic.
\end{definition}

Intuitively, $\nabla\phi$ means ``it is contingent that $\phi$'', $\bullet\phi$ means ``it is accident that $\phi$'', and $\Diamond\phi$ means ``it is possible that $\phi$''. Other connectives and operators are defined as usual; in particular, $\Delta\phi$, $\circ\phi$, $\Box\phi$ abbreviate $\neg\nabla\phi$, $\neg\bullet\phi$ and $\neg\Diamond\neg\phi$, respectively, read as ``it is non-contingent that $\phi$'', ``it is essential that $\phi$'', ``it is necessary that $\phi$''.

$\mathcal{L}(\nabla, \bullet,\Diamond)$ is interpreted over Kripke structures. A (Kripke) model for $\mathcal{L}(\nabla,\bullet,\Diamond)$ is a triple $\M=\lr{S,R,V}$, where $S$ is a nonempty set of possible worlds, $R$ is a binary relation over $S$, called `accessibility relation', and $V$ is a valuation map from $\BP$ to $\mathcal{P}(S)$.

\begin{definition}
Given a model $\M=\lr{S,R,V}$ and $s\in S$, the semantics of $\mathcal{L}(\nabla,\bullet,\Diamond)$ is defined inductively in the following.
\[
\begin{array}{|lll|}
\hline
\M,s\vDash p&\iff & s\in V(p)\\
\M,s\vDash\neg\phi&\iff&\M,s\nvDash\phi\\
\M,s\vDash\phi\land\psi&\iff&\M,s\vDash\phi\text{ and }\M,s\vDash\psi\\
\M,s\vDash\nabla\phi&\iff &\text{there are }t,u\in S\text{ such that }sRt,sRu\text{ and }\M,t\vDash\phi,\M,u\nvDash\phi\\
\M,s\vDash\bullet\phi&\iff &\M,s\vDash\phi\text{ and there exists }t\in S\text{ such that }sRt\text{ and }\M,t\nvDash\phi\\
\M,s\vDash\Diamond\phi&\iff &\text{there are }t\in S\text{ such that }sRt\text{ and }t\vDash\phi\\
\hline
\end{array}
\]
\end{definition}

One may easily compute the semantics of the defined modalities as follows:
\[
\begin{array}{lll}
\M,s\vDash\Delta\phi&\iff &\text{for any }t,u\in S\text{ such that }sRt,sRu,\text{ we have }(\M,t\vDash\phi\iff \M,u\vDash\phi)\\
\M,s\vDash\circ\phi&\iff &\text{if }\M,s\vDash\phi\text{ then for any }t\in S\text{ such that }sRt,\text{ we have }\M,t\vDash\phi\\
\M,s\vDash\Box\phi&\iff &\text{for any }t\in S\text{ such that }sRt\text{ we have }\M,t\vDash\phi\\
\end{array}
\]

\begin{fact}\label{fact.definable} The following results are immediate by the semantics:
\begin{enumerate}
\item[(i)] $\vDash\nabla\phi\lra(\Diamond\phi\land\Diamond\neg\phi)$
\item[(ii)] $\vDash\bullet\phi\lra(\phi\land\Diamond\neg\phi)$
\end{enumerate}
\end{fact}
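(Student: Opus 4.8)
The plan is simply to unfold the semantic clauses; both equivalences are a direct consequence of the definitions, with no induction required. Fix an arbitrary model $\M=\lr{S,R,V}$ and world $s\in S$.

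For (i), recall that $\M,s\vDash\nabla\phi$ holds iff there exist $t,u\in S$ with $sRt$, $sRu$, $\M,t\vDash\phi$ and $\M,u\nvDash\phi$. The key observation is that the two witnesses $t$ and $u$ are chosen independently of one another, so this statement is logically equivalent to the conjunction of ``there exists $t\in S$ with $sRt$ and $\M,t\vDash\phi$'' and ``there exists $u\in S$ with $sRu$ and $\M,u\nvDash\phi$''. By the clause for $\Diamond$, the first conjunct is precisely $\M,s\vDash\Diamond\phi$, and, using also the clause for $\neg$, the second conjunct is precisely $\M,s\vDash\Diamond\neg\phi$. Hence $\M,s\vDash\nabla\phi$ iff $\M,s\vDash\Diamond\phi\land\Diamond\neg\phi$, which establishes (i).

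For (ii), $\M,s\vDash\bullet\phi$ holds iff $\M,s\vDash\phi$ and there exists $t\in S$ with $sRt$ and $\M,t\nvDash\phi$; again by the clauses for $\Diamond$ and $\neg$, the second conjunct is $\M,s\vDash\Diamond\neg\phi$, so $\M,s\vDash\bullet\phi$ iff $\M,s\vDash\phi\land\Diamond\neg\phi$. Since $\M$ and $s$ were arbitrary, both validities follow. There is no real obstacle here; the only point worth making explicit is the independence of the two existential witnesses in the truth condition for $\nabla$, which is what licenses rewriting it as a conjunction of two $\Diamond$-statements.
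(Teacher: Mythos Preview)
Your proof is correct and matches the paper's treatment: the paper gives no proof at all, simply asserting that both equivalences are ``immediate by the semantics,'' which is exactly what your unfolding of the clauses establishes. The only thing you add beyond the paper is the explicit remark about the independence of the two existential witnesses in the $\nabla$ case, which is accurate and harmless.
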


As shown above, $\nabla$ and $\bullet$ are both definable in terms of $\Diamond$, thus $\mathcal{L}(\Diamond)$ is at least as expressive as $\mathcal{L}(\nabla,\bullet)$.

The following two formulas characterize the relationship between notions of contingency and accident. Intuitively, (1) says that if something is contingent, then either it or its negation is accident, (2) says that if it is accident that something implies anything, and it is also accident that its negation implies anything, then it is contingent. In fact, as we will see in Sec.~\ref{sec.minimal-axiomatization}, the two formulas constitute the desired `bridge axioms' in the minimal axiomatization of $\mathcal{L}(\nabla,\bullet)$.
\begin{proposition}\label{prop.bridge-axioms}\
\begin{enumerate}
\item[(1)] $\vDash\nabla\phi\to\bullet\phi\vee\bullet\neg\phi$
\item[(2)] $\vDash\bullet(\phi\to\psi)\land\bullet(\neg\phi\to\chi)\to\nabla\phi$
\end{enumerate}
\end{proposition}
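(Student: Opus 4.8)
The plan is to verify both validities by directly unwinding the semantics of $\bullet$ and $\nabla$ (equivalently, via Fact~\ref{fact.definable}, by reducing each to a $\Diamond$-tautology). Fix an arbitrary model $\M=\lr{S,R,V}$ and a world $s\in S$.

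For (1), I would \emph{assume} $\M,s\vDash\nabla\phi$ and produce witnesses $t,u\in S$ with $sRt$, $sRu$, $\M,t\vDash\phi$ and $\M,u\nvDash\phi$. The key move is a case split on the truth value of $\phi$ at $s$ itself, exactly the ``factive-like'' component in the clause for $\bullet$. If $\M,s\vDash\phi$, then $\M,u\nvDash\phi$ with $sRu$ already gives $\M,s\vDash\bullet\phi$; if instead $\M,s\nvDash\phi$, i.e.\ $\M,s\vDash\neg\phi$, then $\M,t\vDash\phi$ means $\M,t\nvDash\neg\phi$, and with $sRt$ this yields $\M,s\vDash\bullet\neg\phi$. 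Either way $\M,s\vDash\bullet\phi\vee\bullet\neg\phi$. (Alternatively: by Fact~\ref{fact.definable}, (1) is equivalent to $(\Diamond\phi\land\Diamond\neg\phi)\to((\phi\land\Diamond\neg\phi)\vee(\neg\phi\land\Diamond\phi))$, which is an immediate propositional consequence once one notes $\Diamond\neg\neg\phi\equiv\Diamond\phi$.)

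For (2), I would \emph{assume} $\M,s\vDash\bullet(\phi\to\psi)$ and $\M,s\vDash\bullet(\neg\phi\to\chi)$. By the semantics of $\bullet$, the first conjunct gives some $t$ with $sRt$ and $\M,t\nvDash\phi\to\psi$, hence in particular $\M,t\vDash\phi$; the second conjunct gives some $u$ with $sRu$ and $\M,u\nvDash\neg\phi\to\chi$, hence in particular $\M,u\vDash\neg\phi$, i.e.\ $\M,u\nvDash\phi$. These $t,u$ are precisely the witnesses required for $\M,s\vDash\nabla\phi$. Note that only the ``possibility of falsehood'' half of each $\bullet$-formula is used — the prejacents $\phi\to\psi$ and $\neg\phi\to\chi$ being true at $s$ play no role — and that the argument is the monotonicity fact $\Diamond(\phi\land\neg\psi)\vDash\Diamond\phi$ in disguise.

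There is essentially no obstacle here: both parts are routine truth-condition chases, and the only point worth flagging explicitly is the case distinction on $\M,s\vDash\phi$ in part (1), which is what forces the accident operator rather than a plain possibility operator to appear on the right-hand side. I would simply present the two arguments above as the full proof.
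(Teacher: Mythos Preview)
Your proof is correct. The paper states this proposition without proof, presumably regarding it as a routine semantic check; your direct unwinding of the truth conditions for $\nabla$ and $\bullet$, with the case split on $\M,s\vDash\phi$ in part (1) and the extraction of the $\phi$-witness and $\neg\phi$-witness from the two $\bullet$-conjuncts in part (2), is exactly the natural verification and there is nothing to add.
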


On one hand, we can see the similarity between contingent and accident: if we replace $\bullet$ with $\nabla$, then the resulted formulas are also valid, since we have $\vDash\nabla\phi\lra\nabla\neg\phi$ and $\vDash\nabla(\phi\to\psi)\land\nabla(\neg\phi\to\chi)\to\nabla\phi$.\footnote{For the latter, consider its equivalence $\Delta\phi\to\Delta(\phi\to\psi)\vee\Delta(\neg\phi\to\chi)$.} On the other hand, we can also see the difference between the two notions: if we replace $\nabla$ in (2) with $\bullet$, then the obtained formula $\bullet(\phi\to\psi)\land\bullet(\neg\phi\to\chi)\to\bullet\phi$ is invalid, as one may easily verify, though its weaker version $\bullet(\phi\to\psi)\land\bullet(\neg\phi\to\chi)\to\bullet\phi\vee\bullet\neg\phi$ is indeed valid.

\medskip




By way of concluding this section, we propose a crucial schema. Recall that a so-called `almost definability' schema AD is proposed in~\cite{Fanetal:2014,Fanetal:2015}, i.e. $\nabla\psi\to(\Box\phi\lra\Delta\phi\land\Delta(\psi\to\phi))$, stating that necessity is almost definable in terms of $\Delta$, which helps find the desired canonical relation in the completeness proof in the cited papers. Since now we need also deal with the clause $\bullet\phi$, the schema AD is not enough. We thus need a new schema that combines $\nabla$ and $\bullet$, if any. Fortunately, we find out the following desired schema, dubbed `{\bf NAD}', which stands for ``{\bf N}ecessity is {\bf A}lmost {\bf D}efinable in terms of $\Delta$ and $\circ$'', to distinguish it from the schema AD. Note that there would appear to be no reason to obtain {\bf NAD} from AD.
$$\bullet\psi\to (\Box\phi\lra \Delta \phi\land\circ(\neg\psi\to\phi))~~~~~~{\bf (NAD)}$$
\begin{proposition}
{\bf (NAD)} is a validity in $\mathcal{L}(\nabla,\bullet)$.
\end{proposition}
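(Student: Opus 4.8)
The plan is to unfold the semantics of both sides of the biconditional at an arbitrary pointed model $\M,s$ and verify the two directions, exploiting the two pieces of information packed into $\bullet\psi$: that $\psi$ holds at $s$, and that $s$ has at least one successor $u_0$ with $\M,u_0\nvDash\psi$.

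First I would fix $\M=\lr{S,R,V}$ and $s\in S$ with $\M,s\vDash\bullet\psi$, so that $\M,s\vDash\psi$ and there is some $u_0$ with $sRu_0$ and $\M,u_0\vDash\neg\psi$. For the left-to-right implication, assume $\M,s\vDash\Box\phi$, i.e. every $R$-successor of $s$ satisfies $\phi$. Then $\Delta\phi$ holds at $s$ (if $s$ is a dead end this is vacuous, otherwise all successors agree on $\phi$ by all satisfying it), and every successor satisfies $\neg\psi\to\phi$ a fortiori, so $\circ(\neg\psi\to\phi)$ holds at $s$ regardless of its antecedent. Note this direction does not even use $\bullet\psi$.

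For the converse, assume $\M,s\vDash\Delta\phi\land\circ(\neg\psi\to\phi)$. Since $\M,s\vDash\psi$, the world $s$ falsifies $\neg\psi$, hence $\M,s\vDash\neg\psi\to\phi$ holds vacuously; so the antecedent of the $\circ$-clause is met and every successor of $s$ satisfies $\neg\psi\to\phi$. Applying this to the witness $u_0$ and using $\M,u_0\vDash\neg\psi$, we obtain $\M,u_0\vDash\phi$. Now for an arbitrary successor $t$ of $s$, $\Delta\phi$ at $s$ forces $\M,t\vDash\phi\iff\M,u_0\vDash\phi$, hence $\M,t\vDash\phi$; as $t$ was arbitrary, $\M,s\vDash\Box\phi$.

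The argument is routine once the semantics is spelled out, so I do not anticipate a genuine obstacle; the only conceptual point is recognizing the dual role of $\bullet\psi$ --- its truth-at-$s$ conjunct is what trivializes the antecedent of $\circ(\neg\psi\to\phi)$, while its possibility conjunct supplies the successor $u_0$ that serves as the reference point against which $\Delta\phi$ propagates $\phi$ to all other successors. The places to be careful are the vacuous/edge cases: dead-end worlds for the $\Delta\phi$ and $\Box\phi$ clauses, and the vacuous satisfaction both of the material implication $\neg\psi\to\phi$ at $s$ and of the $\circ$-clause in the forward direction.
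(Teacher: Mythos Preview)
Your proof is correct and follows essentially the same route as the paper: assume $\bullet\psi$ at $s$, note the forward direction is immediate, and for the converse use $\psi$ at $s$ to activate the $\circ$-clause, obtain $\phi$ at the $\neg\psi$-successor, and then propagate $\phi$ to all successors via $\Delta\phi$. Your write-up is in fact slightly more detailed than the paper's (which dismisses the forward direction as ``clear'' and does not dwell on edge cases), but the argument is the same.
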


\begin{proof}
Let $\M=\lr{S,R,V}$ be a model and $s\in S$. Suppose that $\M,s\vDash\bullet\psi$, to show $\M,s\vDash\Box\phi\lra\Delta\phi\land\circ(\neg\psi\to\phi)$. It should be clear that $\M,s\vDash\Box\phi\to\Delta\phi\land\circ(\neg\psi\to\phi)$. It suffices to show that $\M,s\vDash\Delta\phi\land\circ(\neg\psi\to\phi)\to\Box\phi$.

For this, assume that $\M,s\vDash\Delta\phi\land\circ(\neg\psi\to\phi)$. By supposition, we have $\M,s\vDash\psi$ and $\M,t\vDash\neg\psi$ for some $t$ with $sRt$. Then $\M,s\vDash\neg\psi\to\phi$, which combining with $\M,s\vDash\circ(\neg\psi\to\phi)$ and $sRt$ gives us $\M,t\vDash\neg\psi\to\phi$. Thus $\M,t\vDash\phi$. Since $s\vDash\Delta\phi$, it follows that for all $u$ such that $sRt$, we have $u\vDash\phi$, namely $\M,s\vDash\Box\phi$.
\end{proof}

This schema will guide us to define a suitable canonical relation in the completeness proofs below.

\section{Expressivity results}\label{sec.expre}

$\mathcal{L}(\nabla,\bullet)$ is more expressive than both $\mathcal{L}(\nabla)$ and $\mathcal{L}(\bullet)$ on the class of $\mathcal{K}$-models, $\mathcal{B}$-models, $4$-models, $5$-models (since $\mathcal{L}(\nabla)$ and $\mathcal{L}(\bullet)$ are incomparable on these model classes~\cite[Sec.~3.2]{Fan:2017}), but equally expressive as both logics on the class of $\mathcal{T}$-models (since $\mathcal{L}(\nabla)$ and $\mathcal{L}(\bullet)$ are equally expressive on the model class\cite[Sec.~3.3]{Fan:2017}). In the sequel, we compare the expressive powers of $\mathcal{L}(\nabla,\bullet)$ and $\mathcal{L}(\Diamond)$. As shown in Fact~\ref{fact.definable}, $\mathcal{L}(\Diamond)$ is at least as expressive as $\mathcal{L}(\nabla,\bullet)$ on any class of models.

\begin{proposition}\label{prop.lessexp-k}
$\mathcal{L}(\nabla,\bullet)$ is less expressive than $\mathcal{L}(\Diamond)$ on the class of $\mathcal{K}$-models, $\mathcal{B}$-models, $4$-models, $5$-models, but equally expressive as $\mathcal{L}(\Diamond)$ on the class of $\mathcal{T}$-models.
\end{proposition}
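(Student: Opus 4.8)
The plan is to establish the proposition in two independent parts: the \emph{strict} inequality over the four non-reflexive model classes $\mathcal{K}, \mathcal{B}, 4, 5$, and the \emph{equivalence} over $\mathcal{T}$-models. For the equal-expressivity claim over reflexive models, the key observation is that on reflexive frames the defined operators collapse: $\M,s\vDash\Box\phi$ iff $\M,s\vDash\phi\wedge\Delta\phi$ (since reflexivity gives $sRs$, so $\Delta\phi$ together with $\phi$ forces $\phi$ at every successor), and dually $\Diamond\phi\lra(\phi\vee\bullet\neg\phi)$ — more directly, $\Diamond\phi\lra\neg(\neg\phi\wedge\circ\neg\phi)$ reading $\circ$ with the reflexive clause. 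So I would give an explicit translation $t(\cdot)$ from $\mathcal{L}(\Diamond)$ into $\mathcal{L}(\nabla,\bullet)$ commuting with the Booleans and sending $\Diamond\phi$ to (the $\nabla,\bullet$-expression for) $\neg\circ\neg\phi\vee\phi$ or similar, prove $\M,s\vDash\phi\iff\M,s\vDash t(\phi)$ for all reflexive $\M$ by induction on $\phi$, and combine with Fact~\ref{fact.definable} for the converse direction to conclude equal expressivity on $\mathcal{T}$-models.

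For the strict inequalities, the standard technique is to exhibit, for each of the four frame classes, a pair of pointed models $(\M_1,s_1)$ and $(\M_2,s_2)$ in that class which agree on all $\mathcal{L}(\nabla,\bullet)$-formulas but disagree on some $\mathcal{L}(\Diamond)$-formula. The cleanest way to get the agreement half is a bisimulation-style argument tailored to $\nabla$ and $\bullet$: one defines the appropriate notion of $(\nabla,\bullet)$-bisimulation (the relevant back-and-forth conditions are those used for contingency/accident logic in the cited literature~\cite{Fan:2017,Fanetal:2015}), checks it is a congruence for $\mathcal{L}(\nabla,\bullet)$, and verifies the chosen pair of models is so related. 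The distinguishing $\mathcal{L}(\Diamond)$-formula can be as simple as $\Box\bot$ (i.e. $\neg\Diamond\top$): a world with no successors versus a world with successors all satisfying the same propositional type are $(\nabla,\bullet)$-indistinguishable — since $\nabla\phi$ and $\bullet\phi$ both require a \emph{witnessing} successor falsifying $\phi$, they behave trivially when all successors look alike — yet differ on $\Box\bot$. The only real work is to fit such examples simultaneously into each of $\mathcal{K}$ (trivial), $\mathcal{B}$ (symmetric), $4$ (transitive), $5$ (Euclidean) — e.g. a reflexive-free symmetric-or-Euclidean model built from a dead end attached appropriately, or two disjoint clusters.

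Concretely, for $\mathcal{K}$: take $\M_1$ a single irreflexive world $s_1$ (no arrows) and $\M_2$ two worlds $s_2 R t$, $t R$-nothing, with the same valuation everywhere; then $s_1$ and $s_2$ satisfy exactly the same $\mathcal{L}(\nabla,\bullet)$-formulas (both make every $\nabla\phi$ and $\bullet\phi$ false — at $s_1$ vacuously, at $s_2$ because its unique successor $t$ has no mismatch available and $\bullet\phi$ at $s_2$ would need a successor falsifying $\phi$, which exists only if the type at $t$ differs), but $\M_1,s_1\vDash\Box\bot$ while $\M_2,s_2\nvDash\Box\bot$. For $\mathcal{B}$, $4$, $5$ one closes these (or slight variants with a two-world cluster $\{t\}$ having $tRt$) under the required frame condition, checking the closure does not destroy the $(\nabla,\bullet)$-equivalence; the $5$ case needs a little care because Euclideanity forces a lot of structure, so there one may instead use two models that are each a single cluster of differing size but uniform valuation.

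The main obstacle I anticipate is the $5$-model (Euclidean) case: Euclideanity is a strong condition that tends to generate full clusters, making it delicate to build two Euclidean models that are $(\nabla,\bullet)$-equivalent at a point yet $\mathcal{L}(\Diamond)$-distinguishable, and one must be careful that the distinguishing formula is not accidentally $(\nabla,\bullet)$-definable on that restricted class (indeed Proposition~\ref{prop.lessexp-k} implicitly asserts it is not). I would handle it by choosing models whose distinguishing feature is the \emph{presence versus absence of successors} rather than anything about the internal structure of a cluster — a point with empty successor set is automatically Euclidean — so that $\Box\bot$ again does the separating while $\nabla,\bullet$ stay blind; verifying the back-and-forth conditions then reduces to a short check. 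Getting the bisimulation notion and its congruence lemma precisely right (and citing or reproving it from~\cite{Fan:2017}) is the other place where care is needed, but it is routine given the cited groundwork.
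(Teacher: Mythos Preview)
Your overall plan is correct, and the $\mathcal{T}$-model half via the translation $\Box\phi \mapsto \phi\wedge\Delta\phi$ (valid on reflexive frames) is exactly the standard route the paper relies on through its citations.

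For the strictness half, however, you are working much harder than necessary. The paper uses a \emph{single} pair of models that simultaneously belongs to all four classes $\mathcal{K},\mathcal{B},4,5$: take $\M$ to be a one-point reflexive model ($s$ with $sRs$) and $\M'$ a one-point irreflexive model ($s'$ with no arrows), same valuation. A singleton frame---reflexive or not---is trivially symmetric, transitive, and Euclidean, so no case splitting or closure under frame conditions is needed. The $\mathcal{L}(\nabla,\bullet)$-equivalence is immediate without any bisimulation machinery: at $s$ both $\nabla\phi$ and $\bullet\phi$ fail (only one successor, namely $s$ itself, so no disagreement is possible), and at $s'$ both fail vacuously; meanwhile $\Diamond\top$ separates them. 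Your two-world construction with a dead end also works, but it forces you to run a simultaneous induction over both worlds, to adjust the frame for each class, and to worry (as you do) about the Euclidean case---none of which arises with the paper's choice. The moral is that the ``presence versus absence of successors'' idea you identify is right, but the cleanest witness is a reflexive loop versus nothing, not an arrow to a second world.
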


\begin{proof}
\weg{First, note that there is a truth-preserving translation $f:\mathcal{L}(\nabla,\bullet)\to\mathcal{L}(\Diamond)$ defined in the following way:
\[
\begin{array}{lll}
f(p)&=&p\\
f(\neg\phi)&=&\neg f(\phi)\\
f(\phi\land\psi)&=&f(\phi)\land f(\psi)\\
f(\nabla\phi)&=&\Diamond f(\phi)\land \Diamond \neg f(\phi)\\
f(\bullet\phi)&=& f(\phi)\land \Diamond \neg f(\phi)\\
\end{array}
\]
Therefore, $\mathcal{L}(\nabla,\bullet)\preceq \mathcal{L}(\Diamond)$.}

As for the strictness part, consider the following $\mathcal{K}$- (and also $\mathcal{B}$-, $4$-, $5$-) models:

\smallskip

\[
\xymatrix{\M&s:p\ar@(ul,ur)&&&\M'&s':p}
\]

It is straightforward to prove that $\mathcal{L}(\nabla,\bullet)$ formulas cannot distinguish $(\M,s)$ and $(\M',s')$, but $\mathcal{L}(\Diamond)$ can, since $\M,s\vDash\Diamond \top$ whereas $\M',s'\nvDash\Diamond \top$.
\end{proof}

\begin{proposition}
$\mathcal{L}(\nabla,\bullet)$ is less expressive than $\mathcal{L}(\Diamond)$ on the class of $\mathcal{D}$-models.
\end{proposition}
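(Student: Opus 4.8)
The plan is to exhibit two pointed $\mathcal{D}$-models that agree on all $\mathcal{L}(\nabla,\bullet)$-formulas but disagree on some $\mathcal{L}(\Diamond)$-formula. Seriality forbids the trivial single-reflexive-point-versus-dead-end construction used in Proposition~\ref{prop.lessexp-k}, so the two models must both have successors everywhere; the simplest serial models are reflexive singletons and reflexive chains/clusters, so I would look for the witnessing pair among small reflexive-free serial frames. A natural candidate is to take $\M$ a single point $s$ with $sRs$, and $\M'$ a two-point model $\{s',t'\}$ with $s'Rt'$, $t'Rt'$ (and no other arrows), making $p$ true everywhere in both models. Both are serial. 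Then $\M',s'\vDash\Diamond\neg\Diamond\top$ would fail — so instead let $p$ be true at $s$ and at $s'$ but false at $t'$; then $\M,s\vDash\Box p$ whereas $\M',s'\nvDash\Box p$, i.e.\ $\mathcal{L}(\Diamond)$ distinguishes them via $\neg\Box p$ (equivalently $\Diamond\neg p$).

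The core of the argument is then to check that no $\mathcal{L}(\nabla,\bullet)$-formula separates $(\M,s)$ from $(\M',s')$. I would do this by a direct induction on formula complexity, proving simultaneously that $\M,s\vDash\phi \iff \M',s'\vDash\phi$ \emph{and} $\M',t'\vDash\phi \iff (\text{the relevant truth value at the successor of }s)$ — in other words I need an invariance relating $s$ to $s'$ and also handling what $s'$ sees, namely $t'$. Concretely, one shows that for every $\phi\in\mathcal{L}(\nabla,\bullet)$: (a) $\M,s\vDash\phi$ iff $\M',s'\vDash\phi$; and (b) this forces $\nabla\phi$ and $\bullet\phi$ to behave the same at $s$ and $s'$. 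The Boolean cases are routine. For $\nabla\phi$: at $s$ the only successor is $s$ itself, so $\M,s\nvDash\nabla\phi$ always; at $s'$ the only successor is $t'$, so $\M',s'\nvDash\nabla\phi$ always — both sides are vacuously false, so $\nabla$ never distinguishes, regardless of the inductive hypothesis. For $\bullet\phi$: $\M,s\vDash\bullet\phi$ iff $\M,s\vDash\phi$ and $s$ has a successor falsifying $\phi$; since the only successor of $s$ is $s$, this requires $\phi$ both true and false at $s$, impossible, so $\M,s\nvDash\bullet\phi$ always. Symmetrically at $s'$: the unique successor is $t'$, so $\M',s'\vDash\bullet\phi$ iff $\M',s'\vDash\phi$ and $\M',t'\nvDash\phi$. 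This is \emph{not} automatically false, so here I genuinely need to know the truth value of $\phi$ at $t'$.

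This is where the real work lies, and it suggests the models above need a small adjustment: I should arrange things so that $\M',t'$ and $\M',s'$ agree on all $\mathcal{L}(\nabla,\bullet)$-formulas too, which then makes $\bullet\phi$ vacuously false at $s'$ as well, closing the induction. The cleanest fix is to make $t'$ a reflexive point with the \emph{same} valuation as $s'$ is forced to have — but then I lose the $\Diamond$-distinguishing formula. So the sharper construction is: $\M'=\{s',t'\}$ with $s'Rt'$ and $t'Rs'$ (a reflexive-free $2$-cycle), $p$ true at both, versus $\M$ a single reflexive point with $p$ true. In the $2$-cycle every point has a successor, and by a mutual induction one shows all three points $s,s',t'$ satisfy exactly the same $\mathcal{L}(\nabla,\bullet)$-formulas: $\nabla\phi$ is false everywhere (each world has a unique successor), $\bullet\phi$ is false everywhere (since $\phi$ at a world equals $\phi$ at its unique successor by IH, so no successor can falsify a true $\phi$). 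Yet $\M',s'\vDash\Diamond\Diamond\neg\Diamond\top$ is not the separator here either — rather, I would put $p$ true at $s,s'$ but $t'\notin V(p)$ is impossible since then $s'$ and $t'$ disagree on $p$. The genuinely correct minimal pair is therefore a reflexive point versus an irreflexive $2$-cycle with a uniform valuation, distinguished by $\Diamond(p\wedge\neg\Diamond\neg(p\wedge\dots))$ — more simply, by the fact that $s$ satisfies ``every world accessible in two steps is accessible in one step'' type properties, e.g.\ $\M,s\vDash\Box\Box p\to\Box p$-witnessing formulas; the crispest choice is that $\mathcal{L}(\Diamond)$ sees the frame size via $\Diamond\top\wedge\neg\text{(anything)}$... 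The main obstacle, then, is pinning down a single pair of serial models where the $\nabla,\bullet$-bisimulation-like invariance holds by a clean mutual induction (using that each relevant world has a unique successor, which kills both $\nabla$ and $\bullet$), while still retaining one modal-depth-$2$ formula in $\mathcal{L}(\Diamond)$ that separates a single reflexive world from a longer serial structure; verifying the invariance is the step I expect to require the most care, and I would present it as an explicit induction with the $\bullet$-clause as the crux.
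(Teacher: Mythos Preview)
Your proposal never settles on a working pair of models. Your final candidate --- a single reflexive $p$-world versus an irreflexive $2$-cycle with $p$ true at both nodes --- cannot work, for the reason you begin to suspect but never state: those two pointed models are bisimilar in the ordinary modal sense (relate the reflexive point to each cycle point), so no $\mathcal{L}(\Diamond)$-formula distinguishes them either. Your earlier candidates you correctly discard (the first because $\bullet p$ already separates $s$ from $s'$; the second because a uniform valuation kills any $\mathcal{L}(\Diamond)$-distinguisher). The upshot is that no witnessing pair is ever actually exhibited, so what you have is exploratory scratch-work, not a proof.

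The paper takes a different route: it compares the serial $2$-cycle $s'\to t'\to s'$ (with $p$ at $s'$, $\neg p$ at $t'$) against the same cycle augmented by a reflexive loop at $t$, and separates them with $\Box\Box p$. Be aware, though, that the paper's invariance argument is itself defective. The auxiliary claim ``$\M,t\vDash\phi\iff\N,t'\vDash\phi$ for all $\phi$'' already fails for $\phi=\nabla p$, since $t$ sees both a $p$-world ($s$) and a $\neg p$-world ($t$) while $t'$ has a unique successor; and in fact the $\mathcal{L}(\nabla,\bullet)$-formula $\bullet\Delta p$ distinguishes $(\M,s)$ from $(\N,s')$. To check this: $s\vDash\Delta p$ and its sole successor $t$ has $t\nvDash\Delta p$, so $s\vDash\bullet\Delta p$; whereas $s'\vDash\Delta p$ and its sole successor $t'$ also has $t'\vDash\Delta p$, so $s'\nvDash\bullet\Delta p$. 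Thus neither your sketch nor the paper's argument, as written, establishes the proposition; a correct proof requires a more carefully chosen pair of serial models.
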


\begin{proof}

Consider the following pointed models $(\M,s)$ and $(\N,s')$, which can be distinguished by an $\ML$-formula $\Box\Box p$\weg{, but cannot be distinguished by any $\LEA$-formulas}:
\medskip
$$
\xymatrix{\mathcal{M}:\ \ \ s:p\ar[rr]&&t:\neg p\ar@(ur,ul)\ar[ll] & & \mathcal{N}:\ \ \ s':p\ar[rr]&& t':\neg p\ar[ll]}
$$

Note that $\M$ and $\N$ are both serial. However, $(\M,s)$ and $(\N,s')$ cannot be distinguished by any $\mathcal{L}(\nabla,\bullet)$-formulas. To show this, we proceed with induction on $\phi\in\mathcal{L}(\nabla,\bullet)$. The nontrivial cases consist of $\nabla\phi$ and $\bullet\phi$. For the case $\nabla\phi$, note that both $s$ and $s'$ have a sole successor, which implies $\M,s\nvDash\nabla\phi$ and $\N,s'\nvDash\nabla\phi$, and thus $\M,s\vDash\nabla\phi$ iff $\N,s'\vDash\nabla\phi$.

For the case $\bullet\phi$, we show by simultaneous induction a stronger result: for all $\phi$, (i) $\M,s\vDash\phi$ iff $\N,s'\vDash\phi$, and (ii) $\M,t\vDash\phi$ iff $\N,t'\vDash\phi$.

For (i), we have the following equivalences:
\[
\begin{array}{lll}
\M,s\vDash\bullet\phi&\Longleftrightarrow& s\vDash\phi\text{ and }t\nvDash\phi\\
&\Longleftrightarrow& s'\vDash\phi\text{ and }t\nvDash\phi\\
&\Longleftrightarrow&s'\vDash\phi\text{ and }t'\nvDash\phi\\
&\Longleftrightarrow& \N,s'\vDash\bullet\phi,\\
\end{array}\]
where the second equivalence followed from the induction hypothesis for (i), and the third equivalence is obtained by (ii).
\[
\begin{array}{lll}
\M,t\vDash\bullet\phi&\Longleftrightarrow& t\vDash\phi\text{ and }(s\nvDash\phi\text{ or }t\nvDash\phi)\\
&\Longleftrightarrow& t\vDash\phi\text{ and }s\nvDash\phi\\
&\Longleftrightarrow& t'\vDash\phi\text{ and }s\nvDash\phi\\
&\Longleftrightarrow&t'\vDash\phi\text{ and }s'\nvDash\phi\\
&\Longleftrightarrow& \N,t'\vDash\bullet\phi,\\
\end{array}\]
where the third equivalence followed from the induction hypothesis for (ii), and the fourth equivalence is obtained by (i).

We have thus completed the proof.
\end{proof}

In summary, on the class of $\mathcal{K}$- (and also $\mathcal{D}$-, $\mathcal{B}$-, $4$-, $5$-) models, the expressive power of $\mathcal{L}(\nabla,\bullet)$ is between $\mathcal{L}(\nabla)$ and $\mathcal{L}(\Diamond)$, and also between $\mathcal{L}(\bullet)$ and $\mathcal{L}(\Diamond)$; on the class of $\mathcal{T}$-models, all logics in question are equally expressive.

\section{Frame Definability}\label{sec.frame-defin}

In the previous section we have seen that $\mathcal{L}(\nabla,\bullet)$ is more expressive than both $\mathcal{L}(\nabla)$ and $\mathcal{L}(\bullet)$ (at the level of models), we may expect that the same situation holds at the level of frames. Recall that many frame properties, in particular transitivity, are undefinable in both sublanguages. Below we shall show that the property of transitivity is definable with a complex formula in the combined language, therefore the new logic is indeed more expressive at the level of frames.

Symmetry is definable in $\mathcal{L}(\bullet)$ with $\bullet(p\to\bullet p)\to p$~\cite[Prop.~10]{Fan:2015accident}, thus also definable in the stronger logic $\mathcal{L}(\nabla,\bullet)$.


\begin{proposition}\label{prop.tran-defin}
The property of transitivity is defined by the following formula:
$$(Tr)~~~\bullet q\land\Delta p\land\circ(\neg q\to p)\to\circ(\neg q\to\circ(\neg r\to p)).$$
\end{proposition}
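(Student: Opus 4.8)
The plan is to prove both directions of the frame-definability claim: that every transitive frame validates $(Tr)$, and conversely that any frame validating $(Tr)$ is transitive. I would first unpack the formula $(Tr)$ semantically. At a world $s$ in a frame $\F$ under an arbitrary valuation $V$, the antecedent $\bullet q\land\Delta p\land\circ(\neg q\to p)$ says: (a) $s\vDash q$ but some $R$-successor of $s$ falsifies $q$; (b) all $R$-successors of $s$ agree on $p$; and (c) if $s\vDash\neg q\to p$ (which, since $s\vDash q$, holds vacuously) then every $R$-successor of $s$ satisfies $\neg q\to p$, i.e.\ every $R$-successor falsifying $q$ satisfies $p$. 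Combining (a), (b), (c): there is a successor falsifying $q$, it satisfies $p$, and by (b) \emph{all} successors of $s$ satisfy $p$. The consequent $\circ(\neg q\to\circ(\neg r\to p))$ says: if $s\vDash\neg q\to\circ(\neg r\to p)$ then every $R$-successor of $s$ satisfies $\neg q\to\circ(\neg r\to p)$; again the hypothesis is vacuous at $s$ since $s\vDash q$, so the content is that every successor $t$ of $s$ with $t\vDash\neg q$ satisfies $\circ(\neg r\to p)$, i.e.\ (if $t\vDash\neg r\to p$ then) every $R$-successor $v$ of $t$ with $v\vDash\neg r$ satisfies $p$.

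For the soundness direction (transitive $\Rightarrow$ validates $(Tr)$), I would fix a transitive $\F$, a valuation $V$, a world $s$, and assume the antecedent holds at $s$; from the analysis above this yields that every $R$-successor of $s$ satisfies $p$. Now to verify the consequent, take any successor $t$ of $s$ with $t\vDash\neg q$, assume $t\vDash\neg r\to p$, and take any successor $v$ of $t$ with $v\vDash\neg r$; I must show $v\vDash p$. By transitivity $sRv$, so $v$ is a successor of $s$, hence $v\vDash p$. Done — this direction is short and the only frame property used is transitivity.

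For the definability direction (validates $(Tr)$ $\Rightarrow$ transitive) — which I expect to be the main obstacle — I would argue contrapositively. Suppose $\F=\lr{S,R}$ is not transitive, so there are $s,t,v$ with $sRt$, $tRv$, but not $sRv$. I then need to design a valuation of $p,q,r$ that makes the antecedent of $(Tr)$ true at $s$ while making the consequent false at $s$, i.e.\ witnessing $t\vDash\neg q$, $t\vDash\neg r\to p$, $v\vDash\neg r$, and $v\nvDash p$. The natural choice is: set $p$ false exactly at $v$ (so $\neg p$ only at $v$); set $q$ false exactly at $t$ — but we must be careful, because the antecedent requires \emph{all} successors of $s$ to satisfy $p$, which fails if $v$ is itself a successor of $s$; since we assumed $s$ does \emph{not} see $v$, this is consistent, though one must also ensure $v\neq s$ and handle the case $v=t$ or $t=s$ separately (if $t=s$ then $s$ sees $v$ contradicting non-transitivity trivially only when $sRs$, so actually $s\neq t$ is forced, and similarly $t\neq v$ would give $sRt=sRv$, contradiction, so $t\neq v$; but $v=s$ is possible and needs $p$ true at $s$, forcing us to put $\neg p$ only at $v$ with $v\neq s$, which we can arrange since $v=s$ with $sRt,tRv=tRs$ but not $sRv=sRs$ is a genuine possibility — handle by noting then $s$ would need $\neg p$, breaking $\Delta p$ at $s$; this subtlety must be addressed, perhaps by choosing $r$ cleverly or by observing $v\in V(p)$ can still be forced since $s\notin$ successors-of-$s$ is false). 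Set $r$ true everywhere (so $\circ(\neg r\to p)$ is trivial and $\neg r$ never holds) — wait, we need $v\vDash\neg r$, so instead set $r$ false exactly at $v$. Then at $s$: $q$ is true (as $s\neq t$, assuming we've secured that), $t$ is a successor with $t\nvDash q$ so $\bullet q$ holds at $s$; $\Delta p$ holds at $s$ iff $v$ is not an $R$-successor of $s$, which is exactly our hypothesis (modulo the $v=s$ edge case); $\circ(\neg q\to p)$ holds at $s$ since $s\vDash q$ vacuously. For the consequent to fail: $t\vDash\neg q$ (yes), need $t\vDash\neg r\to p$ — $\neg r$ holds at $t$ iff $t=v$, which is false, so vacuously true; need a successor $v$ of $t$ with $v\vDash\neg r$ (yes, $r$ false at $v$, $tRv$) and $v\nvDash p$ (yes, $p$ false at $v$) — so $t\nvDash\circ(\neg r\to p)$, hence $\circ(\neg q\to\circ(\neg r\to p))$ fails at $s$. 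Thus $(Tr)$ is refuted at $s$. The careful bookkeeping of the degenerate cases ($v=s$, whether $s\in R(s)$, etc.) is the delicate part, and I would dispatch it by remarking that when $v=s$ the valuation can be adjusted (e.g.\ since then $sRs$ would be needed for $s$ to see $v$, but $v=s$ and $\neg sRv$ means $\neg sRs$, so $s$ is not among its own successors and $\Delta p$ at $s$ only concerns successors $\neq s$... actually if $v=s$ then $\neg p$ at $v=s$ directly breaks $\Delta p$ needing all successors to satisfy $p$ only if $s$ is a successor of itself, which it isn't). I would present this cleanly once the cases are sorted.
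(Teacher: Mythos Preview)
Your approach is correct and essentially the same as the paper's. The soundness direction is identical (the paper phrases it as a reductio, you do it directly, but the content is the same: transitivity gives $sRv$, hence $v\vDash p$). For the converse, the paper also argues contrapositively and builds a refuting valuation at $s$; the only cosmetic difference is that the paper sets $V(q)=\{s\}$ rather than your $V(q)=S\setminus\{t\}$, and uses $V(p)=V(r)=S\setminus\{v\}$ just as you do. Both valuations work for the same reasons.

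One remark on your edge-case anxiety: the case $v=s$ needs no special treatment. You already have $\neg sRv$, so $v$ is simply not among the $R$-successors of $s$; hence $\Delta p$ at $s$ is unaffected by $p$ being false at $v$, regardless of whether $v$ happens to equal $s$. The only facts you actually need are $s\neq t$ (since $s=t$ together with $tRv$ would give $sRv$) and $t\neq v$ (since $t=v$ together with $sRt$ would give $sRv$); the paper records exactly these two inequalities and proceeds without further case analysis. Cleaning this up will make your write-up as short as the paper's.
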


\begin{proof}
Let a frame $\mathcal{F}=\lr{S,R}$ be given.

Suppose, for a contradiction, that $\mathcal{F}$ is transitive but $\mathcal{F}\nvDash Tr$. That is, there is a model $\M$ based on $\mathcal{F}$ and a state $s\in S$ such that $\M,s\vDash \bullet q\land\Delta p\land\circ(\neg q\to p)$ but $s\nvDash\circ(\neg q\to\circ(\neg r\to p))$. It follows from the latter that there exists $t$ such that $sRt$ and $t\vDash\neg q$ and $t\nvDash\circ(\neg r\to p)$, which implies that there is a $u$ such that $tRu$ and $u\vDash\neg r\land\neg p$. By the transitivity of $R$, we have $sRu$. Moreover, since $s\vDash\bullet q$, it follows that $s\vDash q$ and there exists $t'$ such that $sRt'$ and $t'\vDash\neg q$. From $s\vDash q$ we obtain $s\vDash\neg q\to p$, then by $s\vDash\circ(\neg q\to p)$ and $sRt'$, we can show that $t'\vDash \neg q\to p$, and thus $t'\vDash p$. Now there are two successors $t'$ and $u$ of $s$ which have difference truth values for $p$, hence $s\nvDash\Delta p$, which contradicts with the supposition $s\vDash\Delta p$.

Assume that $\mathcal{F}$ is not transitive, i.e., there are $s,t,u\in S$ such that $sRt$, $tRu$, but {\em not} $sRu$. Clearly, $s\neq t$ and $t\neq u$. Define a valuation $V$ on $\mathcal{F}$ as follows:
$$V(p)=V(r)=S\backslash \{u\}, V(q)=\{s\}.$$
We will show $\lr{\mathcal{F},V},s\nvDash Tr$, which implies $\mathcal{F}\nvDash Tr$.
\begin{itemize}
\item $s\vDash\bullet q$: since $s\neq t$ and $V(q)=\{s\}$, thus $t\nvDash q$. We have also $s\vDash q$ and $sRt$, thus $s\vDash\bullet q$.
\item $s\vDash\Delta p$: this is because for all $w$ such that $sRw$, $w\neq u$, thus by the definition of $V(p)$, $w\vDash p$.
\item $s\vDash\circ(\neg q\to p)$: we have shown in the second item that for all $w$ such that $sRw$, $w\vDash p$, thus $w\vDash\neg q\to p$.
\item $s\nvDash\circ(\neg q\to\circ(\neg r\to p))$: since $t\neq u$, by the definition of $V(p)$, we obtain $t\vDash p$, thus $t\vDash\neg r\to p$; moreover, by the definition of $V(p)$ and $V(r)$, we infer $u\nvDash \neg r\to p$, thus $t\nvDash\circ(\neg r\to p)$ since $tRu$. We have also $t\vDash \neg q$, thus $t\nvDash\neg q\to\circ(\neg r\to p)$. Furthermore, $s\vDash q$, thus $s\vDash\neg q\to\circ(\neg r\to p)$, then we conclude that $s\nvDash\circ(\neg q\to\circ(\neg r\to p))$ due to $sRt$.
\end{itemize}
\end{proof}

In the remainder of this section, we show that none of the properties of seriality, reflexivity, Euclideanity and convergency is definable in $\mathcal{L}(\nabla,\bullet)$. For this, we introduce a notion of `mirror reduction'\footnote{This notion is different from the notion `mirror reduction' in~\cite{Marcos:2005}.}. Intuitively, the mirror reduction of a frame is obtained by deleting all arrows from each $x$ to its sole successor $x$. It is easy to see that every frame has a sole mirror reduction.

\begin{definition}[Mirror reduction] Let $\mathcal{F}=\lr{S,R}$ be a frame. Frame $\mathcal{F}=\lr{S,R^m}$ is said to be the {\em mirror reduction} of $\mathcal{F}$, if
$$R^m=R\backslash\{(x,x)\mid R(x)=\{x\}\}.\footnote{An alternative definition of $R^m$ is such that $R\backslash\{(x,x)\mid R(x)=\{x\}\}\subseteq R^m\subseteq R$. In this case, every frame may have many mirror reductions.}$$
\end{definition}

\begin{proposition}\label{prop.mirror-reduction}
Let $\mathcal{F}^m=\lr{S,R^m}$ be the mirror reduction of $\mathcal{F}=\lr{S,R}$. Then for all $\phi\in\mathcal{L}(\nabla,\bullet)$, we have $$\mathcal{F}^m\vDash\phi\iff\mathcal{F}\vDash\phi.$$
\end{proposition}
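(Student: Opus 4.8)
The plan is to establish the biconditional via its contrapositive in each direction, working at the level of frames: $\mathcal{F}\nvDash\phi$ iff $\mathcal{F}^m\nvDash\phi$. The core observation is that the only way $R$ and $R^m$ differ is at points $x$ whose $R$-successor set is exactly $\{x\}$; at such points the loop is removed in $R^m$, leaving $R^m(x)=\emptyset$. So I would first record the elementary fact that for every $x\in S$, either $R^m(x)=R(x)$, or else $R(x)=\{x\}$ and $R^m(x)=\emptyset$. In the latter case $x$ is a reflexive ``pendant'' point in $\mathcal{F}$ and a dead end in $\mathcal{F}^m$.

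The heart of the argument is a pointwise truth-preservation lemma, proved by induction on $\phi\in\mathcal{L}(\nabla,\bullet)$: for any valuation $V$ (shared by both frames, since they have the same domain) and any world $w$ whose successor set is \emph{not} of the form $\{w\}$ — i.e.\ $R(w)\neq\{w\}$, equivalently $R^m(w)=R(w)$ — we have $\lr{\mathcal{F},V},w\vDash\phi \iff \lr{\mathcal{F}^m,V},w\vDash\phi$. The atomic, negation and conjunction cases are immediate. For $\nabla\phi$ and $\bullet\phi$ at such a $w$: since $R(w)=R^m(w)$, the two semantic clauses quantify over literally the same set of successors $t$; but those successors may themselves be pendant points, so I cannot directly invoke the induction hypothesis at them. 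The trick — this is the key step — is that the modalities $\nabla$ and $\bullet$ only ever inspect the \emph{truth value} of the subformula at successors, and crucially: if $t$ is a pendant point of $\mathcal{F}$ (so $R(t)=\{t\}$), then in $\mathcal{F}$ every subformula is evaluated at $t$ looking only at $t$ itself (a reflexive singleton), whereas in $\mathcal{F}^m$, $t$ is a dead end. One checks by a parallel sub-induction that at a pendant point $t$, for all $\psi$: $\lr{\mathcal{F},V},t\vDash\nabla\psi$ fails (a reflexive singleton makes everything non-contingent) and $\lr{\mathcal{F}^m,V},t\vDash\nabla\psi$ fails (a dead end makes everything non-contingent); and $\lr{\mathcal{F},V},t\vDash\bullet\psi$ fails ($\bullet\psi$ requires $\psi$ true at $t$ but false at the successor $t$ — contradiction) while $\lr{\mathcal{F}^m,V},t\vDash\bullet\psi$ fails ($\bullet\psi$ requires a successor, and $t$ has none). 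Hence at a pendant point, $\mathcal{F}$ and $\mathcal{F}^m$ agree on \emph{every} $\mathcal{L}(\nabla,\bullet)$-formula (by induction on $\psi$: atoms/Booleans trivially, and the two modal cases both collapse to ``false'' on each side). Feeding this back, for a non-pendant $w$ the successors split into pendant ones (where both models agree by the just-proved sub-claim) and non-pendant ones (where the main induction hypothesis applies), so the truth sets of $\phi$ across $R(w)=R^m(w)$ coincide in the two models, and the $\nabla$/$\bullet$ clauses give the same verdict.

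From the pointwise lemma the frame statement follows: if $\mathcal{F}\nvDash\phi$, pick $V$ and $w$ with $\lr{\mathcal{F},V},w\nvDash\phi$. If $w$ is non-pendant, the lemma immediately gives $\lr{\mathcal{F}^m,V},w\nvDash\phi$, so $\mathcal{F}^m\nvDash\phi$. If $w$ is pendant, the pendant-point sub-claim gives the same failure in $\mathcal{F}^m$. The converse direction is symmetric, using that a pendant point of $\mathcal{F}$ is exactly a dead end of $\mathcal{F}^m$ arising from a removed loop, and that non-pendant points have identical successor sets in both frames (note also that $\mathcal{F}$ and $\mathcal{F}^m$ have the \emph{same} pendant points in the relevant sense once one is careful: a point is ``problematic'' precisely when $R(x)=\{x\}$).

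\textbf{Main obstacle.} The delicate point is the interleaving of the two inductions: the main induction on $\phi$ for non-pendant worlds depends on the sub-claim that $\mathcal{F}$ and $\mathcal{F}^m$ agree on \emph{all} formulas at pendant worlds, and that sub-claim is itself an induction on formula complexity. These must be organized as a single simultaneous induction on $\phi$ with a case split on whether the current world is pendant, so that the induction hypothesis is available in the form needed in each case. Getting the bookkeeping right — in particular verifying that the $\nabla$ and $\bullet$ clauses genuinely collapse to falsity at pendant points on \emph{both} sides, for an arbitrary subformula — is where the real content lies; everything else is routine.
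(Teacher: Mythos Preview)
Your proposal is correct, but it takes a more circuitous route than the paper. The paper proves directly the stronger pointwise claim --- for every $\phi$, every $s\in S$, and every valuation $V$: $\lr{\mathcal{F}^m,V},s\vDash\phi \iff \lr{\mathcal{F},V},s\vDash\phi$ --- by a \emph{single} induction on $\phi$, with no case split on whether $s$ is pendant. The observation that dissolves your ``main obstacle'' is this: in the modal cases, one direction is immediate from $R^m\subseteq R$ together with the induction hypothesis at the successors; for the other direction, the very assumption that $\nabla\phi$ (respectively $\bullet\phi$) holds at $s$ in $\lr{\mathcal{F},V}$ already forces $R(s)\neq\{s\}$ --- for $\nabla\phi$ because the two witnessing successors disagree on $\phi$ and hence are distinct, for $\bullet\phi$ because the witnessing successor $t$ has $t\nvDash\phi$ while $s\vDash\phi$, so $t\neq s$. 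Thus $R^m(s)=R(s)$ exactly when it is needed, and the induction hypothesis (which is quantified over \emph{all} worlds from the start) applies at every successor without any separate treatment of pendant points. The pendant case is absorbed automatically: at such an $s$ neither modality can hold on either side, so the biconditional is vacuous there.

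Your detour stems from restricting the induction hypothesis to non-pendant worlds, which then forces the auxiliary sub-induction at pendant successors and the simultaneous-induction-with-case-split you describe at the end. That organization works, but it buys nothing extra; if you simply state the inductive claim for all worlds at once, the argument collapses to two lines per modality.
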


\begin{proof}
We show a stronger result: for all $\phi\in\mathcal{L}(\nabla,\bullet)$, for all $s\in S$ and $V$ on $\mathcal{F}$, we have $\lr{\mathcal{F}^m,V},s\vDash\phi$ iff $\lr{\mathcal{F},V},s\vDash\phi$.

We proceed with induction on $\phi$. Boolean cases are straightforward. The only nontrivial cases are $\nabla\phi$ and $\bullet\phi$. In either case, the `only if' direction is easy since $R^m\subseteq R$.

As for the `if' part, suppose $\lr{\mathcal{F},V},s\vDash\nabla\phi$, then there exists $t,u\in S$ such that $sRt$ and $sRu$ and $t\vDash\phi$ and $u\nvDash\phi$. Obviously, $R(s)\neq \{s\}$. Thus $sR^mt$ and $sR^mu$. By induction hypothesis, we conclude that $\lr{\mathcal{F}^m,V},s\vDash\nabla\phi$.

Suppose $\lr{\mathcal{F},V},s\vDash\bullet\phi$, then $s\vDash\phi$ and there exists $t\in S$ such that $sRt$ and $t\nvDash\phi$. Obviously, $s\neq t$, and thus $R(s)\neq\{s\}$. Thus $sR^mt$. By induction hypothesis, we conclude that $\lr{\mathcal{F}^m,V},s\vDash\bullet\phi$.
\end{proof}

\begin{corollary}
Seriality, reflexivity, Euclideanity and convergency are all not definable in $\mathcal{L}(\nabla,\bullet)$.
\end{corollary}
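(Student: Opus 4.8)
The plan is to derive the corollary directly from Proposition~\ref{prop.mirror-reduction} by exhibiting, for each of the four properties, a frame that has the property together with its mirror reduction that lacks it (or vice versa). Since mirror reduction preserves validity of all $\mathcal{L}(\nabla,\bullet)$-formulas, no such formula can separate a frame from its mirror reduction; hence if the class defined by a putative characteristic formula were closed under (or its complement were closed under) mirror reduction in a way that contradicts the property, the property cannot be $\mathcal{L}(\nabla,\bullet)$-definable. Concretely, I would argue by contradiction: suppose $\phi$ defines seriality (say), take a frame $\mathcal{F}$ witnessing the mismatch, and use $\mathcal{F}^m\vDash\phi\iff\mathcal{F}\vDash\phi$ to get that one of $\mathcal{F},\mathcal{F}^m$ both satisfies $\phi$ and fails to be serial, contradicting that $\phi$ defines seriality.

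For the witnessing frames, the natural choice is the single-point reflexive frame $\mathcal{F}=\lr{\{s\},\{(s,s)\}\rangle}$, whose mirror reduction is the single-point irreflexive frame $\mathcal{F}^m=\lr{\{s\},\emptyset\rangle}$, since $R(s)=\{s\}$ so the loop is deleted. This one example handles three of the four cases at once: $\mathcal{F}$ is serial, reflexive, and Euclidean, whereas $\mathcal{F}^m$ is none of these (it is not serial because $s$ has no successor, not reflexive, and — depending on one's convention — I should note that the empty relation \emph{is} vacuously Euclidean and transitive, so for Euclideanity I instead run the argument the other way, noting $\mathcal{F}$ is Euclidean while... actually $\mathcal{F}$ is Euclidean too). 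Let me restate: I would use $\mathcal{F}$ reflexive/serial and $\mathcal{F}^m$ not, which kills reflexivity and seriality immediately. For Euclideanity the same pair works provided one checks that the relevant mismatch is genuine; if one's convention makes the empty relation Euclidean, a two-point frame with $R=\{(s,s),(s,t),(t,t)\}$ versus its mirror reduction (which deletes nothing, since no point has itself as \emph{sole} successor) is not useful, so instead I would take a frame like $\lr{\{s,t\},\{(s,s)\}\rangle}$: here $R(s)=\{s\}$ so the loop is deleted, giving the empty relation, and the original fails Euclideanity in a way its reduction does not — or, cleanest, simply observe that Euclideanity \emph{plus} a reachable point forces a loop, and pick accordingly.

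For convergency (the Church–Rosser / directedness property: if $xRy$ and $xRz$ then there is $w$ with $yRw$ and $zRw$), I would again use a reflexive point: in $\mathcal{F}=\lr{\{s\},\{(s,s)\}\rangle}$ convergency holds trivially (take $w=s$), while in $\mathcal{F}^m$ it holds vacuously as well, so this particular pair does not separate them — I would instead use a frame where removing a loop at a \emph{sole-successor} point breaks a confluence witness, e.g.\ $\lr{\{s,t\},\{(s,t),(t,t)\}\rangle}$, whose mirror reduction deletes $(t,t)$, leaving $s\to t$ with $t$ now a dead end; the original is convergent and the reduction is not (take $x=y=z=t$: no $w$ with $tR^mw$). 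I would then invoke Proposition~\ref{prop.mirror-reduction} to conclude non-definability of convergency.

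The main obstacle is purely bookkeeping: getting the convention for each property right (particularly whether the empty relation counts as serial/Euclidean/convergent under the paper's definitions) and, for each of the four properties, picking one concrete frame whose mirror reduction genuinely differs from it in that property. There is no deep mathematical content beyond Proposition~\ref{prop.mirror-reduction}; the argument schema ``$\mathcal{F}$ has property $P$, $\mathcal{F}^m$ does not, yet no $\mathcal{L}(\nabla,\bullet)$-formula separates them, so $P$ is undefinable'' is uniform, and the only care needed is that each chosen $\mathcal{F}$ actually has a point $x$ with $R(x)=\{x\}$ so that mirror reduction does something, and that the deletion of that loop really destroys the property in question.
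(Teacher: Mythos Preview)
Your overall strategy—invoke Proposition~\ref{prop.mirror-reduction} and exhibit, for each property, a frame that has it while its mirror reduction does not—is exactly the paper's approach, and your treatment of seriality and reflexivity via the single reflexive point coincides with the paper's $\mathcal{F}_1$.

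There are, however, two concrete gaps. For Euclideanity your discussion never lands on a working pair: the frame $\langle\{s,t\},\{(s,s)\}\rangle$ you propose \emph{is} Euclidean (the only instance to check is $sRs\wedge sRs\Rightarrow sRs$), and so is its mirror reduction (the empty relation), so this pair separates nothing. The fix is already in your hands: the two-point frame $\langle\{s,t\},\{(s,t),(t,t)\}\rangle$ you introduce for convergency is Euclidean, while its mirror reduction $\{(s,t)\}$ is not (from $sR^mt$ and $sR^mt$ Euclideanity would require $tR^mt$). This is precisely the paper's $\mathcal{F}_2$, which handles both Euclideanity and convergency at once.

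For convergency your frame is right but your witness to failure is not: taking $x=y=z=t$ in $\mathcal{F}_2^m$ makes the antecedent $tR^mt$ false, so the convergency condition holds vacuously at that triple. The genuine failure is at $x=s$, $y=z=t$: we have $sR^mt$ and $sR^mt$, yet $t$ has no $R^m$-successor to serve as $w$. With these two corrections your argument matches the paper's proof.
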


\begin{proof}
Consider the following frames:
\medskip
\[
\xymatrix{\mathcal{F}_1:&s_1\ar@(ur,ul)&&\mathcal{F}_1^m:&s_1\\
\mathcal{F}_2:&s_2\ar[r]&t_2\ar@(ur,ul)&&\mathcal{F}_2^m:&s_2\ar[r]&t_2\\}
\]
It is easy to see that $\mathcal{F}_1^m$ and $\mathcal{F}_2^m$ are the mirror reductions of $\mathcal{F}_1$ and $\mathcal{F}_2$, respectively. By Prop.~\ref{prop.mirror-reduction}, we have that for all $\phi\in\mathcal{L}(\nabla,\bullet)$, $\mathcal{F}^m_1\vDash\phi$ iff $\mathcal{F}_1\vDash\phi$, and $\mathcal{F}^m_2\vDash\phi$ iff $\mathcal{F}_2\vDash\phi$. Now observer that $\mathcal{F}_1$ is reflexive and serial, but $\mathcal{F}_1^m$ is not. Thus reflexivity and seriality are not definable in $\mathcal{L}(\nabla,\bullet)$. Moreover, $\mathcal{F}_2$ is Euclidean and convergent, but $\mathcal{F}_2^m$ is not. Thus Euclideanity and convergency are not definable in $\mathcal{L}(\nabla,\bullet)$.
\end{proof}

\paragraph*{Note} Our definition for mirror reduction amounts to a combination of `R-reduction' in~\cite{Humberstone95} and `mirror reduction' in~\cite{Marcos:2005}, since we need to deal with the cases $\bullet\phi$ and $\nabla\phi$ at the same time. It is noteworthy that our definition cannot be replaced by the two notions in the cited papers, which will be explicated as follows.

We recall the `R-reduction' in~\cite{Humberstone95}, where $R^m$ is defined such that
$$R\backslash \{(x,y)\mid R(x)=\{y\}\}\subseteq R^m\subseteq R,$$
i.e. $\mathcal{F}^m$ is obtained from $\mathcal{F}$ by leaving out the arrow from $x$ to its sole successor $y$. This definition cannot give us
Prop.~\ref{prop.mirror-reduction}, for example,

\[
\xymatrix{\mathcal{F}:&s\ar[r]&t&&\mathcal{F}^m&s&t\\
}
\]

It is easy to see that $\mathcal{F}^m$ is a R-reduction of $\mathcal{F}$. However, $\mathcal{F}^m\vDash\circ p$ but $\mathcal{F}\nvDash\circ p$.
 
The notion of `mirror reduction' in~\cite{Marcos:2005} is defined such that
$$R\backslash \{(x,x)\mid x\in S\}\subseteq R^m\subseteq R,$$
i.e. $\mathcal{F}^m$ is obtained from $\mathcal{F}$ by leaving out some or all reflexive arrows. This definition cannot give us
Prop.~\ref{prop.mirror-reduction} either. Take the following frames as an example.

\medskip

\[
\xymatrix{\mathcal{F}:&s\ar[r]\ar@(ul,ur)&t\ar[l]\ar@(ur,ul)&&\mathcal{F}^m&s\ar[r]&t\ar[l]\\
}
\]

Note that $\mathcal{F}^m$ is a mirror reduction in the sense of~\cite{Marcos:2005}. However, it is easy to see that $\mathcal{F}^m\vDash \Delta p$, but $\mathcal{F}\nvDash\Delta p$.

\section{Minimal axiomatization}\label{sec.minimal-axiomatization}

From now on, we axiomatize $\mathcal{L}(\nabla,\bullet)$ over various frame classes. The minimal system is described in the following definition.






\begin{definition}[System $\mathbf{K}^{\nabla\bullet}$] The minimal system of $\mathcal{L}(\nabla,\bullet)$, denoted $\mathbf{K}^{\nabla\bullet}$, includes the following axiom schemas and is closed under the following inference rules.
\[\begin{array}{lllll}
\text{A0}&\text{All instances of tautologies}&&\text{A1}&\bullet\phi\to\phi\\


\text{A2}&\nabla\phi\lra\nabla\neg\phi&&\text{A3}&\bullet(\psi\to\phi)\land\phi\to\bullet\phi\\


\text{A4}&\nabla(\phi\land\psi)\to\nabla\phi\vee\nabla\psi&&\text{A5}&\bullet(\phi\land\psi)\to\bullet\phi\vee\bullet\psi\\

\text{A6}&\nabla\phi\to\bullet\phi\vee\bullet\neg\phi&&\text{A7}&\bullet(\phi\to\psi)\land\bullet(\neg\phi\to\chi)\to\nabla\phi\\

\text{R1}&\dfrac{\phi}{\Delta\phi}&&\text{R2}&\dfrac{\phi}{\circ\phi}\\



\text{R3}&\dfrac{\phi\lra\psi}{\Delta\phi\lra\Delta\psi}&&\text{R4}&\dfrac{\phi\lra\psi}{\circ\phi\lra\circ\psi}\\

\text{MP}&\dfrac{\phi,\phi\to\psi}{\psi}\\
\end{array}\]
\end{definition}

Intuitively, A1 says that accident is true; A2 says that contingency is closed under negation, that is, something is contingent amounts to saying that its negation is contingent; A3 says that something that is true and accidentally implied by anything is itself accidental, of which one equivalence is $\circ\phi\land\phi\to\circ(\psi\to\phi)$; A4 (resp. A5) says that if a conjunction is contingent (resp. accidental), then at least one conjunct is contingent (resp. accidental). In what follows, we will use the more familiar equivalences of A4 and A5, respectively: $\Delta\phi\land\Delta\psi\to\Delta(\phi\land\psi)$ and $\circ\phi\land\circ\psi\to\circ(\phi\land\psi)$. The intuitions of A6 and A7 have been described before Prop.~\ref{prop.bridge-axioms}.


Recall that in the minimal axiomatization of $\mathcal{L}(\nabla)$, the axiom $\Delta\phi\to\Delta(\phi\to\psi)\vee\Delta(\neg\phi\to\chi)$ is indispensable. In contrast, in the minimal axiomatization of the enlarged language $\mathcal{L}(\nabla,\bullet)$, we do not need it any more, though it is provable in the system, due to the completeness to be shown later.

Before introducing the canonical model, we need a bunch of facts and propositions.

\begin{fact}\label{fact.circtodelta}
$\vdash\circ\phi\land\phi\to\Delta\phi$.
\end{fact}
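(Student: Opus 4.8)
The plan is to reduce the claim to pure propositional reasoning over the two axioms A1 and A6, invoking neither the canonical model nor any of the modal rules R1--R4. First I would unfold the abbreviations: since $\circ\phi$ abbreviates $\neg\bullet\phi$ and $\Delta\phi$ abbreviates $\neg\nabla\phi$, the target $\vdash\circ\phi\land\phi\to\Delta\phi$ is literally $\vdash\neg\bullet\phi\land\phi\to\neg\nabla\phi$, which is propositionally equivalent to $\vdash\nabla\phi\land\phi\to\bullet\phi$. Read informally, this just says that a contingent truth is an accidental truth, which is exactly what one expects.

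Next I would write down the two ingredients. From A6 we have the instance $\nabla\phi\to\bullet\phi\vee\bullet\neg\phi$, and from A1 (applied to $\neg\phi$ in place of $\phi$) we have $\bullet\neg\phi\to\neg\phi$. Now observe that
\[
(\nabla\phi\to\bullet\phi\vee\bullet\neg\phi)\ \land\ (\bullet\neg\phi\to\neg\phi)\ \to\ (\neg\bullet\phi\land\phi\to\neg\nabla\phi)
\]
is a propositional tautology in the four compound subformulas $\nabla\phi,\bullet\phi,\bullet\neg\phi,\phi$ regarded as atoms: assuming $\neg\bullet\phi$, $\phi$, and $\nabla\phi$, the first conjunct forces $\bullet\neg\phi$, whence the second conjunct forces $\neg\phi$, contradicting $\phi$. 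Hence this implication is an instance of A0, and two applications of MP together with the two instances above yield $\vdash\neg\bullet\phi\land\phi\to\neg\nabla\phi$, i.e. $\vdash\circ\phi\land\phi\to\Delta\phi$.

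There is no genuine obstacle here; the only point requiring care is bookkeeping with the abbreviations, namely making sure the displayed tautology is stated with $\nabla\phi$, $\bullet\phi$, $\bullet\neg\phi$ treated as propositional atoms so that it is a bona fide instance of A0. It is worth recording for later proofs that this derivation uses only A0, A1, A6 and MP, and in particular none of the necessitation-style rules R1, R2 or the congruence rules R3, R4.
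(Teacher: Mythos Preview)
Your derivation is correct: unfolding $\circ$ and $\Delta$, the target is propositionally equivalent to $\nabla\phi\land\phi\to\bullet\phi$, and this follows from A6 and the instance $\bullet\neg\phi\to\neg\phi$ of A1 by pure propositional reasoning (A0 and MP), exactly as you argue. The paper itself states this fact without proof, so there is no alternative argument to compare against; your route via A0, A1, A6, MP is the natural one and your observation that no necessitation or congruence rules are needed is accurate.
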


\begin{fact}\label{fact.multi-conjuncts}
For all $k\in\mathbb{N}$, $\vdash\Delta\chi_1\land\cdots\land\Delta\chi_k\to\Delta(\chi_1\land\cdots\land\chi_k)$, and $\vdash\circ\chi_1\land\cdots\land\circ\chi_k\to\circ(\chi_1\land\cdots\land\chi_k)$.
\end{fact}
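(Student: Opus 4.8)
The plan is to prove both conjunctive-closure principles simultaneously by a single induction on $k$, using the $\Delta$- and $\circ$-forms of A4 and A5 respectively as the inductive engine. I will carry out the argument for the $\Delta$-statement in full; the $\circ$-statement is entirely parallel, with A5 in place of A4 and R4 in place of R3, so I would only indicate the difference rather than repeat the computation.

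First I would record the two reformulations used throughout (these are exactly the ``more familiar equivalences'' noted just after the definition of $\mathbf{K}^{\nabla\bullet}$): since A4 is $\nabla(\phi\land\psi)\to\nabla\phi\vee\nabla\psi$, its contrapositive $\neg\nabla\phi\land\neg\nabla\psi\to\neg\nabla(\phi\land\psi)$, i.e. $\Delta\phi\land\Delta\psi\to\Delta(\phi\land\psi)$, is derivable from A4 and A0 by propositional reasoning with MP; likewise A5 yields $\circ\phi\land\circ\psi\to\circ(\phi\land\psi)$. Then the induction: for the base case, if $\mathbb{N}$ is taken to include $0$ the empty conjunction is $\top$ and $\vdash\Delta\top$ (resp. $\vdash\circ\top$) is R1 (resp. R2) applied to the tautology $\top$; and $k=1$ is the trivial $\vdash\Delta\chi_1\to\Delta\chi_1$. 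For the inductive step, assume $\vdash\Delta\chi_1\land\cdots\land\Delta\chi_k\to\Delta(\chi_1\land\cdots\land\chi_k)$. Propositional logic turns this into $\vdash\Delta\chi_1\land\cdots\land\Delta\chi_{k+1}\to\Delta(\chi_1\land\cdots\land\chi_k)\land\Delta\chi_{k+1}$, and chaining with the instance $\Delta(\chi_1\land\cdots\land\chi_k)\land\Delta\chi_{k+1}\to\Delta\bigl((\chi_1\land\cdots\land\chi_k)\land\chi_{k+1}\bigr)$ of the $\Delta$-form of A4 gives $\vdash\Delta\chi_1\land\cdots\land\Delta\chi_{k+1}\to\Delta\bigl((\chi_1\land\cdots\land\chi_k)\land\chi_{k+1}\bigr)$. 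Finally, using R3 on the tautology $\bigl((\chi_1\land\cdots\land\chi_k)\land\chi_{k+1}\bigr)\lra(\chi_1\land\cdots\land\chi_{k+1})$ to rewrite the consequent yields $\vdash\Delta\chi_1\land\cdots\land\Delta\chi_{k+1}\to\Delta(\chi_1\land\cdots\land\chi_{k+1})$, completing the induction.

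There is no real obstacle here: the argument is a routine induction, and the only points worth a line of care are (i) that the $\Delta$- and $\circ$-forms of A4 and A5 are genuinely available, being pure propositional consequences of the stated axioms via A0 and MP, and (ii) that the congruence rule R3 (resp. R4) is what lets us re-associate the nested conjunction inside the scope of $\Delta$ (resp. $\circ$) at each step so as to match whatever bracketing convention is fixed for $\chi_1\land\cdots\land\chi_k$; under a left-associating convention this last step is in fact trivial, but it is cleanest to invoke R3/R4 explicitly. Note that Fact~\ref{fact.circtodelta} is not needed for this proof.
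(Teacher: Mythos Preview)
Your proof is correct and is exactly the routine induction one would expect; the paper itself states this Fact without proof, treating it as immediate from the $\Delta$- and $\circ$-forms of A4 and A5. There is nothing to compare: your argument is the natural elaboration of what the paper leaves implicit.
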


\begin{proposition}\label{prop.useful-1} For all $n\geq 1$,
$$\vdash \Delta(\bigwedge^n_{k=1}\chi_k\to\phi)\land\bigwedge^n_{k=1}\Delta\chi_k\land\bigwedge^n_{k=1}\circ(\phi\to\chi_k)\to\phi\vee\Delta\phi.$$
\end{proposition}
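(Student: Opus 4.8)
The plan is to treat this as a pure derivability statement and grind it down, via one case split and then the fact that $\Delta$ in $\mathbf{K}^{\nabla\bullet}$ is closed under Boolean combinations. Write $C$ for $\bigwedge_{k=1}^n\chi_k$, so the antecedent reads $\Delta(C\to\phi)\land\bigwedge_{k=1}^n\Delta\chi_k\land\bigwedge_{k=1}^n\circ(\phi\to\chi_k)$ and we must derive $\phi\vee\Delta\phi$. Since $\vdash\phi\to(\phi\vee\Delta\phi)$ is an instance of A0, by propositional reasoning it suffices to establish
$$\vdash\neg\phi\land\Delta(C\to\phi)\land\bigwedge_{k=1}^n\Delta\chi_k\land\bigwedge_{k=1}^n\circ(\phi\to\chi_k)\to\Delta\phi .$$
Semantically this is the observation that if $\phi$ fails at $s$ but $\Delta\phi$ fails too, then some successor $t$ verifies $\phi$, hence all $\chi_k$ (because $\circ(\phi\to\chi_k)$ propagates the vacuously true $\phi\to\chi_k$), hence all successors verify each $\chi_k$ (by $\Delta\chi_k$) and agree on $C\to\phi$ (by $\Delta(C\to\phi)$), forcing every successor of $s$ to verify $\phi$ — contradiction.

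The first move in the derivation is to convert the $\circ$-conjuncts into $\Delta$-conjuncts, and this is exactly where the extra hypothesis $\neg\phi$ is used. From $\neg\phi$ we get $\vdash\neg\phi\to(\phi\to\chi_k)$ (tautology), so $\phi\to\chi_k$; combining with $\circ(\phi\to\chi_k)$ and Fact~\ref{fact.circtodelta} instantiated at $\phi\to\chi_k$ yields $\Delta(\phi\to\chi_k)$ for each $k$. By Fact~\ref{fact.multi-conjuncts} (the $\Delta$-part) we obtain $\Delta\big(\bigwedge_{k=1}^n(\phi\to\chi_k)\big)$, and then R3 applied to the tautology $\bigwedge_{k=1}^n(\phi\to\chi_k)\lra(\phi\to C)$ gives $\Delta(\phi\to C)$. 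Similarly Fact~\ref{fact.multi-conjuncts} turns $\bigwedge_{k=1}^n\Delta\chi_k$ into $\Delta C$, and $\Delta(C\to\phi)$ is literally one of the conjuncts of the antecedent. Thus it remains to prove
$$\vdash\Delta(\phi\to C)\land\Delta C\land\Delta(C\to\phi)\to\Delta\phi .$$

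For this last step I would first record the derived schema $\vdash\Delta\alpha\land\Delta\beta\to\Delta(\alpha\to\beta)$: using R3 on $(\alpha\to\beta)\lra\neg(\alpha\land\neg\beta)$ reduces the goal to $\Delta\neg(\alpha\land\neg\beta)$; A2 reduces this to $\Delta(\alpha\land\neg\beta)$; Fact~\ref{fact.multi-conjuncts} with $k=2$ reduces it to $\Delta\alpha\land\Delta\neg\beta$; and $\Delta\neg\beta$ follows from $\Delta\beta$ by A2. Applying this schema with $\alpha:=\phi\to C$ and $\beta:=C$ yields $\Delta\big((\phi\to C)\to C\big)$; combining with $\Delta(C\to\phi)$ via Fact~\ref{fact.multi-conjuncts} ($k=2$) yields $\Delta\big(((\phi\to C)\to C)\land(C\to\phi)\big)$; and since $\vdash\phi\lra\big(((\phi\to C)\to C)\land(C\to\phi)\big)$ is a tautology, R3 converts this into $\Delta\phi$. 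Reassembling the case split ($\phi$ gives the consequent directly; $\neg\phi$ gives $\Delta\phi$, hence the consequent) by propositional logic finishes the proof.

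I expect the only non-mechanical points to be: (a) spotting that $\neg\phi$ is precisely what licenses passing from each $\circ(\phi\to\chi_k)$ to $\Delta(\phi\to\chi_k)$ through Fact~\ref{fact.circtodelta}; and (b) producing the Boolean identity $\phi\lra\big(((\phi\to C)\to C)\land(C\to\phi)\big)$ together with the auxiliary "$\Delta$ respects $\to$" schema. Neither is a deep obstacle — everything else is routine bookkeeping with Fact~\ref{fact.multi-conjuncts}, A2 and R3 — but the generality in $n$ must be routed entirely through Fact~\ref{fact.multi-conjuncts}, and the propositional tautology in (b) should be double-checked before it is invoked.
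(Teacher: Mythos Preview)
Your proof is correct and follows essentially the same approach as the paper: reduce to the case $\neg\phi$, use Fact~\ref{fact.circtodelta} to pass from the $\circ$-conjuncts to $\Delta$-formulas, then finish by closure of $\Delta$ under Boolean combinations (A2, A4, R3). The only cosmetic differences are that the paper first aggregates the $\circ(\phi\to\chi_k)$ into $\circ(\phi\to\chi)$ via the $\circ$-half of Fact~\ref{fact.multi-conjuncts} and R4 before invoking Fact~\ref{fact.circtodelta} once, and uses the tautology $\neg\phi\lra(\phi\to\chi)\land\neg(\chi\land\phi)$ in the final step, whereas you convert each conjunct first and use $\phi\lra(((\phi\to C)\to C)\land(C\to\phi))$; both routes are equally routine.
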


\begin{proof}
Let $\chi=\bigwedge^n_{k=1}\chi_k$.
We have the following proof sequences:
\[
    \begin{array}{lll}
    (i)&         \neg\phi\to(\phi\to\chi)&\text{A0}\\                                                       (ii)&\circ(\phi\to\chi)\land\neg\phi\to\Delta(\phi\to\chi)&(i),~\text{A0,~Fact~}\ref{fact.circtodelta}\\
    (iii)  &\Delta(\phi\to\chi)\land\Delta(\chi\land\phi)\to\Delta\phi&\text{A0,~A2,~A4,~R3}\\
     (iv)  &\Delta(\chi\to\phi)\land\Delta\chi\to \Delta(\chi\land\phi)&\text{A4,~A0,~R3}\\
     (v)  &\Delta(\chi\to\phi)\land\Delta\chi\land\circ(\phi\to\chi)\to\phi\vee\Delta\phi&(ii)-(iv)\\
     (vi) &\bigwedge^n_{k=1}\Delta\chi_k\to\Delta\chi&\text{Fact~}\ref{fact.multi-conjuncts}\\
     (vii)&\bigwedge^n_{k=1}\circ(\phi\to\chi_k)\to\circ(\phi\to\chi)&\text{Fact~}\ref{fact.multi-conjuncts},\text{~A0,~R4}\\
     (viii)&\Delta(\chi\to\phi)\land\bigwedge^n_{k=1}\Delta\chi_k\land\bigwedge^n_{k=1}\circ(\phi\to\chi_k)\to\phi\vee\Delta\phi&(vi)-(viii)\\
    \end{array}
    \]

\weg{By induction on $n$.

\begin{itemize}
\item $n=1$. We need to show that $\vdash \Delta(\chi_1\to\phi)\land\Delta\chi_1\land\circ(\phi\to\chi_1)\to\phi\vee\Delta\phi.$ For this, we have the following proof sequences:
    \[
    \begin{array}{lll}
    (i)&         \neg\phi\to(\phi\to\chi_1)&\text{A1}\\                                                       (ii)&\circ(\phi\to\chi_1)\land\neg\phi\to\Delta(\phi\to\chi_1)&(i),~\text{A1,~Fact~}\ref{fact.circtodelta}\\
    (iii) &\Delta(\phi\to\chi_1)\land\Delta(\phi\to\neg\chi_1)\to\Delta\neg\phi&\text{A5,~A1,~R2}\\
    (iv)  &\Delta(\phi\to\chi_1)\land\Delta(\chi_1\land\phi)\to\Delta\phi&(iii),~\text{A6,~A1,~R2}\\
     (v)  &\Delta(\chi_1\to\phi)\land\Delta\chi_1\to \Delta(\chi_1\land\phi)&\text{A5,~A1,~R2}\\
     (vi)  &\Delta(\chi_1\to\phi)\land\Delta\chi_1\land\circ(\phi\to\chi_1)\to\phi\vee\Delta\phi&(ii),~(iv),~(v)\\
    \end{array}
    \]

\item Suppose the statement holds for $n$, i.e.,
$$\vdash \Delta(\bigwedge^n_{k=1}\chi_k\to\phi)\land\bigwedge^n_{k=1}\Delta\chi_k\land\bigwedge^n_{k=1}\circ(\phi\to\chi_k)\to\phi\vee\Delta\phi,$$
to show it also holds for $n+1$, viz.,
$$\vdash \Delta(\bigwedge^{n+1}_{k=1}\chi_k\to\phi)\land\bigwedge^{n+1}_{k=1}\Delta\chi_k\land\bigwedge^{n+1}_{k=1}\circ(\phi\to\chi_k)\to\phi\vee\Delta\phi.$$
We need only prove that
$$\vdash \Delta(\bigwedge^{n+1}_{k=1}\chi_k\to\phi)\land\bigwedge^{n+1}_{k=1}\Delta\chi_k\land\circ(\phi\to\chi_{n+1})\land\neg\phi\to\Delta(\bigwedge^n_{k=1}\chi_k\to\phi).$$
\end{itemize}}
\end{proof}

\begin{proposition}\label{prop.useful-2} For all $n\geq 1$,
$$\vdash \Delta(\bigwedge^n_{k=1}\chi_k\to\phi)\land\bigwedge^n_{k=1}\circ(\neg\phi\to\chi_k)\land\phi\to\Delta\phi.$$
\end{proposition}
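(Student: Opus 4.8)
The plan is to reduce everything to two propositional equivalences and then apply the $\Delta$-conjunction principle. Write $\chi=\bigwedge_{k=1}^n\chi_k$, as in the proof of Proposition~\ref{prop.useful-1}. The key observation is the tautological equivalence $\vdash\big((\neg\phi\to\chi)\land(\chi\to\phi)\big)\lra\phi$: indeed $(\neg\phi\to\chi)$ is $\phi\vee\chi$, $(\chi\to\phi)$ is $\neg\chi\vee\phi$, and their conjunction simplifies to $\phi\vee(\chi\land\neg\chi)$, i.e. to $\phi$. So if we can derive both $\Delta(\neg\phi\to\chi)$ and $\Delta(\chi\to\phi)$ under the hypotheses, we are done via Fact~\ref{fact.multi-conjuncts} (the $\Delta$-version with $k=2$) together with R3 and A0.

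The conjunct $\Delta(\chi\to\phi)$ is literally the first hypothesis, so there is nothing to do there. For $\Delta(\neg\phi\to\chi)$ I would argue: from $\phi$ we get the tautological consequence $\neg\phi\to\chi$ (vacuously); combining this with $\circ(\neg\phi\to\chi)$ and Fact~\ref{fact.circtodelta} (i.e. $\vdash\circ\psi\land\psi\to\Delta\psi$) yields $\Delta(\neg\phi\to\chi)$. It remains to obtain $\circ(\neg\phi\to\chi)$ from $\bigwedge_{k=1}^n\circ(\neg\phi\to\chi_k)$, and this is exactly Fact~\ref{fact.multi-conjuncts} (the $\circ$-version) chained with R4 and A0, using the tautology $\bigwedge_{k=1}^n(\neg\phi\to\chi_k)\lra(\neg\phi\to\chi)$.

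Assembling, we get a short proof sequence of six lines: (i) $\phi\to(\neg\phi\to\chi)$ by A0; (ii) $\circ(\neg\phi\to\chi)\land\phi\to\Delta(\neg\phi\to\chi)$ from (i), Fact~\ref{fact.circtodelta} and A0; (iii) $\bigwedge_{k=1}^n\circ(\neg\phi\to\chi_k)\to\circ(\neg\phi\to\chi)$ by Fact~\ref{fact.multi-conjuncts}, R4 and A0; (iv) $\bigwedge_{k=1}^n\circ(\neg\phi\to\chi_k)\land\phi\to\Delta(\neg\phi\to\chi)$ from (ii) and (iii); (v) $\Delta(\neg\phi\to\chi)\land\Delta(\chi\to\phi)\to\Delta\phi$ from Fact~\ref{fact.multi-conjuncts} with $k=2$, the equivalence above and R3; (vi) the desired formula from (iv), (v) and A0. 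I do not expect a genuine obstacle here; the only points requiring care are stating the propositional equivalences correctly and routing the iterated conjunction uniformly through Fact~\ref{fact.multi-conjuncts}, so that no separate induction on $n$ is needed (which would only lengthen the argument).
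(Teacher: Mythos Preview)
Your proof is correct and follows essentially the same route as the paper's: both derive $\Delta(\neg\phi\to\chi)$ from the hypotheses $\phi$ and $\bigwedge_k\circ(\neg\phi\to\chi_k)$, then combine it with the given $\Delta(\chi\to\phi)$ via the tautology $(\neg\phi\to\chi)\land(\chi\to\phi)\lra\phi$ (the paper phrases this through the contrapositive $\Delta(\neg\phi\to\chi)\lra\Delta(\neg\chi\to\phi)$ and then $(\chi\to\phi)\land(\neg\chi\to\phi)\lra\phi$). The only cosmetic difference is that you aggregate on the $\circ$-side first (Fact~\ref{fact.multi-conjuncts}, $\circ$-version) and then apply Fact~\ref{fact.circtodelta} once, whereas the paper applies Fact~\ref{fact.circtodelta} componentwise and aggregates on the $\Delta$-side; neither ordering buys anything over the other.
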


\begin{proof}
We have the following proof sequences:
\[
\begin{array}{lll}
(i) & \phi\to \bigwedge_{k=1}^n(\neg\phi\to\chi_k)& \text{A0}\\
(ii)&\bigwedge^n_{k=1}\circ(\neg\phi\to\chi_k)\land\phi\to \bigwedge_{k=1}^n\Delta(\neg\phi\to\chi_k)&(i),~\text{A0,~Fact~\ref{fact.circtodelta}}\\
(iii)&\bigwedge_{k=1}^n\Delta(\neg\phi\to\chi_k)\to\Delta(\neg\phi\to\bigwedge_{k=1}^n\chi_k)&\text{A4,~A0,~R3}\\
(iv)&\Delta(\neg\phi\to\bigwedge_{k=1}^n\chi_k)\lra \Delta(\neg\bigwedge_{k=1}^n\chi_k\to\phi)&\text{A0,~R3}\\
(v)&\Delta(\bigwedge^n_{k=1}\chi_k\to\phi)\land\Delta(\neg\bigwedge_{k=1}^n\chi_k\to\phi)\to\Delta\phi&\text{A4,~A0,~R3}\\
(vi)&\Delta(\bigwedge^n_{k=1}\chi_k\to\phi)\land\bigwedge^n_{k=1}\circ(\neg\phi\to\chi_k)\land\phi\to\Delta\phi&(ii)-(v)\\
\end{array}
\]
\end{proof}

\begin{proposition}\label{prop.useful-3} For all $n\geq 1$,
$$\vdash \circ(\bigwedge^n_{k=1}\chi_k\to\phi)\land\bigwedge^n_{k=1}\circ(\neg\phi\to\chi_k)\to\circ\phi.$$
\end{proposition}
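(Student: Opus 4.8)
The plan is to prove this by the same propositional-bookkeeping strategy already used for Propositions~\ref{prop.useful-1} and~\ref{prop.useful-2}, only now working entirely with $\circ$ instead of $\Delta$. Abbreviate $\chi=\bigwedge^n_{k=1}\chi_k$. Two closure properties of $\circ$ do all the work: (a) $\circ$ is closed under conjunction, i.e. $\vdash\circ\alpha_1\land\cdots\land\circ\alpha_m\to\circ(\alpha_1\land\cdots\land\alpha_m)$, which is the second half of Fact~\ref{fact.multi-conjuncts} (ultimately A5); and (b) $\circ$ respects provable equivalence, i.e. from $\vdash\alpha\lra\beta$ one infers $\vdash\circ\alpha\lra\circ\beta$ by R4. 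Since the formula to be derived merely rearranges finite conjunctions and disjunctions inside the scope of $\circ$, neither the bridge axioms A6, A7 nor Fact~\ref{fact.circtodelta} will be needed; in particular, unlike the $\Delta$-flavoured Propositions~\ref{prop.useful-1} and~\ref{prop.useful-2}, there is here no $\phi$ in the antecedent and hence no corresponding appeal to Fact~\ref{fact.circtodelta}.

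Concretely I would carry out two reductions and then chain them. First, from $\bigwedge^n_{k=1}\circ(\neg\phi\to\chi_k)$ I would derive $\circ(\neg\chi\to\phi)$: by (a) this conjunction yields $\circ\big(\bigwedge^n_{k=1}(\neg\phi\to\chi_k)\big)$, and since $\vdash\bigwedge^n_{k=1}(\neg\phi\to\chi_k)\lra(\neg\chi\to\phi)$ is a tautology (it passes through $\neg\phi\to\chi$ by contraposition), (b) rewrites this as $\circ(\neg\chi\to\phi)$. Second, from $\circ(\chi\to\phi)\land\circ(\neg\chi\to\phi)$ I would derive $\circ\phi$: by (a), with $m=2$, this yields $\circ\big((\chi\to\phi)\land(\neg\chi\to\phi)\big)$, and since $\vdash(\chi\to\phi)\land(\neg\chi\to\phi)\lra\phi$ is a tautology, (b) rewrites this as $\circ\phi$. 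Keeping the spare conjunct $\circ(\chi\to\phi)=\circ(\bigwedge^n_{k=1}\chi_k\to\phi)$ around to feed the second reduction, the composition of the two implications is exactly
$$\vdash \circ(\bigwedge^n_{k=1}\chi_k\to\phi)\land\bigwedge^n_{k=1}\circ(\neg\phi\to\chi_k)\to\circ\phi.$$

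I do not expect a genuine obstacle here: the derivation is a short sequence of applications of Fact~\ref{fact.multi-conjuncts}, R4 and propositional reasoning (A0), structurally parallel to steps (i)--(vi) in the proof of Proposition~\ref{prop.useful-2} with every $\Delta$ replaced by $\circ$ and the $\phi$-hypothesis dropped. The only point requiring a little care is the bookkeeping of the finite conjunctions: one must confirm that $\bigwedge^n_{k=1}(\neg\phi\to\chi_k)$ and $\neg\chi\to\phi$ (with $\chi=\bigwedge^n_{k=1}\chi_k$) are propositionally equivalent for each fixed $n$, which is immediate, so that R4 applies, and that Fact~\ref{fact.multi-conjuncts} is invoked with the correct arity ($m=n$ in the first reduction, $m=2$ in the second). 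A fully written-out proof would simply list these half-dozen lines in the same table format used for Propositions~\ref{prop.useful-1} and~\ref{prop.useful-2}.
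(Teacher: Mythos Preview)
Your proposal is correct and follows essentially the same approach as the paper: the paper's proof is precisely your two reductions chained together, with step (i) deriving $\circ(\neg\chi\to\phi)$ from $\bigwedge^n_{k=1}\circ(\neg\phi\to\chi_k)$ via Fact~\ref{fact.multi-conjuncts}, A0 and R4, and step (ii) deriving $\circ\phi$ from $\circ(\chi\to\phi)\land\circ(\neg\chi\to\phi)$ via A5, A0 and R4. Your observation that Fact~\ref{fact.circtodelta} is not needed here is also accurate.
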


\begin{proof}
We have the following proof sequences:
\[
\begin{array}{lll}
(i)&\bigwedge^n_{k=1}\circ(\neg\phi\to\chi_k)\to\circ(\neg\bigwedge^n_{k=1}\chi_k\to\phi)&\text{Fact~\ref{fact.multi-conjuncts},~A0,~R4}\\
(ii)&\circ(\bigwedge^n_{k=1}\chi_k\to\phi)\land\circ(\neg\bigwedge^n_{k=1}\chi_k\to\phi)\to\circ\phi&\text{A5,~A0,~R4}\\
(iii)&\circ(\bigwedge^n_{k=1}\chi_k\to\phi)\land\bigwedge^n_{k=1}\circ(\neg\phi\to\chi_k)\to\circ\phi&(i),~(ii)\\
\end{array}
\]
\end{proof}

We are now in a position to define the desired canonical model. The following definition is inspired by the schema {\bf NAD}.
\begin{definition}[Canonical Model] \label{def.cm}
$\M_c=\lr{S_c,R_c,V_c}$ is the {\em canonical model} of ${\bf K^{\nabla\bullet}}$, if
\begin{itemize}
\item $S_c=\{s\mid s\text{ is a maximal consistent set for }{\bf K^{\nabla\bullet}}\}$,

\item $sR_ct$ iff there exists $\psi$ such that $(a)$ $\bullet\psi\in s$, and $(b)$ for all $\phi$, if $\Delta\phi\land\circ(\neg\psi\to\phi)\in s$, then $\phi\in t$,

\item $V^c(p)=\{s\in S_c\mid p\in s\}$.
\end{itemize}
\end{definition}

\begin{lemma}[Truth Lemma]\label{lem.truthlem-mc}
For all $\phi\in \mathcal{L}(\nabla,\bullet)$, for all $s\in S_c$, we have
$$\M_c,s\vDash\phi\iff \phi\in s.$$
\end{lemma}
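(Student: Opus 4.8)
The plan is to prove the Truth Lemma by induction on the structure of $\phi$, with the Boolean cases being routine (using the maximal consistency of $s$) and the two modal cases $\nabla\phi$ and $\bullet\phi$ being the substance. Before starting, I would record the key property that makes the canonical relation tick, which is essentially the semantic content of {\bf NAD}: by definition, $sR_ct$ holds iff there is a $\psi$ with $\bullet\psi\in s$ such that $t$ contains every $\phi$ with $\Delta\phi\land\circ(\neg\psi\to\phi)\in s$. I expect to need the following ``existence/saturation'' auxiliary claim, proved with a Lindenbaum-style argument: if $\bullet\psi\in s$ and $\chi\notin\{\phi : \Delta\phi\land\circ(\neg\psi\to\phi)\in s\}$-provably-forced, then there is a $t\in S_c$ with $sR_ct$ and $\neg\chi\in t$ (more precisely, for suitable $\chi$ we can find a successor refuting $\chi$); the consistency check for the relevant set $\{\neg\chi\}\cup\{\phi : \Delta\phi\land\circ(\neg\psi\to\phi)\in s\}$ is exactly where Propositions~\ref{prop.useful-1}, \ref{prop.useful-2}, \ref{prop.useful-3} get used, since a finite inconsistency there yields, via those propositions, a derivation forcing $\Delta\chi$ or $\circ\chi$-type conclusions into $s$, contradicting the choice of $\chi$.

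For the case $\bullet\phi$: the direction $\bullet\phi\in s \Rightarrow \M_c,s\vDash\bullet\phi$ requires showing $\phi\in s$ (immediate from A1 and maximal consistency) and producing a successor $t$ with $sR_ct$ and $\phi\notin t$. Here I would take $\psi:=\phi$ itself as the witness in the definition of $R_c$ and must check that the set $\{\neg\phi\}\cup\{\chi : \Delta\chi\land\circ(\neg\phi\to\chi)\in s\}$ is consistent; if it were inconsistent, finitely many $\chi_1,\dots,\chi_n$ with $\Delta\chi_k\land\circ(\neg\phi\to\chi_k)\in s$ would give $\vdash \bigwedge\chi_k\to\phi$, hence $\vdash \Delta(\bigwedge\chi_k\to\phi)$ by R1 (actually $\vdash\circ(\bigwedge\chi_k\to\phi)$ by R2 too), and then Proposition~\ref{prop.useful-3} combined with $\bullet\phi\in s$ (so $\circ(\neg\phi\to\phi)$-type facts, i.e. $\circ\phi\in s$ fails — wait, rather $\neg\circ\phi\in s$) yields a contradiction with $\neg\circ\phi\in s$; I will need to be careful to track exactly which of Props~\ref{prop.useful-1}--\ref{prop.useful-3} matches. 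Conversely, for $\M_c,s\vDash\bullet\phi \Rightarrow \bullet\phi\in s$: we have $\phi\in s$ (by IH) and some $t$ with $sR_ct$, $\phi\notin t$, so by IH $\neg\phi\in t$; unpacking $sR_ct$ gives a $\psi$ with $\bullet\psi\in s$ such that $\neg\phi\notin\{\chi:\Delta\chi\land\circ(\neg\psi\to\chi)\in s\}$ forced into $t$ — since $\neg\phi\in t$ we get $\Delta\phi\land\circ(\neg\psi\to\phi)\notin s$, and from $\bullet\psi\in s$ plus $\phi\in s$ and axioms A3, A1 (and Fact~\ref{fact.circtodelta}) I would derive $\bullet\phi\in s$; the precise syntactic manipulation using A3 ($\bullet(\psi'\to\phi)\land\phi\to\bullet\phi$) is the delicate bookkeeping.

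For the case $\nabla\phi$: using A2 we may reduce to working with $\Delta\phi$ if convenient. For $\nabla\phi\in s \Rightarrow \M_c,s\vDash\nabla\phi$ we need two successors $t,u$ with $\phi\in t$, $\phi\notin u$ (equivalently $\neg\phi\in u$). By A6, $\nabla\phi\in s$ gives $\bullet\phi\in s$ or $\bullet\neg\phi\in s$; in the first case $\bullet\phi\in s$ supplies (as in the $\bullet$ case) a successor $u$ with $\neg\phi\in u$, and we separately need a successor $t$ with $\phi\in t$ — this is where we must also show consistency of $\{\phi\}\cup\{\chi:\Delta\chi\land\circ(\neg\psi\to\chi)\in s\}$ for an appropriate $\psi$ (take $\psi$ witnessing $\bullet\phi\in s$), invoking Proposition~\ref{prop.useful-1} or \ref{prop.useful-2} to rule out inconsistency via $\neg\phi$, $\Delta\phi\in s$. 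The symmetric subcase $\bullet\neg\phi\in s$ is handled the same way with $\phi$ and $\neg\phi$ swapped (using A2). Conversely, $\M_c,s\vDash\nabla\phi\Rightarrow\nabla\phi\in s$: from two successors with differing $\phi$-values and IH we get $t\ni\phi$, $u\ni\neg\phi$; unpacking $sR_cu$ yields $\psi$ with $\bullet\psi\in s$ and $\Delta\phi\land\circ(\neg\psi\to\phi)\notin s$ (since $\neg\phi\in u$); then A7 ($\bullet(\phi\to\sigma)\land\bullet(\neg\phi\to\tau)\to\nabla\phi$) together with A3 is the tool to push $\nabla\phi$ into $s$ — again the matching of shapes is where care is required.

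The main obstacle, as is typical for such completeness proofs, is the \emph{existence lemma}: verifying that the various sets of the form $\{\pm\phi\}\cup\{\chi:\Delta\chi\land\circ(\neg\psi\to\chi)\in s\}$ are consistent whenever the corresponding modal formula is in $s$. Each such verification is a contrapositive argument: assume finite inconsistency, extract a provable implication, close it off under R1/R2 to a $\Delta$- or $\circ$-prefixed theorem, and then apply precisely one of Propositions~\ref{prop.useful-1}, \ref{prop.useful-2}, \ref{prop.useful-3} (these three propositions were evidently engineered for exactly these three situations) together with the relevant bridge axiom (A3, A6, or A7) to contradict the membership assumption. Getting the indexing of the $\chi_k$'s and the exact antecedent shapes to line up with the statements of those propositions is the fiddly part; everything else is standard maximal-consistent-set bookkeeping.
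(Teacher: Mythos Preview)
Your proposal is correct and tracks the paper's proof closely: the same witness $\psi:=\phi$ in the $\bullet\phi$ forward direction, Propositions~\ref{prop.useful-1}--\ref{prop.useful-3} for the three consistency checks (Prop.~\ref{prop.useful-1} for $\Gamma_1=\{\phi\}\cup\{\chi:\Delta\chi\land\circ(\neg\phi\to\chi)\in s\}$, Prop.~\ref{prop.useful-2} for $\Gamma_2$ with $\neg\phi$, Prop.~\ref{prop.useful-3} for the $\bullet$-case), A6/A7 as the bridges for $\nabla$, and A3 with Fact~\ref{fact.circtodelta} for the $\bullet$ converse.

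One spot needs tightening. In the $\nabla\phi$ direction $\Longrightarrow$ you only unpack $sR_cu$, obtaining (under the contradiction hypothesis $\Delta\phi\in s$) that $\circ(\neg\psi\to\phi)\notin s$, i.e.\ $\bullet(\neg\phi\to\psi)\in s$ after R4. But A7 requires \emph{two} conjuncts, one of shape $\bullet(\phi\to\sigma)$ and one of shape $\bullet(\neg\phi\to\tau)$, and A3 will not manufacture the missing $\bullet(\phi\to\sigma)$ from what you have. You must also unpack $sR_ct$: since $\neg\phi\notin t$ and $\Delta\neg\phi\in s$ (by A2), the witness $\psi'$ for $sR_ct$ gives $\circ(\neg\psi'\to\neg\phi)\notin s$, hence $\bullet(\phi\to\psi')\in s$. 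With both conjuncts in hand, A7 delivers $\nabla\phi\in s$ directly; A3 plays no role in this direction. Apart from this, your plan is the paper's.
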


\begin{proof}
By induction on $\phi$. The nontrivial cases are $\nabla\phi$ and $\bullet\phi$.
\begin{itemize}
\item Case $\nabla\phi$.

      `$\Longleftarrow$': suppose that $\nabla\phi\in s$, by IH, it suffices to find two successors $t_1,t_2$ in $S_c$ of $s$ such that $\phi\in t_1$ and $\neg\phi\in t_2$. By supposition and axiom A6, $\bullet\phi\vee\bullet\neg\phi\in s$, then $\bullet\phi\in s$ or $\bullet\neg\phi\in s$. We consider only the first case, i.e. $\bullet\phi\in s$ (thus $\phi\in s$ by A1), since the second case is similar.
         In the first case, we show that $\{\chi\mid \Delta\chi\land\circ(\neg\phi\to\chi)\in s\}\cup\{\phi\}$ and $\{\chi\mid \Delta\chi\land\circ(\neg\phi\to\chi)\in s\}\cup\{\neg\phi\}$ are both consistent. We denote the two sets $\Gamma_1,\Gamma_2$, respectively.

         If $\Gamma_1$ is inconsistent, then there are $\chi_1,\cdots,\chi_n$ such that $\vdash\chi_1\land\cdots\land\chi_n\to\neg\phi$, and $\Delta\chi_k\land\circ(\neg\phi\to\chi_k)\in s$ for all $1\leq k\leq n$. By R1, we obtain $\Delta(\chi_1\land\cdots\land\chi_n\to\neg\phi)\in s$. Since we have also $\phi\in s$, Prop.~\ref{prop.useful-1} implies that $\Delta\phi\in s$, contradicting the supposition. Similarly, applying Prop.~\ref{prop.useful-2} we can show that $\Gamma_2$ is consistent.

         \medskip

     `$\Longrightarrow$': assume, for a contradiction, that $\M_c,s\vDash\nabla\phi$ but $\nabla\phi\notin s$ (namely $\Delta\phi\in s$). By assumption, there exist $t,u\in S^c$ such that $sR_ct$, $sR_cu$ and $\phi\in t$, $\phi\notin u$. From $sR_ct$ it follows that there exists $\psi$ such that $\bullet\psi\in s$ and, for all $\alpha$, if $\Delta\alpha\land\circ(\neg\psi\to\alpha)\in s$, then $\alpha\in t$. Since $\neg\phi\notin t$ and $\Delta\neg\phi\in s$, we obtain that $\circ(\neg\psi\to\neg\phi)\notin s$, i.e. $\bullet(\phi\to\psi)\in s$. Similarly, from $sR_cu$ it follows that there exists $\chi$ such that $\bullet(\neg\phi\to\chi)\in s$. Then by axiom A7, we conclude that $\nabla\phi\in s$: a contradiction.

\item Case $\bullet\phi$.

      `$\Longleftarrow$': suppose that $\bullet\phi\in s$, then by Axiom A1, $\phi\in s$. By IH, we only need to find a $t\in S_c$ with $sR_ct$ and $\neg\phi\in t$. For this, it suffice to show that $\{\chi\mid \Delta\chi\land\circ(\neg\phi\to\chi)\in s\}\cup\{\neg\phi\}$ is consistent.

      If not, then there are $\chi_1,\cdots,\chi_m$ such that $\vdash\chi_1\land\cdots\land\chi_m\to\phi$, and $\Delta\chi_j\land\circ(\neg\phi\to\chi_j)\in s$ for all $1\leq j\leq m$. By R2, $\vdash\circ(\chi_1\land\cdots\land\chi_m\to\phi)$. Then using Prop.~\ref{prop.useful-3}, we infer $\circ\phi\in s$, contrary to the supposition, as desired.

      \medskip

      `$\Longrightarrow$': assume, for a contradiction, that $\M_c,s\vDash\bullet\phi$ but $\bullet\phi\notin s$ (i.e. $\circ\phi\in s$). Then by IH, $\phi\in s$ and there is a $t\in S_c$ such that $sR_ct$ and $\phi\not\in t$. From $sR_ct$ it follows that there exists $\chi$ such that $\bullet\chi\in s$ and, for all $\beta$, if $\Delta\beta\land\circ(\neg\chi\to\beta)\in s$, then $\beta\in t$. Since $\phi\notin t$, then $\Delta\phi\land\circ(\neg\chi\to\phi)\notin s$. However, from $\circ\phi\in s$ and $\phi\in s$, we obtain $\Delta\phi\in s$ by Fact~\ref{fact.circtodelta} and $\circ(\neg\chi\to\phi)\in s$ by axiom A3, which is a contradiction.
\end{itemize}
\end{proof}

Now it is a routine exercise to obtain the following.
\begin{theorem}
${\bf K^{\nabla\bullet}}$ is sound and strongly complete with respect to the class of all frames.
\end{theorem}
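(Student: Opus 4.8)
The plan is to derive soundness and strong completeness of $\mathbf{K}^{\nabla\bullet}$ from the material already developed. For soundness, I would simply remark that each axiom schema A0--A7 has been shown valid earlier (A1 is immediate from the semantics of $\bullet$; A2 from the symmetry of the defining condition for $\nabla$; A3 is the contrapositive of the equivalence $\circ\phi\land\phi\to\circ(\psi\to\phi)$ noted after the axiom list; A4 and A5 are the familiar distribution facts; A6 and A7 are exactly Proposition~\ref{prop.bridge-axioms}(1) and (2)), and that each rule R1--R4 and MP preserves validity (R1, R2 are the standard necessitation-style rules for $\Delta$ and $\circ$; R3, R4 are congruence rules, and validity of $\phi\lra\psi$ clearly yields validity of $\Delta\phi\lra\Delta\psi$ and $\circ\phi\lra\circ\psi$ from the semantic clauses). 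Hence every theorem of $\mathbf{K}^{\nabla\bullet}$ is valid on all frames, which gives soundness.

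For strong completeness it suffices, as usual, to show every $\mathbf{K}^{\nabla\bullet}$-consistent set $\Gamma$ is satisfiable. Extend $\Gamma$ to a maximal consistent set by Lindenbaum, so $\Gamma$ appears as a world $s$ in the canonical model $\M_c$ of Definition~\ref{def.cm}; then by the Truth Lemma (Lemma~\ref{lem.truthlem-mc}), $\M_c,s\vDash\Gamma$. Since $\M_c$ is a model based on an (arbitrary) frame, and the class of all frames imposes no constraint on $R_c$, this already yields strong completeness with respect to the class of all frames. So the theorem follows by combining soundness with the Truth Lemma and Lindenbaum's lemma; this is the ``routine exercise'' alluded to in the text, and I would state it in a couple of lines rather than spelling out the Lindenbaum construction.

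The only genuine subtlety worth flagging — and the step I would be most careful about — is that the Truth Lemma as proved does not by itself guarantee that $R_c$ is serial or has any other property, so one must be content with completeness over \emph{all} frames here (the serial, transitive, and reflexive cases are handled in the later sections by adding axioms and refining the canonical relation). In particular, one should double-check that the definition of $R_c$ is not vacuously empty in a way that would break the $\nabla$/$\bullet$ cases: but the ``$\Longleftarrow$'' directions of the Truth Lemma already establish, whenever $\bullet\psi\in s$, the existence of successors of $s$ of the required kind, so $R_c$ behaves correctly exactly where the semantics demands witnesses. No further obstacle remains; the proof is essentially an assembly of Lemma~\ref{lem.truthlem-mc} with the standard canonical-model argument.
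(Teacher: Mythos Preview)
Your proposal is correct and follows essentially the same approach as the paper: the paper simply writes ``Now it is a routine exercise to obtain the following'' immediately after the Truth Lemma, leaving soundness implicit and deriving strong completeness from Lindenbaum's lemma together with Lemma~\ref{lem.truthlem-mc}, exactly as you describe. Your additional remarks about $R_c$ not needing any special properties here are accurate and anticipate the later sections, but add nothing the paper requires for this theorem.
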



\section{Extensions}
\subsection{Serial system}\label{sec.serial}

We will show that ${\bf K^{\nabla\bullet}}$ also axiomatize the class of serial frames. This result cannot follow from the truth lemma directly, since the canonical relation $R_c$ in Def.~\ref{def.cm} is not necessarily serial. This is indeed the case when all formulas of the form $\circ\psi$ are included in the states in $\M_c$, so that these states have no $R_c$-successors. We call such states `dead ends w.r.t. $R_c$'. So we need to transform $\M_c$ into a serial model, whereas the truth value of each formula at each state is preserved. For this, we follow the strategy called `reflexivizing the dead ends' introduced in~\cite[p.~226]{Humberstone95} and~\cite[p.~89]{Fanetal:2015}.
\begin{theorem}
${\bf K^{\nabla\bullet}}$ is sound and strongly complete with respect to the class of serial frames.
\end{theorem}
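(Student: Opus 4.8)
The plan is to establish soundness and completeness separately, reusing the canonical model machinery already developed for the all-frames case. Soundness is the easy half: since every serial frame is a frame, and $\mathbf{K}^{\nabla\bullet}$ is sound over all frames, it is sound over the class of serial frames; one only needs to note that no additional axioms are being added, so nothing new has to be checked. Completeness is where the work lies, and the strategy follows the ``reflexivizing the dead ends'' technique of Humberstone and of Fan, Wang and van Ditmarsch cited in the text.

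In more detail, I would start from the canonical model $\M_c=\lr{S_c,R_c,V_c}$ of Definition~\ref{def.cm}, together with its Truth Lemma (Lemma~\ref{lem.truthlem-mc}). The obstacle, as the text flags, is that $R_c$ need not be serial: a state $s$ is a \emph{dead end} exactly when it has no $R_c$-successor, which by the definition of $R_c$ happens precisely when there is no $\psi$ with $\bullet\psi\in s$, i.e. when $\circ\psi\in s$ for every formula $\psi$ (equivalently $\bullet\psi \notin s$ for all $\psi$). I would define $\M_c^s=\lr{S_c,R_c^s,V_c}$ where $R_c^s=R_c\cup\{(s,s)\mid s\text{ is a dead end w.r.t. }R_c\}$; this is manifestly serial, since every non-dead-end already has a successor and every dead end now has itself. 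The heart of the argument is then a new Truth Lemma: for all $\phi\in\mathcal{L}(\nabla,\bullet)$ and all $s\in S_c$, $\M_c^s,s\vDash\phi\iff\phi\in s$.

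This new Truth Lemma I would prove by induction on $\phi$, Boolean cases being routine, and the modal cases being where one must check that adding the reflexive loop at a dead end $s$ does not change any truth values. The key points: at a dead end $s$ we have $\circ\psi\in s$ for all $\psi$, hence also (by Fact~\ref{fact.circtodelta}, using $\psi\in s$ when applicable, and A1) $\Delta\psi\in s$ for all $\psi$ — so at $s$ the formulas $\nabla\phi$ and $\bullet\phi$ are never in $s$, and correspondingly one checks semantically that $\M_c^s,s\nvDash\nabla\phi$ and $\M_c^s,s\nvDash\bullet\phi$: for $\bullet\phi$ this uses that the sole successor $s$ of $s$ agrees with $s$ on $\phi$ (by IH), so $\bullet\phi$ fails; for $\nabla\phi$ the sole successor cannot disagree with itself. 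For a non-dead-end $s$, $R_c^s$ and $R_c$ give $s$ exactly the same successor set (the loop, if any was added, was added only at dead ends, which are not among $s$'s successors unless some dead end $t$ has $sR_ct$ — but then $(t,t)\in R_c^s\setminus R_c$ does not affect the successors \emph{of} $s$, only of $t$), so the evaluation of $\nabla\phi,\bullet\phi$ at $s$ is unchanged and the IH plus the old Truth Lemma close the case. Once this Truth Lemma is in hand, completeness follows in the usual way: if $\not\vdash\phi$ then $\neg\phi$ extends to some $s\in S_c$, and $\M_c^s,s\vDash\neg\phi$ on a serial frame, so $\phi$ is not valid over serial frames.

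The main obstacle I anticipate is precisely the bookkeeping in the new Truth Lemma around which states receive the new reflexive arrow and the verification that a dead end's sole (reflexive) successor can never witness $\nabla\phi$ or $\bullet\phi$ — this requires the small computation that $\circ\psi\in s$ for all $\psi$ forces $\nabla\psi,\bullet\psi\notin s$, and the matching semantic observation that a self-loop cannot create contingency or accident. No essentially new axiom is needed, which is the content of the claim that $\mathbf{K}^{\nabla\bullet}$ already axiomatizes seriality.
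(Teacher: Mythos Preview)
Your proposal is correct and follows essentially the same approach as the paper: reflexivize the dead ends of the canonical model and check that truth values are unaffected. The only difference is packaging---the paper proves an invariance lemma $\M_c,s\vDash\phi\iff\M_c^D,s\vDash\phi$ and then invokes the existing Truth Lemma, whereas you re-prove a Truth Lemma for the modified model directly; the substantive content is the same.
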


\begin{proof}
Define $\M_c=\lr{S_c,R_c,V_c}$ as in Def.~\ref{def.cm}, and construct a model $\M_c^D=\lr{S_c,R^D_c,V_c}$ such that $R^D_c=R_c\cup \{(s,s)\mid s\text{ is a dead end w.r.t. } R_c \text{ in }\M_c\}$. Now $\M^D_c$ is serial.

The remainder is to show that the satisfiability of formulas in $\mathcal{L}(\nabla,\bullet)$ is invariant under the tranformation. That is to show, for all $\phi\in\mathcal{L}(\nabla,\bullet)$, for all $s\in S_c$, $\M_c,s\vDash\phi$ iff $\M_c^D,s\vDash\phi$. We need only consider the cases for $\nabla\phi$ and $\bullet\phi$. If $s$ is not a dead end w.r.t. $R_c$, then the claim is obvious; otherwise, we have $\M_c,s\nvDash\nabla\phi$ and $\M_c^D,s\nvDash\nabla\phi$, and also $\M_c,s\nvDash\bullet\phi$ and $\M_c^D,s\nvDash\bullet\phi$.
\end{proof}

\subsection{Transitive system}\label{sec.transitive}

We now consider the proof system of $\mathcal{L}(\nabla,\bullet)$ over transitive frames, which extends ${\bf K^{\nabla\bullet}}$ with the following axiom schemas. We denote the system ${\bf K4^{\nabla\bullet}}$.
\[
\begin{array}{ll}
\text{A4-1}&\Delta\phi\to\Delta\Delta\phi\\

\text{A4-2}&\Delta\phi\to\circ(\psi\to\Delta\phi)\\

\text{A4-3}&\bullet\psi_1\land\Delta \phi\land\circ(\neg\psi_1\to\phi)\to\Delta\circ(\neg\psi_2\to\phi)\\

\text{A4-4}&\bullet\psi_1\land\Delta \phi\land\circ(\neg\psi_1\to\phi)\to\circ(\neg\psi_1\to\circ(\neg\psi_2\to\phi))\\
\end{array}\]

\begin{proposition}
${\bf K4^{\nabla\bullet}}$ is sound with respect to the class of transitive frames.
\end{proposition}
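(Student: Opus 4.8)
The plan is to prove soundness by the standard route: verify that each of the four new axiom schemas A4-1 through A4-4 is valid on every transitive frame, since soundness of the $\mathbf{K}^{\nabla\bullet}$-part and the inference rules is already established. Fix an arbitrary transitive model $\M=\lr{S,R,V}$ with $sRt$ and $tRu$ implying $sRu$, and fix a world $s\in S$. The work is entirely semantic: unwind the truth conditions for $\Delta$, $\circ$, and $\bullet$ given in the excerpt and use transitivity to push quantifiers over successors one level deeper.

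First I would dispatch A4-1 and A4-2, which are the ``easy'' ones. For A4-1, assume $\M,s\vDash\Delta\phi$; to show $\M,s\vDash\Delta\Delta\phi$, take any $t,u$ with $sRt$ and $sRu$ and argue $\M,t\vDash\Delta\phi\iff\M,u\vDash\Delta\phi$ — in fact I would show $\M,t\vDash\Delta\phi$ outright for every such $t$: if $tRv$ and $tRw$ then $sRv$ and $sRw$ by transitivity, so $\M,v\vDash\phi\iff\M,w\vDash\phi$ from $\M,s\vDash\Delta\phi$. For A4-2, assume $\M,s\vDash\Delta\phi$; to get $\M,s\vDash\circ(\psi\to\Delta\phi)$ I need: if $\M,s\vDash\psi\to\Delta\phi$ (which holds since $\M,s\vDash\Delta\phi$) then every $R$-successor $t$ of $s$ satisfies $\psi\to\Delta\phi$, and again $\M,t\vDash\Delta\phi$ follows from transitivity exactly as above.

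Then I would handle A4-3 and A4-4, which share a hypothesis and are the substantive cases. Assume $\M,s\vDash\bullet\psi_1\land\Delta\phi\land\circ(\neg\psi_1\to\phi)$. From $\bullet\psi_1$ we get $\M,s\vDash\psi_1$ and some $t_0$ with $sRt_0$ and $\M,t_0\nvDash\psi_1$. Recall the key move already used in the proof of (NAD) and of Prop.~\ref{prop.tran-defin}: the hypotheses $\Delta\phi\land\circ(\neg\psi_1\to\phi)$ together with $\M,s\vDash\psi_1$ force $\M,t\vDash\phi$ for \emph{every} $R$-successor $t$ of $s$ — indeed $\M,s\vDash\neg\psi_1\to\phi$ vacuously, so $\circ(\neg\psi_1\to\phi)$ gives $\M,t\vDash\neg\psi_1\to\phi$ at each successor $t$, and combined with $\Delta\phi$ (use the witness $t_0$ where $\neg\psi_1$ holds, hence $\phi$ holds) every successor of $s$ satisfies $\phi$, i.e. effectively $\M,s\vDash\Box\phi$. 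Now for A4-4 I must show $\M,s\vDash\circ(\neg\psi_1\to\circ(\neg\psi_2\to\phi))$: since $\M,s\vDash\psi_1$ the antecedent $\neg\psi_1\to\circ(\neg\psi_2\to\phi)$ holds at $s$, so I need every successor $t$ of $s$ to satisfy it; at such $t$, if $\M,t\vDash\neg\psi_1$ I must check $\M,t\vDash\circ(\neg\psi_2\to\phi)$, and by transitivity every successor of $t$ is a successor of $s$, hence satisfies $\phi$, hence satisfies $\neg\psi_2\to\phi$ — done. A4-3 is similar but the outer operator is $\Delta$ rather than $\circ$: I show $\M,t\vDash\circ(\neg\psi_2\to\phi)$ for \emph{every} successor $t$ of $s$ (same transitivity argument: successors of $t$ are successors of $s$, so satisfy $\phi$), and ``every successor satisfies $\circ(\neg\psi_2\to\phi)$'' immediately yields $\M,s\vDash\Delta\circ(\neg\psi_2\to\phi)$.

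The main obstacle, such as it is, is purely bookkeeping: keeping straight that $\circ$ is a \emph{conditional} necessity (its truth at $s$ is automatic when the ``if''-clause fails at $s$) so that one does not need the inner formula to hold at $s$ itself, only at the successors — and in every case the hypothesis either makes the relevant antecedent true at $s$ (because $\M,s\vDash\psi_1$) or is handled by the ``all successors of $s$ satisfy $\phi$'' consequence derived above. There is no clever construction needed; transitivity does all the real work by collapsing ``successor of a successor'' into ``successor''. I would present A4-3 and A4-4 in full and remark that A4-1, A4-2 are immediate, matching the paper's usual level of detail.
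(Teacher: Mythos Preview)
Your proposal is correct and follows essentially the same route as the paper: reduce to the soundness of $\mathbf{K}^{\nabla\bullet}$ and verify the four extra schemas semantically on an arbitrary transitive model, with the key observation (exactly the {\bf NAD} move) that the common hypothesis $\bullet\psi_1\land\Delta\phi\land\circ(\neg\psi_1\to\phi)$ forces every $R$-successor of $s$ to satisfy $\phi$, after which transitivity does the rest. The paper is actually briefer than you: it invokes Prop.~\ref{prop.tran-defin} for A4-4, works out only A4-3 in detail (by contradiction rather than your direct argument, but the content is identical), and leaves A4-1 and A4-2 to the reader.
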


\begin{proof}
By the soundness of ${\bf K^{\nabla\bullet}}$, we only need to show the validity of four extra axiom schemas. Moreover, A4-4 defines transitivity (Prop.~\ref{prop.tran-defin}). By way of illustration, we prove the validity of axiom A4-3. 

Let $\M=\lr{S,R,V}$ be a transitive model and $s\in S$. Assume towards contradiction that $\M,s\vDash\bullet\psi_1\land\Delta \phi\land\circ(\neg\psi_1\to\phi)$ but $\M,s\nvDash\Delta\circ(\neg\psi_2\to\phi)$. Then there are $t,u$ such that $sRt$, $sRu$ and $t\vDash\circ(\neg\psi_2\to\phi)$ and $u\nvDash\circ(\neg\psi_2\to\phi)$, and then there is an $x$ with $uRx$ and $x\vDash\neg\psi_2\land\neg\phi$. Since $s\vDash\bullet\psi_1$, it follows that $s\vDash\psi_1$ and $y\vDash\neg\psi_1$ for some $y$ with $sRy$, thus $s\vDash\neg\psi_1\to\phi$. Combining this and $\M,s\vDash\circ(\neg\psi_1\to\phi)$, we have: all $R$-successors of $s$ satisfy $\neg\psi_1\to\phi$, in particular $y\vDash\neg\psi_1\to\phi$, and hence $y\vDash\phi$. From $sRu$ and $uRx$ and the transitivity of $R$, we have $sRx$. But $x\nvDash\phi$, thus we can obtain $s\nvDash\Delta\phi$, contrary to the assumption.
\end{proof}

\begin{theorem}
${\bf K4^{\nabla\bullet}}$ is strongly complete with respect to the class of transitive frames.
\end{theorem}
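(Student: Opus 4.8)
The plan is to adapt the canonical model construction of Definition~\ref{def.cm} and the Truth Lemma~\ref{lem.truthlem-mc} to the extended system, and then show that the canonical relation $R_c$ (built exactly as in Def.~\ref{def.cm}, but over maximal consistent sets of $\mathbf{K4^{\nabla\bullet}}$) is transitive. Since all axioms A0--A7 and rules R1--R4 are retained, the Truth Lemma goes through verbatim for $\mathbf{K4^{\nabla\bullet}}$: the canonical relation $R_c$ is defined by $sR_ct$ iff there is $\psi$ with $\bullet\psi\in s$ and $\{\chi \mid \Delta\chi\land\circ(\neg\psi\to\chi)\in s\}\subseteq t$, and the $\nabla$- and $\bullet$-cases of the induction are unchanged. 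So strong completeness reduces to verifying that this $R_c$ lives on a transitive frame; then every $\mathbf{K4^{\nabla\bullet}}$-consistent set is satisfiable in a transitive model (a point-generated submodel, if one wants the frame itself transitive, but transitivity of $R_c$ suffices).

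First I would fix $s,t,u\in S_c$ with $sR_ct$ and $tR_cu$, and show $sR_cu$. Unpacking the definitions: there is $\psi_1$ with $\bullet\psi_1\in s$ witnessing $sR_ct$, and there is $\psi_2$ with $\bullet\psi_2\in t$ witnessing $tR_cu$. The claim is that $\psi_1$ also witnesses $sR_cu$ — i.e. $\bullet\psi_1\in s$ (already have it) and for every $\phi$, if $\Delta\phi\land\circ(\neg\psi_1\to\phi)\in s$ then $\phi\in u$. So assume $\Delta\phi\land\circ(\neg\psi_1\to\phi)\in s$. I want to push this through $t$ and then through $u$. Using A4-1 ($\Delta\phi\to\Delta\Delta\phi$) and A4-2 ($\Delta\phi\to\circ(\psi\to\Delta\phi)$) I get $\Delta\Delta\phi\land\circ(\neg\psi_1\to\Delta\phi)\in s$, so since $\psi_1$ witnesses $sR_ct$, $\Delta\phi\in t$. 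Next, A4-3 gives $\bullet\psi_1\land\Delta\phi\land\circ(\neg\psi_1\to\phi)\to\Delta\circ(\neg\psi_2\to\phi)\in s$, hence $\Delta\circ(\neg\psi_2\to\phi)\in s$; and A4-4 gives $\circ(\neg\psi_1\to\circ(\neg\psi_2\to\phi))\in s$. Therefore $\Delta\circ(\neg\psi_2\to\phi)\land\circ(\neg\psi_1\to\circ(\neg\psi_2\to\phi))\in s$, and again because $\psi_1$ witnesses $sR_ct$, we conclude $\circ(\neg\psi_2\to\phi)\in t$. Combined with $\Delta\phi\in t$ (from the first step), we have $\Delta\phi\land\circ(\neg\psi_2\to\phi)\in t$, and since $\psi_2$ witnesses $tR_cu$, this yields $\phi\in u$. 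That is exactly what was needed, so $sR_cu$.

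After transitivity of $R_c$ is established, I would close the argument in the usual way: a $\mathbf{K4^{\nabla\bullet}}$-consistent set $\Sigma$ extends (Lindenbaum) to some $s\in S_c$; by the Truth Lemma $\M_c,s\vDash\Sigma$; and $\M_c$ is based on a transitive frame, so (restricting to the submodel generated by $s$ if one wants the underlying frame transitive rather than just the relevant part) $\Sigma$ is satisfiable over a transitive frame. Combined with the soundness proposition already proved, this gives strong completeness with respect to the class of transitive frames.

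The main obstacle is the bookkeeping in the transitivity argument: one has to feed the hypothesis $\Delta\phi\land\circ(\neg\psi_1\to\phi)\in s$ through the $R_c$-clause \emph{twice} — once to move into $t$ and once to move into $u$ — and the axioms A4-1 through A4-4 are precisely engineered so that each required antecedent ($\Delta\Delta\phi$, $\circ(\neg\psi_1\to\Delta\phi)$, $\Delta\circ(\neg\psi_2\to\phi)$, $\circ(\neg\psi_1\to\circ(\neg\psi_2\to\phi))$) is available in $s$. Getting the pattern of which formula plays the role of "$\phi$" in the $R_c$-clause at each stage — first $\Delta\phi$, then $\circ(\neg\psi_2\to\phi)$ — is the delicate point; everything else is a routine replay of the minimal completeness proof. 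One should also double-check that $\bullet\psi_1\in s$ is genuinely available as the witness for $sR_cu$, which it is, since it was the witness for $sR_ct$.
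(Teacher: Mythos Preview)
Your proposal is correct and follows essentially the same route as the paper: you show $R_c$ is transitive by reusing the witness $\psi_1$ for $sR_ct$ as the witness for $sR_cu$, pushing $\Delta\phi$ into $t$ via A4-1/A4-2 and $\circ(\neg\psi_2\to\phi)$ into $t$ via A4-3/A4-4, then invoking the $tR_cu$ clause. The only cosmetic difference is that the paper phrases the argument as a proof by contradiction (assume $sR_cu$ fails, pick $\phi'$ with $\Delta\phi'\land\circ(\neg\psi_1\to\phi')\in s$ but $\phi'\notin u$, derive $\phi'\in u$), whereas you give the equivalent direct proof.
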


\begin{proof}
It suffices to show that $R_c$ is transitive.
Suppose $sR_ct$ and $tR_cu$. Then there exists $\psi_1$ such that

(1)~~~$\bullet\psi_1\in s$ and

(2)~~~for each $\phi$, if $\Delta\phi\land\circ(\neg\psi_1\to\phi)\in s$, then $\phi\in t$.\\
Moreover, there is a $\psi_2$ such that

(3)~~~$\bullet\psi_2\in t$ and

(4)~~~for each $\phi$, if $\Delta\phi\land\circ(\neg\psi_2\to\phi)\in t$, then $\phi\in u$.

Assume towards a contradiction that $sR_cu$ fails. Then by (1), there must exist $\phi'$ such that (5)~~~$\Delta \phi'\land\circ(\neg\psi_1\to\phi')\in s$, but $\phi'\notin u$. The remainder is to show that $\Delta\phi'\land\circ(\neg\psi_2\to\phi')\in t$, since then using (4), we can arrive at a contradiction.

\medskip

Since $\Delta\phi'\in s$, from axiom A4-1, it follows that $\Delta\Delta\phi'\in s$; also, from axiom A4-2, it follows that $\circ(\neg\psi_1\to\Delta\phi')\in s$. Then using (2), we infer $\Delta\phi'\in t$.

Due to (1) and (5), since $\vdash\bullet\psi_1\land\Delta \phi'\land\circ(\neg\psi_1\to\phi')\to\Delta\circ(\neg\psi_2\to\phi')$ (axiom A4-3), we have $\Delta\circ(\neg\psi_2\to\phi')\in s$; since $\vdash\bullet\psi_1\land\Delta \phi'\land\circ(\neg\psi_1\to\phi')\to\circ(\neg\psi_1\to\circ(\neg\psi_2\to\phi'))\in s$ (Axiom A4-4), we get $\circ(\neg\psi_1\to\circ(\neg\psi_2\to\phi'))\in s$. Then using (2) again, we conclude that $\circ(\neg\psi_2\to\phi')\in t$, as desired.
\end{proof}


We conclude this subsection with a proposition and two conjectures.

\begin{proposition}
$\vdash\Delta\phi\to\Delta\Delta\phi\land\Delta\circ\phi\land\circ\Delta\phi\land\circ\circ\phi$.
\end{proposition}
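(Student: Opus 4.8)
The plan is to route through the completeness theorem just established: since ${\bf K4^{\nabla\bullet}}$ is strongly complete with respect to the class of transitive frames, it suffices to check that $\Delta\phi\to\Delta\Delta\phi\land\Delta\circ\phi\land\circ\Delta\phi\land\circ\circ\phi$ is valid on every transitive frame. Everything then reduces to a short semantic computation, and all four conjuncts fall out of a single observation.

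That observation is the following. Let $\M=\lr{S,R,V}$ with $R$ transitive, and suppose $\M,s\vDash\Delta\phi$. I claim that then, for every $t$ with $sRt$: (a) $\M,t\vDash\Delta\phi$, and (b) $\M,t\vDash\circ\phi$. For (a): if $tRu$ and $tRv$, then $sRu$ and $sRv$ by transitivity, so $\M,u\vDash\phi\iff\M,v\vDash\phi$ because $\M,s\vDash\Delta\phi$; hence $\M,t\vDash\Delta\phi$. For (b): suppose $\M,t\vDash\phi$ and $tRu$; then $sRu$ by transitivity, and from $sRt$, $sRu$ and $\M,s\vDash\Delta\phi$ we get $\M,u\vDash\phi$ out of $\M,t\vDash\phi$; hence $\M,t\vDash\circ\phi$, recalling that $\M,t\vDash\circ\phi$ means exactly that $\M,t\vDash\phi$ forces all $R$-successors of $t$ to satisfy $\phi$.

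With (a) and (b) in hand the four conjuncts are immediate, each under the standing hypothesis $\M,s\vDash\Delta\phi$. By (a) every $R$-successor of $s$ satisfies $\Delta\phi$, so any two of them agree on $\Delta\phi$, giving $\M,s\vDash\Delta\Delta\phi$; and $\M,s\vDash\circ\Delta\phi$, since the antecedent $\Delta\phi$ of $\circ\Delta\phi$ holds at $s$ while (a) supplies the consequent. By (b) every $R$-successor of $s$ satisfies $\circ\phi$, so any two of them agree on $\circ\phi$, giving $\M,s\vDash\Delta\circ\phi$; and $\M,s\vDash\circ\circ\phi$, because all $R$-successors of $s$ satisfy $\circ\phi$, making the implication implicit in $\circ\circ\phi$ hold (vacuously for its antecedent, or directly for its consequent). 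This establishes validity over transitive frames, and completeness finishes the proof.

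I do not anticipate a serious obstacle; the only points needing care are the degenerate case where $s$ has no $R$-successors (all four conjuncts then hold vacuously) and keeping in mind that $\circ\alpha$ at a world is a conditional, which is precisely why the $\circ$-conjuncts come out so cheaply once (a) and (b) deliver their facts. One could instead attempt a purely syntactic derivation — the conjunct $\Delta\phi\to\Delta\Delta\phi$ is literally A4-1, and $\Delta\phi\to\circ\Delta\phi$ follows from A4-2 by instantiating its schematic antecedent with a tautology and applying R4 — but obtaining $\Delta\phi\to\Delta\circ\phi$ and $\Delta\phi\to\circ\circ\phi$ directly from A4-3 and A4-4 forces one to discharge the $\bullet\psi_1\land\circ(\neg\psi_1\to\phi)$ antecedents by a case split (on $\bullet\neg\phi$ versus $\circ\neg\phi$, then on $\phi$ versus $\neg\phi$) that does not close cleanly; hence routing through completeness is the economical route, and the one I would take.
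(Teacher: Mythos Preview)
Your proof is correct. The paper itself states this proposition without proof, so there is nothing to compare against directly; your route through completeness is the natural one given that the proposition is placed immediately after the strong completeness theorem for ${\bf K4^{\nabla\bullet}}$, and your semantic verification via claims (a) and (b) is clean and accurate. Your side remark about the syntactic route is also on point: $\Delta\phi\to\Delta\Delta\phi$ is literally A4-1 and $\Delta\phi\to\circ\Delta\phi$ drops out of A4-2 with $\psi:=\top$ and R4, while the remaining two conjuncts do not instantiate A4-3/A4-4 cleanly because of the $\bullet\psi_1$ antecedent, so the completeness detour is the right call.
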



\begin{conjecture}
$\vdash\Delta\phi\to\circ^n\Delta^m\circ^l\Delta^k\phi$ for all $n,m,l,k\in\mathbb{N}$ such that $n+m+l+k\geq 2$.
\end{conjecture}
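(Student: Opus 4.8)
The plan is to bypass a direct syntactic induction and argue \emph{semantically}, then close the loop with completeness: since ${\bf K4^{\nabla\bullet}}$ is strongly complete with respect to transitive frames, it suffices to show that $\Delta\phi\to\circ^n\Delta^m\circ^l\Delta^k\phi$ is valid on every transitive frame. The point of working over frames is that the clean argument passes through the auxiliary operator $\Box$, which is not in $\mathcal{L}(\nabla,\bullet)$ but is perfectly meaningful on frames; one cannot avoid it, because neither $\Delta\phi\to\circ\phi$ nor $\Delta\phi\to\Box\phi$ is valid, so operators cannot be peeled off one at a time in the object language.

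The core fact I would establish first is: on a transitive model $\M=\lr{S,R,V}$, if $\M,s\vDash\Delta\phi$ then every $R$-successor $w$ of $s$ satisfies both $\Delta\phi$ and $\circ\phi$. For $\Delta\phi$ this is immediate, since two $R$-successors of $w$ are $R$-successors of $s$ by transitivity, hence agree on $\phi$. For $\circ\phi$: assume $w\vDash\phi$ and $wRv$; then $sRv$ by transitivity, and since $w$ is an $R$-successor of $s$ with $w\vDash\phi$ while $s\vDash\Delta\phi$, all $R$-successors of $s$ — in particular $v$ — satisfy $\phi$, so $w\vDash\Box\phi$ and hence $w\vDash\circ\phi$. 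Iterating this (at each successor $w$ we again have $w\vDash\Delta\phi$) gives, on transitive frames, $\Delta\phi\to\Box^j\Delta\phi$ for every $j\ge 0$ and $\Delta\phi\to\Box^j\circ\phi$ for every $j\ge 1$; chaining $\Box\psi\to\Delta\psi$ also recovers $\Delta\phi\to\Delta^r\phi$ for all $r\ge 1$ (which is just A4-1 iterated). The second ingredient is a purely modal monotonicity lemma valid on all frames: for any $\chi$, any $p\ge 0$, and any string $\tau$ of length $p$ over $\{\circ,\Delta\}$, one has $\vDash\Box^p\chi\to\tau\chi$, proved by induction on $p$ from $\Box\psi\to\circ\psi$, $\Box\psi\to\Delta\psi$, and the monotonicity of $\Box$.

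With these in hand the assembly is a short case split. Suppose $\M,s\vDash\Delta\phi$ on a transitive model and $n+m+l+k\ge 2$. If $k\ge 1$, set $\psi=\Delta^{k-1}\phi$, so $\Delta^k\phi=\Delta\psi$ and $s\vDash\Delta\psi$; then $s\vDash\Box^{n+m+l}\Delta\psi$, and the monotonicity lemma with $\tau=\circ^n\Delta^m\circ^l$ (of length $n+m+l$) yields $s\vDash\circ^n\Delta^m\circ^l\Delta^k\phi$. If $k=0$ and $l\ge 1$, then $n+m+l-1\ge 1$, so $s\vDash\Box^{n+m+l-1}\circ\phi$, and the monotonicity lemma with $\tau=\circ^n\Delta^m\circ^{l-1}$ gives $s\vDash\circ^n\Delta^m\circ^l\phi$. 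The remaining cases $k=l=0$ go the same way: if $m\ge 1$ take the inner formula $\Delta^{m-1}\phi$ and prefix $\Box^n$; if $m=l=k=0$ then $n\ge 2$ and $s\vDash\Box^{n-1}\circ\phi$ gives $s\vDash\circ^n\phi$. Invoking completeness then turns this validity into the theorem $\vdash\Delta\phi\to\circ^n\Delta^m\circ^l\Delta^k\phi$.

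The hard part, I expect, is the innermost-$\circ$ cases, and specifically getting $\Delta\phi\to\Box\circ\phi$ on transitive frames right, since the tempting $\Delta\phi\to\circ\phi$ is outright false; the degenerate boundary cases (some of $n,m,l,k$ equal to $0$ — e.g.\ $n+m+l=0$, where the box prefix is empty and the claim collapses to $\Delta\phi\to\Delta^k\phi$) also need a little bookkeeping. One must also check that the target formula genuinely lies in $\mathcal{L}(\nabla,\bullet)$, so that no step secretly requires an object-language $\Box$ and the completeness theorem for ${\bf K4^{\nabla\bullet}}$ legitimately closes the argument.
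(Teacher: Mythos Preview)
The paper does \emph{not} prove this statement: it is explicitly labelled a \textbf{Conjecture} and left open, with only the four base cases $\Delta\phi\to\Delta\Delta\phi\land\Delta\circ\phi\land\circ\Delta\phi\land\circ\circ\phi$ recorded in the preceding proposition. There is thus no ``paper's own proof'' to compare against.

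Your argument, however, is correct and in fact settles the conjecture. The two semantic lemmas you isolate are both sound on transitive frames: (i) if $s\vDash\Delta\phi$ then every $R$-successor $w$ satisfies $\Delta\phi$ and $\circ\phi$ (the second half is exactly the place where the naive object-language attempt $\Delta\phi\to\circ\phi$ fails, and your detour through $\Box$ repairs it), yielding $\Delta\phi\to\Box^{j}\Delta\phi$ for all $j\ge 0$ and $\Delta\phi\to\Box^{j}\circ\phi$ for all $j\ge 1$; and (ii) on arbitrary frames $\Box^{p}\chi\to\tau\chi$ for any length-$p$ string $\tau$ over $\{\circ,\Delta\}$, by iterating $\Box\psi\to\circ\psi$ and $\Box\psi\to\Delta\psi$ under $\Box$-monotonicity. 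Your four-way case split on which of $k,l,m,n$ is the innermost nonzero index then reduces every instance to one of these two schemata, and the bookkeeping checks out (in particular the inequality $n+m+l-1\ge 1$ in the $k=0$, $l\ge 1$ case is exactly what the hypothesis $n+m+l+k\ge 2$ provides). Since the target formula lies in $\mathcal{L}(\nabla,\bullet)$, strong completeness of ${\bf K4^{\nabla\bullet}}$ over transitive frames legitimately converts validity into provability.

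So rather than matching the paper, you have gone beyond it: your semantic route through the auxiliary $\Box$ is precisely the kind of argument the paper's syntactic axioms A4-1 through A4-4 are shadowing, and it resolves what the authors left as an open question.
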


\begin{conjecture}
$\vdash\Delta\phi\to\Delta^m\heartsuit\phi$ for all $m\in\mathbb{N}$ such that $m\geq 1$, where $\heartsuit$ is any combinations of $\Delta,\nabla,\bullet,\circ,\neg$.
\end{conjecture}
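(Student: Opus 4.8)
The plan is to route through completeness. Since ${\bf K4^{\nabla\bullet}}$ is sound and strongly complete with respect to the class of transitive frames, it suffices to prove the semantic statement: for every transitive model $\M=\lr{S,R,V}$, every $s\in S$, every $m\geq 1$, and every string $\heartsuit$ of operators from $\{\Delta,\nabla,\bullet,\circ,\neg\}$, if $\M,s\vDash\Delta\phi$ then $\M,s\vDash\Delta^m\heartsuit\phi$. A direct derivation from A4-1--A4-4 together with the preceding propositions is in principle available as well, but the bookkeeping for the nested modalities is heavy, so I would fall back on it only if a purely syntactic argument were required.

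First I would fix $\M$ and $s$ with $\M,s\vDash\Delta\phi$. If $R(s)=\emptyset$, then $\M,s\vDash\Delta\xi$ holds vacuously for every $\xi$, so since $m\geq 1$ we immediately get $\M,s\vDash\Delta^m\heartsuit\phi$. Otherwise I would exploit transitivity in the form: whenever $w\in R(s)$, then both $w\in R(s)$ and $R(w)\subseteq R(s)$ (from $sRw$ and $wRu$ infer $sRu$). The heart of the argument is then the following lemma, to be proved by induction on the length of a string $\sigma$ over $\{\Delta,\nabla,\bullet,\circ,\neg\}$: \emph{all worlds in $R(s)$ assign the formula $\sigma\phi$ one and the same truth value.}

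The base case, $\sigma$ empty, is exactly $\M,s\vDash\Delta\phi$. The case $\sigma=\neg\sigma'$ negates the common value supplied by the induction hypothesis. For $\sigma=\Delta\sigma'$ or $\sigma=\nabla\sigma'$: for $w\in R(s)$ we have $R(w)\subseteq R(s)$, so by the induction hypothesis the successors of $w$ all agree on $\sigma'\phi$; hence $\M,w\vDash\Delta\sigma'\phi$ and $\M,w\nvDash\nabla\sigma'\phi$, so all of $R(s)$ agree (uniformly true, resp.\ uniformly false). For $\sigma=\circ\sigma'$ or $\sigma=\bullet\sigma'$: for $w\in R(s)$ both $w$ and all its successors lie in $R(s)$, so by the induction hypothesis they share a single $\sigma'\phi$-value $d$; if $d$ is true then no successor of $w$ falsifies $\sigma'\phi$, and if $d$ is false then $\M,w\nvDash\sigma'\phi$ --- either way $\M,w\vDash\circ\sigma'\phi$ and $\M,w\nvDash\bullet\sigma'\phi$, so again all of $R(s)$ agree. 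Taking $\sigma=\Delta^{m-1}\heartsuit$ in the lemma, all worlds of $R(s)$ agree on $\Delta^{m-1}\heartsuit\phi$, i.e.\ $\M,s\vDash\Delta(\Delta^{m-1}\heartsuit\phi)=\Delta^m\heartsuit\phi$. The hypothesis $m\geq 1$ is exactly what supplies the one outer $\Delta$ that records this agreement, and the statement is genuinely false for $m=0$ (e.g.\ $\Delta\phi\to\bullet\phi$ is not valid).

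The main obstacle I anticipate is not any individual case but pinning down the right inductive invariant: one must not try to carry ``$\M,s\vDash\Delta\sigma\phi$'' through the induction but instead the stronger ``all $R$-successors of $s$ agree on $\sigma\phi$'', and one must notice that in the $\circ$- and $\bullet$-clauses the value of $\sigma'\phi$ at the witnessing world itself enters the truth condition, which is controlled only because transitivity returns that world into $R(s)$. Transitivity is used essentially here --- already $\Delta\phi\to\Delta\Delta\phi$ fails to be derivable in ${\bf K^{\nabla\bullet}}$ --- so no argument carried out purely in the minimal system can work.
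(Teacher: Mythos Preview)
The statement you are addressing is labelled a \emph{Conjecture} in the paper; the paper offers no proof of it at all, so there is nothing to compare your argument against. Your semantic route through the completeness of ${\bf K4^{\nabla\bullet}}$ over transitive frames is sound and in fact settles the conjecture: the inductive invariant ``all worlds of $R(s)$ assign $\sigma\phi$ the same truth value'' is exactly the right one, the use of $R(w)\subseteq R(s)$ from transitivity is what drives every modal case, and your remark that in the $\circ$- and $\bullet$-clauses one also needs $w\in R(s)$ itself (so that the induction hypothesis controls the value of $\sigma'\phi$ at $w$) is the point where a careless argument would slip. The final step, instantiating $\sigma$ with $\Delta^{m-1}\heartsuit$ to peel off the outer $\Delta$, is correct and indeed requires $m\geq 1$. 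So rather than merely matching the paper, your proposal resolves what the paper leaves open.
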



\subsection{Reflexive system}\label{sec.reflexive}

The proof system of $\mathcal{L}(\nabla,\bullet)$ over reflexive frames, denoted ${\bf T^{\nabla\bullet}}$, is an extension of ${\bf K^{\nabla\bullet}}$ with an extra axiom schema AT:
$$\Delta\phi\land\phi\to\circ(\psi\to\phi).$$

Let us start with the soundness of ${\bf T^{\nabla\bullet}}$. By the soundness of ${\bf K^{\nabla\bullet}}$, we need only show the validity of AT.
\begin{proposition}
$\Delta\phi\land\phi\to\circ(\psi\to\phi)$ is valid over the class of reflexive frames.
\end{proposition}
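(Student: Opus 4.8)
The plan is to verify the validity of the schema AT directly by a semantic argument over reflexive frames, mirroring the style of the earlier soundness proofs (e.g. for {\bf (NAD)} and A4-3). Let $\M=\lr{S,R,V}$ be a model based on a reflexive frame and let $s\in S$; I assume $\M,s\vDash\Delta\phi\land\phi$ and aim to show $\M,s\vDash\circ(\psi\to\phi)$.

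First, I would unpack the target formula: by the semantics of $\circ$, showing $\M,s\vDash\circ(\psi\to\phi)$ amounts to assuming $\M,s\vDash\psi\to\phi$ and then establishing $\M,t\vDash\psi\to\phi$ for every $t$ with $sRt$. So fix an arbitrary $R$-successor $t$ of $s$; it suffices to show $\M,t\vDash\phi$ (which immediately gives $\M,t\vDash\psi\to\phi$). Here is where reflexivity enters: since $R$ is reflexive we have $sRs$, so $s$ is itself among the $R$-successors of $s$. Combined with $sRt$, the assumption $\M,s\vDash\Delta\phi$ — which says all $R$-successors of $s$ agree on the truth value of $\phi$ — forces $\M,t\vDash\phi\iff\M,s\vDash\phi$. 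Since $\M,s\vDash\phi$ by hypothesis, we conclude $\M,t\vDash\phi$, hence $\M,t\vDash\psi\to\phi$. As $t$ was arbitrary, $\M,s\vDash\circ(\psi\to\phi)$, completing the argument.

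There is essentially no main obstacle here; the proof is a short unwinding of the semantic clauses, and the only substantive ingredient is the use of the reflexive arrow $sRs$ to "anchor" the value of $\phi$ at $s$ and propagate it to every successor via $\Delta\phi$. It is worth noting that the antecedent conjunct $\phi$ is genuinely needed (without it $\Delta\phi$ alone only pins down uniformity, not truth), and that the premise $\M,s\vDash\psi\to\phi$ built into the semantics of $\circ$ is not actually used in the derivation of $\M,t\vDash\phi$ — it is subsumed, which is consistent with the fact recorded earlier (Fact~\ref{fact.circtodelta}) that $\circ\phi\land\phi\to\Delta\phi$ and its converse-flavoured companions hold. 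One could alternatively note that over reflexive frames $\vDash\Delta\phi\land\phi\to\Box\phi$ and $\vDash\Box\phi\to\circ(\psi\to\phi)$, and chain these, but the direct argument above is cleaner and self-contained.
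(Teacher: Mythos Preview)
Your proof is correct and follows essentially the same approach as the paper: both arguments use reflexivity to obtain $sRs$ and then combine $\M,s\vDash\phi$ with $\M,s\vDash\Delta\phi$ to force $\M,t\vDash\phi$ at every successor $t$. The only cosmetic difference is that the paper phrases it as a proof by contradiction (assuming $\M,s\nvDash\circ(\psi\to\phi)$ to obtain a $t$ with $t\vDash\psi\land\neg\phi$), whereas you argue directly.
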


\begin{proof}
Given any reflexive model $\M=\lr{S,R,V}$ and any state $s\in S$, suppose, for a contradiction, that $\M,s\vDash\Delta\phi\land\phi$ but $\M,s\nvDash\circ(\psi\to\phi)$. From the latter, it follows that there exists $t$ such that $sRt$ and $t\vDash\psi\land\neg\phi$. By the reflexivity of $R$, we have $sRs$. Then by the first supposition and $sRt$, we infer that $t\vDash\phi$: a contradiction.
\end{proof}

As observed above, $R_c$ is not necessarily serial, thus is not necessarily reflexive. To fix this problem so as to gain the completeness, we need to use the reflexive closure of $R_c$.

\begin{definition}[The canonical model for ${\bf T^{\nabla\bullet}}$] Model $\M_c^T=\lr{S_c,R_c^T,V_c}$ is the {\em canonical model} of ${\bf T^{\nabla\bullet}}$, if $S^c$ and $V^c$ is as previous, and $R_c^T$ is the reflexive closure of $R_c$; in symbol, $R_c^T=R_c\cup\{(s,s)\mid s\in S^c\}$.
\end{definition}

It is clear that $\M_T$ is reflexive. Moreover, the truth lemma holds for $\M_T$.
\begin{lemma}
For each $\phi\in\mathcal{L}(\nabla,\bullet)$ and for each $s\in S_c$, $$\M_c^T,s\vDash\phi\iff \phi\in s.$$
\end{lemma}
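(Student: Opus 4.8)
The plan is to mimic the Truth Lemma for $\M_c$ (Lemma~\ref{lem.truthlem-mc}), carrying out induction on $\phi$ with the only nontrivial cases being $\nabla\phi$ and $\bullet\phi$, but now working with the reflexive closure $R_c^T = R_c \cup \{(s,s)\mid s\in S^c\}$ instead of $R_c$. The effect of adding all reflexive loops is that, at a state $s$, the set of $R_c^T$-successors is exactly the set of $R_c$-successors together with $s$ itself; so I would expect the argument to split according to whether $s$ satisfies $\phi$ or not, and to lean crucially on axiom AT and on the fact that $\M_c^T$ is reflexive so the underlying logic is $\mathbf{T}^{\nabla\bullet}$.

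First I would dispatch the Boolean cases as trivial. For the case $\bullet\phi$: for the `$\Leftarrow$' direction, suppose $\bullet\phi\in s$; by A1, $\phi\in s$, so by IH $\M_c^T,s\vDash\phi$, and it remains to produce an $R_c^T$-successor falsifying $\phi$. Since $\bullet\phi\in s$ means $\circ\phi\notin s$, I would show that $\{\chi\mid \Delta\chi\land\circ(\neg\phi\to\chi)\in s\}\cup\{\neg\phi\}$ is consistent exactly as in the $\M_c$ proof (using Prop.~\ref{prop.useful-3} and R2), obtaining an $R_c$-successor $t\ni\neg\phi$; since $R_c\subseteq R_c^T$, $t$ is also an $R_c^T$-successor, and by IH $\M_c^T,t\nvDash\phi$. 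For `$\Rightarrow$': suppose $\M_c^T,s\vDash\bullet\phi$ but $\bullet\phi\notin s$, i.e.\ $\circ\phi\in s$. By IH $\phi\in s$, and there is an $R_c^T$-successor $t$ of $s$ with $\M_c^T,t\nvDash\phi$, so by IH $\phi\notin t$. Now $t\neq s$ (since $\phi\in s$), so $sR_ct$ holds via the canonical clause, giving some $\psi$ with $\bullet\psi\in s$ and $\Delta\beta\land\circ(\neg\psi\to\beta)\in s\Rightarrow\beta\in t$. From $\circ\phi,\phi\in s$ I get $\Delta\phi\in s$ by Fact~\ref{fact.circtodelta} and $\circ(\neg\psi\to\phi)\in s$ by A3, hence $\phi\in t$, a contradiction.

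For the case $\nabla\phi$: for `$\Rightarrow$', suppose $\M_c^T,s\vDash\nabla\phi$ but $\Delta\phi\in s$. There are $R_c^T$-successors $t,u$ of $s$ with $\phi\in t$, $\neg\phi\in u$ (by IH). At least one of them differs from $s$; and in fact I would argue $s$ itself cannot be "in between" in a way that helps—more carefully, since $\Delta\phi\in s$ and witnesses disagree on $\phi$, at least one of $t,u$ is distinct from $s$ and hence an $R_c$-successor, and by a short argument (as in Lemma~\ref{lem.truthlem-mc}'s `$\Rightarrow$' direction for $\nabla$, extracting $\bullet(\phi\to\psi)\in s$ or $\bullet(\neg\phi\to\chi)\in s$ from $R_c$-successors, and treating the case $t=s$ or $u=s$ directly using $\phi\in s$ or $\neg\phi\in s$ together with A1) one derives $\nabla\phi\in s$ via A7, a contradiction. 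For `$\Leftarrow$', suppose $\nabla\phi\in s$; by A6, $\bullet\phi\in s$ or $\bullet\neg\phi\in s$, and I treat the first (the second is symmetric). Then $\phi\in s$ by A1, so $s$ is itself an $R_c^T$-successor of $s$ satisfying $\phi$ (after IH); it remains to find an $R_c^T$-successor falsifying $\phi$, which is exactly the consistency of $\{\chi\mid \Delta\chi\land\circ(\neg\phi\to\chi)\in s\}\cup\{\neg\phi\}$—and this follows from $\bullet\phi\in s$ (hence $\circ\phi\notin s$) by Prop.~\ref{prop.useful-3} and R2 just as in the $\bullet\phi$ case. So $s$ has two $R_c^T$-successors disagreeing on $\phi$, giving $\M_c^T,s\vDash\nabla\phi$.

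The main obstacle I anticipate is bookkeeping around the possibility $t=s$ or $u=s$ among the witnesses in the `$\Rightarrow$' directions: unlike in $\M_c$, the point $s$ is now always its own successor, so the clean extraction of $\bullet(\phi\to\psi)\in s$ (which used the defining clause of $R_c$) need not apply to that witness. The fix is that whenever a witness coincides with $s$, membership of $\phi$ (or $\neg\phi$) in $s$ is available directly, and AT supplies the needed $\circ(\psi\to\phi)$-type facts so that the relevant canonical conditions can still be met; this is precisely why AT is the extra axiom. Once the reflexive loop is handled uniformly in this way, soundness (already proved) plus this Truth Lemma yields strong completeness of $\mathbf{T}^{\nabla\bullet}$ over reflexive frames in the usual manner.
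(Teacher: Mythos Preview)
Your overall strategy matches the paper's: induct on $\phi$, reuse the $\M_c$ arguments for the witnesses that lie in $R_c$, and handle the new possibility that a witness is $s$ itself via the reflexive loop. Your treatment of $\bullet\phi$ (both directions) and the `$\Leftarrow$' direction of $\nabla\phi$ is correct and essentially what the paper does (the paper is terser, just citing $R_c\subseteq R_c^T$ and Lemma~\ref{lem.truthlem-mc}; your use of $s$ itself as the $\phi$-witness in the $\nabla$ `$\Leftarrow$' case is a harmless and pleasant variant).

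The one real gap is in your `$\Rightarrow$' direction for $\nabla\phi$ in the mixed case, say $t=s$ and $u\neq s$. You write that one ``derives $\nabla\phi\in s$ via A7'' after ``treating the case $t=s$ \ldots\ using $\phi\in s$ \ldots\ together with A1''. Neither A1 nor A7 does the work here. From $sR_cu$ and $\phi\notin u$, $\Delta\phi\in s$ you only get $\bullet(\neg\phi\to\psi)\in s$ (one half of the A7 hypothesis); to fire A7 you would also need $\bullet(\phi\to\psi')\in s$ for some $\psi'$, and there is no way to obtain this from $\phi\in s$ and A1. The paper's fix---which your final paragraph gestures at but does not connect to this step---is to bypass A7 entirely: from $\Delta\phi\in s$ and $\phi\in s$, axiom AT gives $\circ(\neg\psi\to\phi)\in s$, directly contradicting the $\circ(\neg\psi\to\phi)\notin s$ extracted from $sR_cu$. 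So replace ``A1'' by ``AT'' and drop the appeal to A7 in the mixed subcase; then your proof goes through and coincides with the paper's.
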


\begin{proof}
By induction on $\phi$. We need only check the cases $\nabla\phi$ and $\bullet\phi$.
\begin{itemize}
\item Case $\nabla\phi$: Suppose, for a contradiction, that $\M_c^T,s\vDash\nabla\phi$ but $\nabla\phi\notin s$ (i.e. $\Delta\phi\in s$). Then by induction hypothesis, there are $t,u\in S_c$ such that $sR_c^Tt$ and $sR_c^Tu$ and $\phi\in t$ and $\phi\notin u$. It is obvious that $t\neq u$. According to the definition of $R_T$, we consider the following cases.
    \begin{itemize}
    \item $s\neq t$ and $s\neq u$. Then $sR_ct$ and $sR_cu$. In this case, the proof goes as the corresponding part in Lemma~\ref{lem.truthlem-mc}, and we can arrive at a contradiction.
    \item $s=t$ or $s=u$. W.l.o.g. we may assume that $s=t$, and thus $s\neq u$, which implies that $\phi\in s$ and $sR_cu$. Then there exists $\psi$ such that $\bullet\psi\in s$, and for every $\chi$, if $\Delta\chi\land\circ(\neg\psi\to\chi)\in s$, then $\chi\in u$. Since $\Delta\phi\in s$ and $\phi\notin u$, we obtain $\circ(\neg\psi\to\phi)\notin s$. However, from $\Delta\phi\in s$ again and $\phi\in s$ and axiom AT, it follows that $\circ(\neg\psi\to\phi)\in s$: a contradiction.
    \end{itemize}
    The other way around is immediate from the corresponding part in Lemma~\ref{lem.truthlem-mc} and $R_c\subseteq R_c^T$.
\item Case $\bullet\phi$. Suppose, for a contradiction, that $\M_T,s\vDash\bullet\phi$ but $\bullet\phi\notin s$. Then by induction hypothesis, $\phi\in s$ and there is a $t\in S_c$ such that $sR_c^Tt$ and $\phi\notin t$. Obviously, $s\neq t$, thus $sR_ct$. Then the proof continues as the corresponding part in Lemma~\ref{lem.truthlem-mc}, and it will lead to a contradiction. The other way around is immediate from the corresponding part in Lemma~\ref{lem.truthlem-mc} and $R_c\subseteq R_c^T$.
\end{itemize}
\end{proof}

It follows immediately that
\begin{theorem}
${\bf T^{\nabla\bullet}}$ is sound and strongly complete with respect to the class of reflexive frames.
\end{theorem}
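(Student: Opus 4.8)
The plan is to assemble this theorem from the two ingredients already in place, so the proof will be short. The soundness direction is inherited: by the soundness of ${\bf K^{\nabla\bullet}}$ together with the validity of AT over reflexive frames (the preceding proposition), every axiom of ${\bf T^{\nabla\bullet}}$ is valid on every reflexive frame, and the rules R1--R4, MP all preserve validity; hence every theorem of ${\bf T^{\nabla\bullet}}$ is valid over the class of reflexive frames. The completeness direction is the standard Lindenbaum-plus-Truth-Lemma argument applied to the canonical model $\M_c^T$.

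For strong completeness, I would begin with an arbitrary ${\bf T^{\nabla\bullet}}$-consistent set $\Gamma\subseteq\mathcal{L}(\nabla,\bullet)$ and extend it by Lindenbaum's lemma to a maximal consistent set $s$, which by definition is a point of $S_c$. Applying the Truth Lemma for $\M_c^T$ (the lemma immediately preceding this theorem), we get $\M_c^T,s\vDash\phi$ for every $\phi\in\Gamma$. Since $R_c^T$ is by construction the reflexive closure of $R_c$, the frame underlying $\M_c^T$ is reflexive, so $\Gamma$ is satisfied at a point of a reflexive model. As $\Gamma$ was arbitrary, this yields strong completeness with respect to the class of reflexive frames, and combined with soundness the theorem follows.

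There is no genuine obstacle left at this point: the substantive work has already been carried out in showing that the Truth Lemma survives the passage from $R_c$ to its reflexive closure — that is exactly where axiom AT is invoked, to handle the new reflexive loop $sR_c^T s$ in the $\nabla$ and $\bullet$ clauses. The only things worth double-checking are bookkeeping: that $\M_c^T$ retains the same carrier $S_c$ and valuation $V_c$ as $\M_c$, so that the maximal consistent set $s$ genuinely lies in $\M_c^T$; and that AT on top of the ${\bf K^{\nabla\bullet}}$ axioms is all that is required, i.e. that no frame condition stronger than reflexivity has crept into the soundness argument. Given these routine verifications, the theorem drops out in essentially one line from the Truth Lemma.
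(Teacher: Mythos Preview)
Your proposal is correct and matches the paper's approach exactly: the paper states the theorem with ``It follows immediately that'' after the Truth Lemma for $\M_c^T$, and your write-up simply spells out the standard Lindenbaum-plus-Truth-Lemma argument together with the observation that $R_c^T$ is reflexive by construction and that soundness comes from the soundness of ${\bf K^{\nabla\bullet}}$ plus the validity of AT on reflexive frames.
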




\weg{\subsection{Euclidean system}

$\Delta\bullet\Delta\psi$

$\Delta\bullet(\circ\psi\land\psi)$

$\neg\Delta\phi\to\Delta\neg\Delta\phi$

$\neg\Delta\phi\to\circ(\psi\to\neg\Delta\phi)$

$\bullet\phi\to\Delta(\phi\to\bullet\phi)$

$\bullet\phi\to\circ(\circ\phi\land\phi\to\psi)$

\begin{theorem}
... is sound and strongly complete with respect to the class of Euclidean frames.
\end{theorem}

\begin{proof}

\end{proof}

\section{Symmetric system}

$\text{AB-1}~~~\phi\to\Delta(\Delta\phi\land\circ(\phi\to\psi)\land\bullet\psi)$

$\text{AB-2}~~~\phi\to\circ(\Delta\phi\land\circ(\phi\to\psi)\land\bullet\psi\to\chi)$

\begin{proposition}
\text{AB-1} and \text{AB-2} are valid over the class of symmetric frames.
\end{proposition}

\begin{proof}
We only show the validity of \text{AB-2}, since the proof for \text{AB-1} is similar.

Suppose, for a contradiction, that there is a symmetric model $\M=\lr{S,R,V}$ and $s\in S$ such that $\M,s\nvDash\text{AB-2}$. That is, $\M,s\vDash\phi$, but $\M,s\nvDash\circ(\Delta\phi\land\circ(\phi\to\psi)\land\bullet\psi\to\chi)$. Then there is a $t$ such that $sRt$ and $t\nvDash\Delta\phi\land\circ(\phi\to\psi)\land\bullet\psi\to\chi$, i.e. $t\vDash\Delta\phi\land\circ(\phi\to\psi)\land\bullet\psi\land\neg\chi$. From $t\vDash\bullet\psi$ it follows that $t\vDash\psi$ and there exists $u$ such that $tRu$ and $u\nvDash\psi$. Since $t\vDash\psi$, we have $t\vDash\phi\to\psi$, and thus $u\vDash\phi\to\psi$, hence $u\nvDash\phi$. By $sRt$ and the symmetry of $R$, $tRs$. We have found two successors of $t$ which do not agree with $\phi$, therefore $t\nvDash\Delta\phi$: a contradiction.
\end{proof}

\begin{proposition}
If $sR_ct$ and $\bullet\psi\in t$ for some $\psi$, then $tR_cs$.
\end{proposition}

\begin{proof}
Suppose that $sR_ct$ and $\bullet\psi\in t$ for some $\psi$. If it is not the case that $tR_cs$, then there must be a $\phi$ such that $\Delta\phi\land\circ(\neg\psi\to\phi)\in t$ but $\phi\notin s$ (that is $\neg\phi\in s$). Since $sR_ct$, there is a $\chi$ such that $\bullet\chi\in s$ and $(\ast):$ for all $\delta$, if $\Delta\delta\land\circ(\neg\chi\to\delta)\in s$, then $\delta\in t$. Since $\neg\phi\in s$, axiom $\text{AB-1}$ implies that $\Delta(\Delta\neg\phi\land\circ(\neg\phi\to\psi)\land\bullet\psi)\in s$, that is,  $\Delta\neg(\Delta\neg\phi\land\circ(\neg\phi\to\psi)\land\bullet\psi)\in s$; axiom $\text{AB-2}$ implies that $\circ(\Delta\neg\phi\land\circ(\neg\phi\to\psi)\land\bullet\psi\to\chi)\in s$, that is, $\circ(\neg\chi\to\neg(\Delta\neg\phi\land\circ(\neg\phi\to\psi)\land\bullet\psi))\in s$. Then $(\ast)$ gives $\neg(\Delta\neg\phi\land\circ(\neg\phi\to\psi)\land\bullet\psi))\in t$, i.e. $\neg(\Delta\phi\land\circ(\neg\psi\to\phi)\land\bullet\psi)\in t$: a contradiction.
\end{proof}

\begin{theorem}
... is sound and strongly complete with respect to the class of symmetric frames.
\end{theorem}

\begin{proof}

\end{proof}}

\section{Adding dynamic operators}\label{sec.dynamic}

This section generalizes the logic of contingency and accident to the simplest case of the dynamic operator: public announcements. We propose a complete axiomatization for the extended logic, and apply the system to analyse the successful and self-refuting formulas. Our results can be easily extended to the most general case of action models. It is noteworthy that the dynamic considerations in the accident logic is missing in the literature.

\subsection{Axiomatization with announcements}
The language of contingency and accident logic with public announcement, denoted $\mathcal{L}(\nabla,\bullet,[\cdot])$, is obtained from $\mathcal{L}(\nabla,\bullet)$ by adding public announcement operators.
$$\phi::=p\mid \neg\phi\mid \phi\land\phi\mid\nabla\phi\mid\bullet\phi\mid [\phi]\phi.$$
Intuitively, $[\psi]\phi$ is read `after each truthfully public announcement of $\psi$, it is the case that $\phi$'.

Semantically, the public announcement of $\psi$ is evaluated via eliminating all states where $\psi$ does not hold.
$$\M,s\vDash[\psi]\phi\iff\M,s\vDash\psi \text{ implies }\M|_\psi,s\vDash\phi.$$
Where $\M|_\psi$ is the model restriction of $\M$ to the $\psi$-states.

The common reduction axioms in propositional logic with public announcements consist of:

\medskip

$
\begin{array}{ll}
\text{AP}&[\psi]p\lra(\psi\to p)\\
\text{AN}&[\psi]\neg\phi\lra(\psi\to\neg[\psi]\phi)\\
\text{AC}&[\psi](\phi\land\chi)\lra([\psi]\phi\land[\psi]\chi)\\
\text{AA}&[\psi][\chi]\phi\lra[\psi\land[\psi]\chi]\phi\\
\end{array}
$

\medskip

For the logic $\mathcal{L}(\nabla,\bullet,[\cdot])$, we observe the following key axiom schemas:

\[\begin{array}{|ll|}
\hline
\text{A}\nabla&[\psi]\nabla\phi\lra(\psi\to\nabla[\psi]\phi\land\nabla[\psi]\neg\phi)\\
\text{A}\bullet&[\psi]\bullet\phi\lra(\psi\to\bullet[\psi]\phi)\\
\hline
\end{array}\]

Collecting all reduction axioms into the system ${\bf K^{\nabla\bullet}}$, we obtain a proof system ${\bf K^{\nabla\bullet[\cdot]}}$. We also use ${\bf K^{\bullet[\cdot]}}$ for the subsystem without the axioms involving $\nabla$, and ${\bf K^{\nabla[\cdot]}}$ for the subsystem without the axioms involving $\bullet$.

One may compute the following:
\[\begin{array}{ll}
&[\psi]\Delta\phi\lra(\psi\to\Delta[\psi]\phi\vee\Delta[\psi]\neg\phi)\\
&[\psi]\circ\phi\lra(\psi\to\circ[\psi]\phi)\\
\end{array}\]

\begin{theorem}
${\bf K^{\nabla\bullet[\cdot]}}$ is sound and complete with respect to the class of all frames.
\end{theorem}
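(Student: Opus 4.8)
The plan is to follow the standard reduction-argument template for public announcement logic, which proceeds in two stages: first establish that every formula of $\mathcal{L}(\nabla,\bullet,[\cdot])$ is provably equivalent in ${\bf K^{\nabla\bullet[\cdot]}}$ to a formula of the announcement-free fragment $\mathcal{L}(\nabla,\bullet)$, and then invoke the completeness of ${\bf K^{\nabla\bullet}}$ (the earlier theorem that ${\bf K^{\nabla\bullet}}$ is sound and strongly complete with respect to all frames). Soundness is routine: each reduction axiom AP, AN, AC, AA, A$\nabla$, A$\bullet$ is valid by a direct semantic check using the definition of $\M|_\psi$, and the rules R1--R4, MP preserve validity, so I would dispatch it in a sentence, perhaps noting that A$\nabla$ and A$\bullet$ follow the same pattern as the derived schemas $[\psi]\Delta\phi\lra(\psi\to\Delta[\psi]\phi\vee\Delta[\psi]\neg\phi)$ and $[\psi]\circ\phi\lra(\psi\to\circ[\psi]\phi)$ already displayed.

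For the completeness direction, I would first record that the system admits replacement of provable equivalents: if $\vdash\phi\lra\phi'$ then $\vdash\chi[\phi]\lra\chi[\phi']$ for any context $\chi$, which follows by induction on $\chi$ using R3, R4 (for the $\Delta$, $\circ$, equivalently $\nabla$, $\bullet$ cases), propositional reasoning (for the Boolean cases), and for the announcement case a derived congruence rule $\dfrac{\phi\lra\phi'}{[\psi]\phi\lra[\psi]\phi'}$ together with substitution in the announced formula. Then I would define a complexity measure $c(\cdot)$ on $\mathcal{L}(\nabla,\bullet,[\cdot])$-formulas — the usual one from the DEL literature, where $c([\psi]\phi)$ strictly dominates $c(\psi)$, $c([\psi]\phi)$ and is set up so that the right-hand side of each reduction axiom has strictly smaller measure than the left-hand side (this is the point of the ``$\psi\land[\psi]\chi$'' trick in AA and of pushing the announcement inward in A$\nabla$, A$\bullet$). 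An innermost-announcement induction then shows every formula is provably equivalent to an announcement-free one: take a formula of the form $[\psi]\alpha$ with $\psi,\alpha$ already announcement-free, apply the matching reduction axiom to strip one layer, use replacement of equivalents to rewrite subformulas, and appeal to the induction hypothesis on the strictly smaller result.

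The main obstacle — really the only place where care is needed — is setting up the complexity measure so that it simultaneously (i) decreases under every one of the six reduction axioms, including the subtle AA case where the announced formula on the right, $\psi\land[\psi]\chi$, is itself more syntactically complex than anything on the left, and (ii) interacts correctly with the new modalities $\nabla$, $\bullet$, which behave like $\Diamond$-style operators for counting purposes (so $c(\nabla\phi)=c(\bullet\phi)=c(\phi)+1$ suffices, exactly as one would treat $\Box$). One can reuse verbatim the measure from, e.g., the standard treatment in \cite{jfak.book:2011} or \cite{hvdetal.del:2007}, since nothing about $\nabla$ or $\bullet$ changes the combinatorics; the verification for A$\nabla$ needs the observation that on its right-hand side one has $[\psi]\phi$ and $[\psi]\neg\phi$, whose measures are controlled by $c([\psi]\phi)$ on the left because $c(\neg\phi)=c(\phi)$. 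Once the reduction is in place, completeness is immediate: given a consistent $\phi$, reduce it to an equivalent announcement-free $\phi'$, which is then ${\bf K^{\nabla\bullet}}$-consistent and hence satisfiable by the earlier completeness theorem, and satisfiability transfers back along the provable equivalence by soundness.
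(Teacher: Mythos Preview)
Your proposal is correct and follows essentially the same approach as the paper: soundness by checking the validity of the reduction axioms (the paper spells out the verification of A$\bullet$ in detail rather than dispatching it in a sentence, but that is a difference of emphasis, not of method), and completeness by the standard reduction to the announcement-free system ${\bf K^{\nabla\bullet}}$. In fact the paper's own completeness argument is the single line ``the completeness of ${\bf K^{\nabla\bullet[\cdot]}}$ reduces to that of ${\bf K^{\nabla\bullet}}$, using the usual reduction method,'' so your outline of the complexity measure and innermost-announcement induction simply unpacks what the paper leaves implicit.
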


\begin{proof}
The soundness of ${\bf K^{\nabla[\cdot]}}$ is given in~\cite[Prop.~7.4]{Fanetal:2015}. It suffices to show the validity of axiom $\text{A}\bullet$. Let $(\M,s)$ be an arbitrary pointed model, where $\M=\lr{S,R,V}$.
\weg{\[\begin{array}{ll}
&\M,s\vDash[\psi]\bullet\phi\\
\iff&\M,s\vDash\psi\text{ implies }\M|_\psi,s\vDash\bullet\phi\\
\iff&\M,s\vDash\psi\text{ implies } (\M|_\psi,s\vDash\phi\text{ and there exists }t\in\M|_\psi\text{ such that }\\
&~~~~~~~~~~~~~~~~~~~~~~~~sRt\text{ and }\M|_\psi,t\nvDash\phi)\\
\iff&(\M,s\vDash\psi\text{ implies }\M|_\psi,s\vDash\phi) \text{ and }(\M,s\vDash\psi\text{ implies there exists }t\in\M|_\psi\\
&\text{ such that }sRt\text{ and }\M|_\psi,t\nvDash\phi)\\
\iff&\M,s\vDash[\psi]\phi\text{ and }
\end{array}\]}

Firstly, suppose that $\M,s\vDash[\psi]\bullet\phi$ and $\M,s\vDash\psi$, to show $\M,s\vDash\bullet[\psi]\phi$. By supposition, $\M|_\psi,s\vDash\bullet\phi$. This means that $\M|_\psi,s\vDash\phi$ and there exists $t\in\M|_{\psi}$ such that $sRt$ and $\M|_\psi,t\nvDash\phi$. Then $\M,s\vDash[\psi]\phi$, and moreover, there exists $t\in S$ such that $sRt$ such that $\M,t\vDash\psi$ and $\M|_\psi,t\nvDash\phi$, which entails $\M,t\nvDash[\psi]\phi$. Therefore, $\M,s\vDash\bullet[\psi]\phi$.

Conversely, assume that $\M,s\vDash\psi\to\bullet[\psi]\phi$ and $\M,s\vDash\psi$ (i.e. $s\in\M|_\psi$), to show $\M|_\psi,s\vDash\bullet\phi$. By assumption, $\M,s\vDash\bullet[\psi]\phi$. Then $\M,s\vDash[\psi]\phi$ and there is a $t$ with $sRt$ such that $\M,t\nvDash[\psi]\phi$. From $\M,s\vDash\psi$ and $\M,s\vDash[\psi]\phi$, it follows that $\M|_\psi,s\vDash\phi$; from $\M,t\nvDash[\psi]\phi$, it follows that $\M,t\vDash\psi$ (i.e. $t\in\M|_\psi$) and $\M|_\psi,t\nvDash\phi$. We have thus shown that $\M|_\psi,s\vDash\phi$ and there exists $t\in\M|_\psi$ such that $sRt$ and $\M|_\psi,t\nvDash\phi$. Therefore, $\M|_\psi,s\vDash\bullet\phi$.

\medskip

The completeness of ${\bf K^{\nabla\bullet[\cdot]}}$ reduces to that of ${\bf K^{\nabla\bullet}}$, using the usual reduction method. 
\end{proof}

\subsection{Application: Successful and self-refuting formulas}

To say a formula $\phi$ is {\em successful}, if it still holds after being announced, in symbol $\vDash[\phi]\phi$. Otherwise, we say this formula is {\em unsuccessful}. Moreover, to say a formula is {\em self-refuting}, if its negation always holds after being announced, in symbol $\vDash[\phi]\neg\phi$. In this part, we will show, by syntactic methods, that Moore sentences are not only unsuccessful, but self-refuting, whereas their negations are all successful.

It has already been shown that Moore sentences are unsuccessful and self-refuting, but the proof perspectives are always semantics, that is, $\nvDash[\bullet p]\bullet p$ and $\vDash[\bullet p]\neg\bullet p$, see e.g.~\cite{jfak.fitch:2004,hvdetal.del:2007,hollidayetal:2010}. With the reduction axioms in hand, one may give a proof-theoretical perspective, in a relatively easy way.

\begin{proposition}\label{prop.bull1}
$[\bullet p]\neg\bullet p$ is provable in ${\bf K^{\bullet[\cdot]}}$.
\end{proposition}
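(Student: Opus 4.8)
The plan is to compute $[\bullet p]\neg\bullet p$ purely syntactically by pushing the announcement operator inward through the reduction axioms until only announcement-free formulas remain, and then recognising the result as a propositional tautology. Concretely, I would first apply axiom $\text{AN}$ to get $\vdash[\bullet p]\neg\bullet p\lra(\bullet p\to\neg[\bullet p]\bullet p)$. The work then reduces to analysing $[\bullet p]\bullet p$, for which axiom $\text{A}\bullet$ gives $\vdash[\bullet p]\bullet p\lra(\bullet p\to\bullet[\bullet p]p)$. Next I would reduce the inner $[\bullet p]p$ using $\text{AP}$: $\vdash[\bullet p]p\lra(\bullet p\to p)$. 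Since $\text{A1}$ gives $\vdash\bullet p\to p$, the formula $\bullet p\to p$ is a theorem, so by the replacement rules (R4 together with A0, or more simply by propositional reasoning inside the system) $\vdash[\bullet p]p\lra\top$, and hence $\vdash\bullet[\bullet p]p\lra\bullet\top$.

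Now the key observation is that $\vdash\neg\bullet\top$, i.e. $\vdash\circ\top$: this follows from rule R2 applied to the tautology $\top$, giving $\vdash\circ\top$, which is exactly $\vdash\neg\bullet\top$. Chaining the equivalences: $\vdash\bullet[\bullet p]p\lra\bullet\top$, and $\vdash\neg\bullet\top$, so $\vdash\neg\bullet[\bullet p]p$. Plugging back, $\vdash[\bullet p]\bullet p\lra(\bullet p\to\bullet[\bullet p]p)$ together with $\vdash\neg\bullet[\bullet p]p$ yields $\vdash[\bullet p]\bullet p\lra(\bullet p\to\bot)$, i.e. $\vdash[\bullet p]\bullet p\lra\neg\bullet p$ by propositional logic. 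Finally, substituting into the $\text{AN}$-equivalence: $\vdash[\bullet p]\neg\bullet p\lra(\bullet p\to\neg[\bullet p]\bullet p)\lra(\bullet p\to\neg\neg\bullet p)\lra(\bullet p\to\bullet p)$, which is a tautology (A0), so $\vdash[\bullet p]\neg\bullet p$.

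The only mild subtlety — and the place I would be most careful — is the management of substitution of provable equivalents inside the scope of $\bullet$ and $[\cdot]$. Replacement under $\bullet$ is licensed by rule R4 (from $\phi\lra\psi$ infer $\circ\phi\lra\circ\psi$, hence $\bullet\phi\lra\bullet\psi$), and replacement inside $[\cdot]$ is not needed if I always eliminate the outermost announcement first via the reduction axioms rather than trying to rewrite under an announcement. So the recommended order is strictly outside-in: $\text{AN}$, then $\text{A}\bullet$, then $\text{AP}$, collapsing $[\bullet p]p$ to $\top$ only after it is announcement-free and sits at the bottom. With that discipline every step is an instance of a stated axiom, an application of R2 or R4, or propositional reasoning via A0 and MP, and no semantic argument is invoked — which is precisely the proof-theoretic perspective the paper is advertising.

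Overall this is a routine unwinding; there is no real obstacle, only bookkeeping. If one wanted to be fully explicit, the single line requiring a tiny lemma is $\vdash\neg\bullet\top$, obtained from R2 on $\top$; everything else is mechanical application of the reduction axioms AN, A$\bullet$, AP in outside-in order followed by A1 and propositional closure.
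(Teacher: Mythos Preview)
Your proof is correct and follows essentially the same route as the paper: apply AN, then A$\bullet$, then AP outside-in, and finish by observing via A1 and R2 that the residual formula is a theorem. The only cosmetic difference is that the paper applies R2 directly to the theorem $\bullet p\to p$ to obtain $\circ(\bullet p\to p)$, whereas you first collapse $\bullet p\to p$ to $\top$ and then apply R2 to $\top$ together with R4; this detour is harmless but unnecessary.
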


\begin{proof}
We just need to see the following proof sequences:
\[
\begin{array}{llll}
[\bullet p]\neg\bullet p&\lra&(\bullet p\to\neg[\bullet p]\bullet p)&\text{AN}\\
&\lra&(\bullet p\to\neg(\bullet p\to\bullet[\bullet p]p))&\text{A}\bullet\\
&\lra&(\bullet p\to\neg(\bullet p\to\bullet(\bullet p\to p)))&\text{AP}\\
&\lra&(\bullet p\to \bullet p\land \neg\bullet (\bullet p\to p))&\text{A0}\\
&\lra&(\bullet p\to\neg\bullet (\bullet p\to p))&\text{A0}\\
\end{array}
\]

Moreover, $\bullet p\to\neg\bullet (\bullet p\to p)$ is provable in ${\bf K^{\bullet[\cdot]}}$. This is because by axiom A1, $\vdash\bullet p\to p$, then applying R2 and Def.~$\circ$, we obtain that $\vdash\neg\bullet(\bullet p\to p)$.
\end{proof}

On the other hand, unlike Moore sentences, their negations are all successful formulas.
\begin{proposition}\label{prop.bull2}
$[\neg\bullet p]\neg\bullet p$ is provable in ${\bf K^{\bullet[\cdot]}}$.
\end{proposition}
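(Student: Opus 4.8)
The plan is to mimic the proof of Proposition~\ref{prop.bull1}, pushing $[\neg\bullet p]\neg\bullet p$ through the reduction axioms until we reach a purely Boolean/modal formula that we can recognize as a theorem of ${\bf K^{\bullet[\cdot]}}$. First I would apply AN to obtain $[\neg\bullet p]\neg\bullet p \lra (\neg\bullet p \to \neg[\neg\bullet p]\bullet p)$. Then I would use A$\bullet$ to rewrite $[\neg\bullet p]\bullet p$ as $\neg\bullet p \to \bullet[\neg\bullet p]p$, and AP to rewrite the inner $[\neg\bullet p]p$ as $\neg\bullet p \to p$. Collecting these, propositional reasoning (A0) should reduce the whole thing to something of the shape $\neg\bullet p \to \bullet(\neg\bullet p \to p)$ — or, after simplifying the antecedent/consequent, to $\neg\bullet p \to \bullet(\bullet p \vee p)$, which is equivalent to $\neg\bullet p \to \bullet(p\vee\bullet p)$.

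The remaining task is then the genuinely logical step: showing $\vdash \neg\bullet p \to \bullet(\neg\bullet p\to p)$ (equivalently $\vdash \neg\bullet p \to \bullet(p \vee \bullet p)$) in ${\bf K^{\bullet[\cdot]}}$. Here I expect to argue as follows. Note $\neg\bullet p$ is $\circ p$, so $\circ p \land p$ gives, by A1 together with A3 (in the equivalent form $\circ\phi\land\phi\to\circ(\psi\to\phi)$ noted after the axioms), that $\circ p\land p\to\circ(\neg\bullet p\to p)$; and when $\neg p$ holds, $\neg\bullet p\to p$ reduces to $\neg\bullet p\to \bot$, i.e. to $\bullet p$ — hmm, this needs more care. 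The cleaner route: since $\vdash \bullet p\to p$ (A1), we have $\vdash p\vee\bullet p \lra p$ modulo... no — rather, observe $\neg\bullet p \vdash (p\vee\bullet p)\lra p$ is false in general. So instead I would directly verify semantically-then-syntactically that $\circ p$ forces $\bullet(\neg\bullet p\to p)$: if $\M,s\vDash\circ p$ then either $s\nvDash p$, in which case $s\nvDash(\neg\bullet p\to p)$ so $s\nvDash\bullet(\neg\bullet p\to p)$ — so the implication would \emph{fail}! This tells me the reduced formula must actually simplify differently, and the correct normal form is $\neg\bullet p\to\neg\bullet(\neg\bullet p\to p)$ or similar; I would recompute the A0-step carefully, tracking that $\neg[\neg\bullet p]\bullet p$ after full reduction becomes $\neg\bullet p\land\neg\bullet(\neg\bullet p\to p)$, so that the target is $\vdash\neg\bullet p\to(\neg\bullet p\land\neg\bullet(\neg\bullet p\to p))$, i.e.\ $\vdash\neg\bullet p\to\neg\bullet(\neg\bullet p\to p)$, i.e.\ $\vdash\circ p\to\circ(\circ p\to p)$.

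Granting that, the final step is clean: from $\vdash\bullet p\to p$ (A1) by contraposition $\vdash \neg p\to\circ p$... no — rather, I want $\vdash \circ p\to\circ(\circ p\to p)$. Using A3 in the form $\circ\phi\land\phi\to\circ(\psi\to\phi)$ with $\phi:=p$, $\psi:=\circ p$ gives $\circ p\land p\to\circ(\circ p\to p)$; and in the case $\neg p$, the formula $\circ p\to p$ is propositionally equivalent (given $\neg p$) to $\neg\circ p$, i.e.\ $\bullet p$, and $\vdash\circ(\bullet p)$ by R2 applied to $\vdash\bullet p\to p$... again this needs the case split to be done inside the proof system, which is routine by A0 plus R4 (replacement under $\circ$). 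So: case $p$ handled by A3; case $\neg p$ handled by replacing $\circ p\to p$ with $\bullet p$ under R4 and using $\vdash\circ\bullet p$ from R2 on A1; combine the two cases by A0. The main obstacle, as the above shows, is purely bookkeeping: getting the iterated application of AN, A$\bullet$, AP and the propositional simplification exactly right so that the correct ${\bf K^{\bullet[\cdot]}}$-theorem falls out — the modal content itself is a short argument from A1, A3, R2, R4.
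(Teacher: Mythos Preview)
Your reduction-axiom computation (after the self-correction) and the overall case-split on $p$ versus $\neg p$ match the paper exactly; the paper also reduces to $\bullet(\neg\bullet p\to p)\to\bullet p$ (the contrapositive of your $\circ p\to\circ(\circ p\to p)$) and splits on $p$. Your case $p$ via the dual form of A3 is fine.

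The case $\neg p$, however, has a genuine gap. Two things go wrong. First, R4 requires $\vdash\phi\lra\psi$, but the equivalence $(\circ p\to p)\lra\bullet p$ holds only \emph{under the hypothesis $\neg p$}, not as a theorem, so you cannot invoke R4 to replace $\circ p\to p$ by $\bullet p$ inside $\circ(\cdot)$. Second, even if you could, R2 applied to $\vdash\bullet p\to p$ yields $\vdash\circ(\bullet p\to p)$, not $\vdash\circ\bullet p$; and $\circ\bullet p$ is simply not a theorem of ${\bf K^{\bullet[\cdot]}}$ (it fails at any world where $\bullet p$ holds but some successor has $\circ p$).

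The paper handles this case much more directly, working with the contrapositive form $\bullet(\neg\bullet p\to p)\to\bullet p$: from A1, $\bullet(\neg\bullet p\to p)\to(\neg\bullet p\to p)$; combined with $\neg p$ this gives $\bullet p$ by pure A0. If you prefer to stay in your $\circ$-formulation, the correct argument is: by A1 contraposed, $\vdash\neg p\to\circ p$, hence $\vdash\neg p\to\neg(\circ p\to p)$; and again by A1 contraposed (instantiated with $\phi:=\circ p\to p$), $\vdash\neg(\circ p\to p)\to\circ(\circ p\to p)$. Chaining these gives $\vdash\neg p\to\circ(\circ p\to p)$, and no R4 trickery is needed.
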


\begin{proof}
We observe the following proof sequences:
\[
\begin{array}{llll}
[\neg\bullet p]\neg\bullet p&\lra&(\neg\bullet p\to\neg[\neg\bullet p]\bullet p)&\text{AN}\\
&\lra&(\neg\bullet p\to\neg(\neg\bullet p\to\bullet[\neg\bullet p]p))&\text{A}\bullet\\
&\lra&(\neg\bullet p\to\neg(\neg\bullet p\to\bullet(\neg\bullet p\to p)))&\text{AP}\\
&\lra&(\neg\bullet p\to \neg\bullet p\land \neg\bullet (\neg\bullet p\to p))&\text{A0}\\
&\lra&(\neg\bullet p\to\neg\bullet (\neg\bullet p\to p))&\text{A0}\\
&\lra&(\bullet (\neg\bullet p\to p)\to\bullet p)&\text{A0}\\
\end{array}
\]

The right-hand side of the last equivalence is provable in ${\bf K^{\bullet[\cdot]}}$, as follows.
By axiom A3, we have $\vdash \bullet (\neg\bullet p\to p)\land p\to\bullet p$. Moreover, $\vdash \bullet (\neg\bullet p\to p)\land\neg p\to\bullet p$, as $\vdash\bullet (\neg\bullet p\to p)\to (\neg\bullet p\to p)$ (by axiom A1) and $\vdash \neg p\land (\neg\bullet p\to p)\to\bullet p$ (by axiom A0). Therefore, $\vdash \bullet (\neg\bullet p\to p)\to\bullet p$.
\end{proof}

If we define an operator $[?\psi]$, called `announcement whether $\psi$' in~\cite{ditmarschfan:2015}, such that $[?\psi]\phi\lra[\psi]\phi\land[\neg\psi]\phi$, then by Props.~\ref{prop.bull1} and \ref{prop.bull2}, we have that $[?\bullet p]\neg\bullet p$ is provable, which says that whenever announcing whether $p$ is an unknown truth, $p$ is {\em not} an unknown truth any more.

\weg{By Proposition \ref{prop.redu}, we obtain the following general result.
\begin{proposition}\label{prop.redu.ann} For all $n>0$ and $n\in\mathbb{N}$,
$[\bullet^np]\neg\bullet^np$ and $[\neg\bullet^np]\neg\bullet^np$ are both provable in $\ALA$.
\end{proposition}

A fragment characterizing the formulas $\phi$ such that $\vDash[\bullet^n\phi]\neg\bullet^n\phi$, i.e. $\bullet^n\phi$ is self-refuting, ($\vDash[\neg\bullet^n\phi]\neg\bullet^n\phi$, i.e. $\neg\bullet^n\phi$ is successful, resp.).

A fragment characterizing the formulas $\phi$ such that $\vdash[\bullet\phi]\neg\bullet\phi$, i.e. $\bullet\phi$ is self-refuting, ($\vdash[\neg\bullet\phi]\neg\bullet\phi$, i.e. $\neg\bullet\phi$ is successful, resp.).

$$\phi::=p\mid \neg p\mid \bullet\phi$$}

\section{Concluding words}

In this paper, we proposed a logic $\mathcal{L}(\nabla,\bullet)$ of contingency and accident, which combines the notions of contingency and accident together. We compared the relative expressive powers of this logic and other related logics. We proved that the property of transitivity is definable in terms of a complex formula involving both contingency operator and accident operator, while seriality, reflexivity, Euclideanity and convergency are all undefinable in $\mathcal{L}(\nabla,\bullet)$, by introducing a notion of `mirror reduction'; in contrast, the undefinability results cannot be solved using notions of `R-reduction' and `mirror reduction' in the literature. With the help of a schema, we gave complete axiomatizations of $\mathcal{L}(\nabla,\bullet)$ over $\mathcal{K}$-frames, $\mathcal{D}$-frames, $4$-frames, $\mathcal{T}$-frames. We also investigate a dynamic extension of $\mathcal{L}(\nabla,\bullet)$ and present a complete axiomatization for this logic, which can be applied to prove syntactically that Moore sentences are self-refuting and negations of Moore sentences are successful.

There are a lot of work to be continued, such as axiomatizations of $\mathcal{L}(\nabla,\bullet)$ over symmetric frames and over Euclidean frames, the suitable notion of bisimulation for $\mathcal{L}(\nabla,\bullet)$ and corresponding van Benthem characterization theorem.

\bibliographystyle{plain}
\bibliography{biblio2017,biblio2015}

\begin{thebibliography}{10}

\bibitem{AD:1989A}
R.~Ariew, D.~Garber (Eds., and Trans.).
\newblock {\em G.~W.~Leibniz: Philosophical Essays}.
\newblock Indianapolis: Hackett Publishing Company, 1989.

\bibitem{Aristotle:1941}
Aristotle.
\newblock {De Interpretatione (On Interpretation)}.
\newblock In R.~McKeon, editor, {\em The Basic Works of Aristotle}. Random
  House, New York, 1941.

\bibitem{Borgan67}
A.~Brogan.
\newblock Aristotle's logic of statements about contingency.
\newblock {\em Mind}, 76(301):49--61, 1967.

\bibitem{Cresswell88}
M.~Cresswell.
\newblock Necessity and contingency.
\newblock {\em Studia Logica}, 47:145--149, 1988.

\bibitem{Fan:2017}
J.~Fan.
\newblock Strong non-contingency: on the modal logics of an operator
  expressively weaker than necessity.
\newblock Accepted by {\em Notre Dame Journal of Formal Logic}.

\bibitem{Fan:2015accident}
J.~Fan.
\newblock Logics of essence and accident.
\newblock arXiv preprint arXiv:1506.01872, unpublished manuscript, 2015.

\bibitem{Fanetal:2014}
J.~Fan, Y.~Wang, and H.~van Ditmarsch.
\newblock Almost necessary.
\newblock In {\em Advances in Modal Logic}, volume~10, pages 178--196, 2014.

\bibitem{Fanetal:2015}
J.~Fan, Y.~Wang, and H.~van Ditmarsch.
\newblock Contingency and knowing whether.
\newblock {\em The Review of Symbolic Logic}, 8(1):75--107, 2015.

\bibitem{Fine:2017}
K.~Fine.
\newblock Ignorance of ignorance.
\newblock {\em Synthese}, pages 1--15, 2017.

\bibitem{fitch:1963}
F.B. Fitch.
\newblock A logical analysis of some value concepts.
\newblock {\em The Journal of Symbolic Logic}, 28(2):135--142, 1963.

\bibitem{FH:2009}
R.~French and L.~Humberstone.
\newblock Partial confirmation of a conjecture on the boxdot translation in
  modal logic.
\newblock {\em The Australasian Journal of Logic}, 7:56--61, 2009.

\bibitem{GilbertVenturi:2016}
D.~R. Gilbert and G.~Venturi.
\newblock Reflexive-insensitive modal logics.
\newblock {\em The Review of Symbolic Logic}, 9(1):167--180, 2016.

\bibitem{Heinemann:1948}
F.~H. Heinemann.
\newblock Truths of reason and truths of fact.
\newblock {\em The Philosophical Review}, 57(5):458--480, 1948.

\bibitem{hintikka:1962}
J.~Hintikka.
\newblock {\em Knowledge and Belief}.
\newblock Cornell University Press, Ithaca, NY, 1962.

\bibitem{hollidayetal:2010}
W.~Holliday and T.~Icard.
\newblock Moorean phenomena in epistemic logic.
\newblock In L.~Beklemishev, V.~Goranko, and V.~Shehtman, editors, {\em
  Advances in Modal Logic 8}, pages 178--199. College Publications, 2010.

\bibitem{Humberstone95}
L.~Humberstone.
\newblock The logic of non-contingency.
\newblock {\em Notre Dame Journal of Formal Logic}, 36(2):214--229, 1995.

\bibitem{DBLP:journals/ndjfl/Kuhn95}
S.~Kuhn.
\newblock Minimal non-contingency logic.
\newblock {\em Notre Dame Journal of Formal Logic}, 36(2):230--234, 1995.

\bibitem{Marcos:2005}
J.~Marcos.
\newblock Logics of essence and accident.
\newblock {\em Bulletin of the Section of Logic}, 34(1):43--56, 2005.

\bibitem{MR66}
H.~Montgomery and R.~Routley.
\newblock Contingency and non-contingency bases for normal modal logics.
\newblock {\em Logique et Analyse}, 9:318--328, 1966.

\bibitem{Moore:1942}
G.E. Moore.
\newblock A reply to my critics.
\newblock In P.A. Schilpp, editor, {\em The Philosophy of G.E. Moore}, pages
  535--677. Northwestern University, Evanston IL, 1942.
\newblock The Library of Living Philosophers (volume 4).

\bibitem{Small:2001}
C.~G. Small.
\newblock Reflections on {G}\"odel's ontological argument.
\newblock In W.~Deppert and M.~Rahnfeld, editors, {\em Klarheit in
  Religionsdingen: Aktuelle Beitr\"age zur Religionsphilosophie}, volume Band
  III of {\em Grundlagenprobleme unserer Zeit}, pages 109--144. Leipziger
  Universit\"atsverlag, 2001.

\bibitem{Steinsvold:2008}
C.~Steinsvold.
\newblock A note on logics of ignorance and borders.
\newblock {\em Notre Dame J.\ Formal Logic}, 49(4):385--392, 2008.

\bibitem{steinsvold:2008b}
C.~Steinsvold.
\newblock A note on logics of ignorance and borders.
\newblock {\em Notre Dame Journal of Formal Logic}, 49(4):385--392, 2008.

\bibitem{Steinsvold:2011}
C.~Steinsvold.
\newblock The boxdot conjecture and the language of essence and accident.
\newblock {\em Australasian Journal of Logic}, 10:18--35, 2011.

\bibitem{jfak.fitch:2004}
J.~van Benthem.
\newblock What one may come to know.
\newblock {\em Analysis}, 64(2):95--105, 2004.

\bibitem{jfak.book:2011}
J.~van Benthem.
\newblock {\em Logical Dynamics of Information and Interaction}.
\newblock Cambridge University Press, 2011.

\bibitem{ditmarschfan:2015}
H.~van Ditmarsch and J.~Fan.
\newblock Propositional quantification in logics of contingency.
\newblock {\em Journal of Applied Non-Classical Logics}, 2016.

\bibitem{hvdetal.del:2007}
H.~van Ditmarsch, W.~van~der Hoek, and B.~Kooi.
\newblock {\em Dynamic Epistemic Logic}, volume 337 of {\em Synthese Library}.
\newblock Springer, 2007.

\bibitem{DBLP:journals/ndjfl/Zolin99}
E.~Zolin.
\newblock Completeness and definability in the logic of noncontingency.
\newblock {\em Notre Dame Journal of Formal Logic}, 40(4):533--547, 1999.

\end{thebibliography}

\end{document}